\title{Deconvolution with Unknown Error Distribution\\Interpreted as Blind Isotonic Regression
}
\author{
	Devavrat Shah\\
	\texttt{devavrat@mit.edu}
	\and
	Dogyoon Song\\
	\texttt{dgsong@mit.edu}
}
\date{}
\begin{document}

\maketitle

\begin{abstract}
Deconvolution is a statistical inverse problem to estimate the distribution of a random variable 
based on its noisy observations. Despite the extensive studies on the topic, deconvolution with 
unknown noise distribution remains as a notoriously hard problem. We propose a matrix-based 
viewpoint for collective deconvolution that subsumes the setup with repeated measurements as 
a special case. As the main result, we describe a simple algorithm that partially utilizes matrix 
structure to solve deconvolution problem and provide non-asymptotic error analysis for the algorithm. 
We show that the proposed algorithm achieves the minimax optimal rate for deconvolution in 
a restricted sense. We also remark the connection between the collective deconvolution and 
the so-called statistical seriation as a byproduct or our matrix viewpoint. We conjecture that the link 
suggests that collective deconvolution, as well as deconvolution with repeated measurements, 
is intrinsically much easier than usual deconvolution of a single distribution.
\end{abstract}


\tableofcontents


%


\section{Introduction}

Deconvolution is a statistical inverse problem to estimate the distribution of the underlying signal random variable $X$, 
based on the observations $\{ Z_1, \ldots, Z_n\}$ where $Z = T(X)$ for some transformation $T$. For example, $T(X) = X + N$ 
with $N$ denoting additive noise, when $Z$ represents the noisy measurement of $X$. When $X$ and $N$ are independent 
and admit densities, the density of $Z$ is given as the convolution $f_Z = f_X * f_N$. Assuming a priori knowledge of $f_N$ 
(equivalently, of $T$), one can solve the convolution equation (i.e., `deconvolve') with the empirical distribution of $Z$ to estimate $f_X$.

There is a vast literature on theory and applications of deconvolution, spanning form the early works on reflection seismology and 
optical imaging to studies on the optimal rates of deconvolution estimators. Under the common assumption of a priori knowledge on $T$, 
kernel deconvolution estimators have been widely studied to estimate the unknown density/distribution and they are known to achieve 
the minimax optimal rate \cite{Fan1991, tsybakov2009springer}. In particular, the optimal rates are determined by the smoothness class 
of the signal distribution and the noise densities.

Despite the extensive studies, the requirement of knowing $T$ remains as a major challenge in deconvolution. There have been 
various approaches proposed to overcome the difficulty, suggesting to exploit some types of side information to estimate $T$ first 
and then solve the usual deconvolution problem with estimated $\hat{T}$. For instance, \cite{Delaigle2008} consider the setup where 
one can measure the same entity multiple times and propose to utilize the repeated measurements to estimate the noise distribution.

In this paper, we study a matrix-based viewpoint for the deconvolution problem. Specifically, we consider the setup where there are 
$m$ signal random variables $X_1, \ldots, X_m$ of interest and we want to estimate the $m$ distributions simultaneously from a dataset 
that captures certain `commonality' in the distributions. Our framework subsumes the setup with repeated measurements as a special case 
where $X_i = X$ for all $i = 1, \ldots, m$. 

We summarize our contribution in this paper as follows. First, we propose a two-step algorithm for deconvolution (and matrix estimation) 
and provide a non-asymptotic error analysis for the algorithm that matches the optimal rate for deconvolution of a single distribution. 
Second, we point out the potential connection between deconvolution with repeated measurements to arguably much easier statistical 
problems, namely, the statistical seriation and the isotonic regression with latent features. The latter observation suggests the possibility of 
achieving an exponentially faster rate than the minimax optimal rate for deconvolution (which is logarithmic), hinting that deconvolution 
with repeated measurements is intrinsically much easier than usual deconvolution.

\subsection{Our Contribution}

As the main contribution of this work, we present a matrix-based viewpoint for deconvolution that enables robust extension of the works 
by \citet{Fan1991} and \citet{Delaigle2008} as noted earlier. To be precise, we let $A \in \Reals^{m \times n}$ denote the matrix we want to 
estimate and assume the latent variable model as the generative model for the matrix $A$, which is to be described in Section \ref{sec:model}. 
In addition, we assume certain `commonality' across the rows of $A$; we assume there exists a permutation of columns that rearranges entries 
in every row of $A$ to be monotone nondecreasing. Assuming the latent variable model, we reformulate the problem of estimating the distributions 
of $m$ signal random variables $X_1, \ldots, X_m$ as the problem of estimating the latent function associated with a matrix from its partial, noisy measurement.

Based on the proposed viewpoint, we describe an algorithm to estimate the distributions of $X_1, \ldots, X_m$ in the course of estimating the matrix $A$. 
The described algorithm operates in the following steps: (i) it estimates the column permutation utilizing the `commonality' (shared monotonicity as in 
\eqref{eqn:biLipschitz}) across the rows; (ii) it estimates the noise distribution using the estimated proximity between columns; (iii) it estimates the 
latent function for each row by modified kernel deconvolution estimator; and lastly, (iv) it estimates the matrix by plugging in the estimated permutation and 
the estimated latent function. We progressively develop the algorithm starting from the simplest noiseless setting in Section \ref{sec:alg_noiseless} 
to noisy measurement setup with known noise distribution in Section \ref{sec:alg_noisy_known} and then to the generic noisy measurement scenario 
with unknown noise distribution in Section \ref{sec:alg_noisy_unknown}. The fully developed algorithm is presented in Algorithm \ref{alg:noisy_unknown} 
with a subroutine for the noise estimation in Algorithm \ref{alg:setT_main}.

We provide non-asymptotic error analysis for the proposed algorithm in terms of two error metrics: (i) max row-$\ell^2$ norm and (ii) matrix maximum norm. 
Both are more stringent error metrics compared to the traditional Frobenius norm (i.e., mean squared error). We provide upper bounds on the error 
of our proposed algorithm using both error metrics; see Corollary \ref{coro:answer.1} and Corollary \ref{coro:answer.2}, respectively.

Note that the max row-$\ell^2$ norm error is closely related to the maximum (taken over the $m$ distributions) deconvolution error in the $L^2$ norm sense, 
i.e., (square root of) mean squared error of deconvolution. We discuss information-theoretic lower bounds on the squared $L^2$ error to argue the optimality 
of the obtained upper bounds; see Corollary \ref{coro:noiseless} for the lower bound for function approximation without noise and deconvolution, and see 
Corollary \ref{coro:lower_noisy} for the lower bound for deconvolution. Both corollaries are derived based on classical hardness results from function approximation 
and deconvolution literature.

Last but not least, we comment on the connection between the collective deconvolution considered in the current work and the problem of statistical seriation 
as a by-product of the matrix-based viewpoint toward deconvolution. Seriation is the problem of finding a permutation to rearrange the matrix entries to satisfy 
certain shape constraints, e.g., monotonicity and very recently, a statistical model for seriation is studied in \cite{flammarion2019optimal}. They also show that 
the least square estimator achieves the optimal rate for statistical seriation in terms of squared Frobenius norm error, which scales as $\big( \frac{\log n}{n} \big)^{2/3}$. 
In spite of the difference in the estimation objectives and the model assumptions, their results suggest the possibility of achieving exponentially faster rate for 
deconvolution by estimating the matrix first and interpreting it as the empirical distribution. Further discussion can be found in Section \ref{sec:discussion}.

We summarize the upper and lower bounds for the estimation error $\bbE_{Z}~ \sup_{i \in [m]} \| \hat{F}_i - F \|_{L^2}^2$ under three noise scenarios in 
Table \ref{tab:summary}. Observe that the proposed algorithm is optimal among the estimators that estimates $F_i$ based only on the information from row $i$. 
We also include the conjectured improved rates that are expected to be achievable by `collaborative' estimators in the last row of the table.
\begin{table}
\centering
\caption{Summary of the results and conjectured rates for estimating $\bbE_{Z}~ \sup_{i \in [m]} \| \hat{F}_i - F \|_{L^2}^2$.}
\label{tab:summary}
\begin{tabular}{l c c c}
	\toprule
					&	Noiseless	& Known Noise	&Unknown Noise\\
	\midrule
		Upper Bound	&	$\cO\Big( \frac{\log(mnp)}{np} \Big)$	& $\cO\Big( \big(\log (np) \big)^{-\frac{2}{\beta}} \Big)$	& $\cO\Big( \big(\log (np) \big)^{-\frac{2}{\beta}} \Big)$	\\
					&	(Corollary \ref{coro:noiseless})		&	(Corollary \ref{coro:lower_noisy})				&	(Corollary \ref{coro:answer.1})	\\
	\midrule
		Lower Bound	&	$\Omega\Big( \frac{1}{np}\Big)$		& $\Omega\Big( \big(\log (np) \big)^{-\frac{2}{\beta}} \Big)$	&	\\
		(non-collaborative)	&	(Corollary \ref{coro:noiseless})		&	(Corollary \ref{coro:lower_noisy})				&	\\
	\midrule
		Conjecture	&	& $\cO\Big( \big( \frac{\log (np)}{np} \big)^{\frac{2}{3}} \Big)$	& $\cO\Big( \big( \frac{\log (np)}{np} \big)^{\frac{2}{3}} \Big)$	\\
	\bottomrule
\end{tabular}
\end{table}

\subsection{Related Work}
\subsubsection{Deconvolution}
Early works in deconvolution literature focus on addressing how to estimate the signal density assuming a specific form of noise distribution distribution and 
computing the rates of convergence for the proposed methods. These early works include \cite{mendelsohn1982deconvolution, Carroll1988, devroye1989consistent, 
Stefanski1990, stefanski1990rates, fan1993adaptively} to name a few. Among the vast amount of literature, 
\cite{Fan1991} discusses how the dispersion characteristic of the noise influence the difficulty of the deconvolution problem by introducing the notion of 
ordinary smooth- and supersmooth- noise, thereby providing insights on the hardness of nonparametric deconvolution.

Subsequently, the harder problem of density estimation with unknown error density has been considered. The usual proposal was to estimate the error density 
from side information such as the samples of the error itself \cite{johannes2009deconvolution}. In particular, the setup with replicated measurements\footnote{That is 
to say, the observer is allowed to measure the same signal with independent measurement error multiple times.} for each inherently different samples drew much 
attention \cite{diggle1993fourier, neumann1997effect}, for example. \cite{Delaigle2008} argues that a modified kernel deconvolution estimator using the estimated 
error density achieves the same first order property as the original kernel deconvolution estimator considered in \cite{Carroll1988, Fan1991}.

In this paper, we restrict ourselves to supersmooth noise and `nice' distribution functions and thus we are able to estimate distribution and quantile function of 
the signal from the estimated density using `plug-in' estimator, as discussed in \cite{Fan1991}. However, estimation of distributions, moments, quantiles, etc. can be 
more complicated in general and does not follow as an immediate consequence of density estimation \cite{hall2008estimation, dattner2011deconvolution, dattner2016adaptive}.


\subsubsection{Isotonic Regression}

Our work also has a similar flavor with so-called isotonic regression, whose goal is in estimating an unknown function under a shape constraint. 
Isotonic regression is a classical topic in the field of nonparametric statistics and has drawn many researchers' interests on its own. In the simplest form, 
one assumes the response variables $Y_i$ and covariates $X_i$ satisfy $Y_i = f(X_i) + N_i$, $1 \leq i \leq n$ for some nondecreasing regression function $f$, 
where $N_i$'s are i.i.d. noises. The objective is in estimating a nondecreasing function $\hat{f}_n$ that minimizes the average loss at design points. 
Since the least squares type methods for isotonic estimation were proposed by \cite{ayer1955empirical, vaneeden1956maximum, grenander1956theory}, 
there has been an extensive study to develop algorithms and analyze the risk bounds. In early works, the convergence in distribution at a fixed point with 
the rate no slower than $n^{-1/3}$ was established \cite{rao1969estimation, brunk1969estimation}. In subsequent works, the same $n^{-1/3}$-rate for 
the convergence in probability was achieved for the least square estimator under the sub-Gaussian noise assumption \cite{van1990estimating, van1993hellinger}. 
Then Donoho obtained the $n^{-1/3}$ upper bound on the mean squared error ($L^2$ risk) for i.i.d. Gaussian noise \cite{donoho1990gelfand}, and 
this i.i.d. Gaussian assumption is weakened to the finiteness of some exponential moment by Birg\'e \cite{birge1993rates}. In more recent works, other types of 
risk bounds and techniques have been studied, e.g., Stein's method for mean squared error \cite{meyer2000degrees} and general $l^p$ risk based on martingale 
method \cite{zhang2002risk}. We refer interested readers to \cite{barlow1972statistical, barlow1972isotonic, grenander1981abstract, robertson1988order, groeneboom2012information} for a more general discussion on statistical methods with order restrictions.

If we treat the measurements in a single row of the matrix as the covariate, the connection to isotonic regression is evident as distribution function is always 
nondecreasing. However, there is a significant difference that covariates are corrupted with noise in our setup. This already sets a major obstacle in applying 
pooling algorithms (which is the zero-th order local smoothing) to our setup, which are widely studied in the isotonic regression literature. 

\subsubsection{Matrix Estimation, Latent Variable Model, and Statistical Seriation}
\paragraph{Matrix Estimation}
Our problem of interest is closely related to, but goes beyond matrix estimation -- our objective is not only to recover the matrix, but estimate 
the distributions of the signal random variable associated with the matrix. In the last fifteen years, there have been a huge amount of advances 
in the matrix estimation, especially in spectral approaches and convex optimization based approaches. Since \cite{srebro2004generalization} 
suggested to use low-rank matrix approximation in this context, many statistically efficient estimators based on optimization have been suggested. 
They prove that $rn \log n$ samples out of $n^2$ entries suffice to impute the missing entries by matrix factorization, where $r$ is rank of the 
matrix to recover \cite{candes2009exact, candes2010power, rohde2011estimation, keshavan56matrix, koltchinskii2011nuclear, negahban2012restricted, jain2013low}.

However, many of these approaches require that the matrix is of low rank ($r \ll n$) to achieve a sensible sample complexity. Note that we consider 
a matrix of common monotonicity pattern and such a matrix can have high rank even though it has certain shape constraints.

\paragraph{Latent Variable Model}
Latent variable model is a more general model than the low-rank matrix model and it subsumes the low rank model as a special case -- let the latent features 
be $r$ dimensional vectors and the latent function be their inner product. Chatterjee proposed the universal singular value thresholding (USVT) estimator 
inspired by low-rank matrix approximation and he argued that the USVT estimator provides an accurate estimate for any Lipschitz function under the 
latent variable model \cite{Chatterjee15}. However, his analysis is based on step function approximation (stochastic block model approximation) and 
$\Omega\big( n^{2 - \frac{2}{r+2}} \big)$ observations out of $n^2$ are required to obtain a consistent estimate for an $n \times n$ matrix, where $r$ stands 
for the dimension of the latent spaces.The rate of the USVT estimator is further investigated in a more recent work by \citet{xu2017rates}.

In contrast, \cite{LLSS2016} suggested a similarity-based estimator for collaborative filtering and they proved that their estimator requires 
$\Omega\big( n^{\frac{3}{2} + \delta} \big)$ for any small $\delta >0$ out of $n^2$ for consistency of the estimator, as long as $r = o \left( \log n \right)$. 
As the name `blind regression' suggests, their estimator is effectively a kernel regression estimator defined on the latent feature space with 
a surrogate metric defined by behavioral pattern of the function values. They report that the overlap requirement between pairs of rows, namely $np^2 \gg 1$, 
determines the sample complexity of the estimator, which is a commonly observed phenomenon in neighbor-based approaches.

We may view the algorithm proposed in this paper as a `blind isotonic regression' estimator when viewing it as a method for matrix estimation.
The suggested algorithm can avoid this restrictive overlap requirement by assuming shared monotonicity property.

\paragraph{Statistical Seriation}
Seriation is the problem of finding a permutation to rearrange the matrix entries to satisfy certain shape constraints, e.g., monotonicity. In a recent work, 
a statistical model for seriation is proposed and the optimal rate for estimation is studied \cite{flammarion2019optimal}. The authors consider the setup 
where they observe $Z = A \Pi + N$ where $A \in \Reals^{m \times n}$ is assumed to belong to a class of matrices that satisfy certain shape constraints, 
$\Pi \in \Reals^{n \times n}$ is an unknown permutation matrix, and $N \in \Reals^{m \times n}$ denotes the noise. The goal is to estimate the product 
$A \Pi$. They show that the least square estimator achieves the optimal rate for statistical seriation in terms of squared Frobenius norm error, 
which scales as $\big( \frac{\log n}{n} \big)^{2/3}$ and also propose a computationally efficient two-step estimator that first estimates $\Pi$ in a similar 
procedure as ours and then estimate $A$ with the least squares. 

We note that their estimation objectives and model assumptions are similar to ours but slightly different. First, we do not assume full observation of $Z$ 
but allow for a partial observation. Second, we measure the error in max $\ell_2$ norm sense (or in matrix maximum norm sense), which is a more 
stringent error metric than the Frobenius norm. Lastly, we want to estimate the underlying distributions beyond estimating the values in the instantiated matrix. 
Due to the differences, we cannot directly utilize their results in our problem but their results suggest the possibility of achieving exponentially faster rate for 
deconvolution with repeated measurements by estimating the matrix first and interpreting it as the empirical distribution. Further discussion can be found 
in Section \ref{sec:discussion}.

\section{Problem Setup}\label{sec:setup}

In this section, we formally state our model and the problem of interest.

\subsection{Model: the Latent Variable Model}\label{sec:model}
Suppose that there is a matrix $A \in \Reals^{m \times n}$ we want to estimate. 
We assume the following generative model for $A$; there exist latent features $\frow{i}, \fcol{j} \in [0,1] \subset \Reals$ 
for each $i \in [m], j \in [n]$ and a latent function $g: [0,1] \times [0,1] \to \Reals$ such that 
\begin{equation}\label{eqn:latent_ftn}
	A(i,j) = g\big( \frow{i}, \fcol{j}\big).
\end{equation}
We assume the latent features are independent and identically distributed as per some (unknown) latent distribution $\cD_{\textrm{row}}, \cD_{\textrm{col}}$, i.e., $\frow{i} \sim \cD_{\textrm{row}}$ and $\fcol{j} \sim \cD_{\textrm{col}}$.

Note that there always exists such a latent model representation for exchangeable data and we may assume $\cD_{\textrm{row}}$, 
$\cD_{\textrm{col}}$ are the uniform distribution over $[0,1]$ with $g$ being some measurable function according to the celebrated 
Aldous-Hoover theorem \citet{Aldous81, Hoover82}. From now on, we let both $\cD_{\textrm{row}}$ and $\cD_{\textrm{col}}$ 
be the uniform distribution on $[0,1]$.

Our objective is to estimate\footnote{See Section \ref{sec:assumption_function} for the precise meaning of estimation of $g$.} 
the latent function $g$ from an incomplete and noisy measurement of $A$. Here we describe our model assumptions on 
the measurement model and the regularity of $g$.

\subsubsection{Measurement Model}
Let $\obs \subset [m] \times [n]$. We suppose the following measurement model:
\begin{equation}
	Z(i,j) = \begin{cases}
			A(i,j) + N(i,j)		& \textrm{if }(i,j) \in \obs,\\
			\textrm{unknown}	& \textrm{otherwise,}
		\end{cases}
\end{equation}
where $N \in \Reals^{m \times n}$ is a noise matrix. 
We impose the following assumptions on $N$ and $\obs$.
\paragraph{Assumptions on the Noise $N$}\label{sec:assumption_noise}
We assume the following properties hold for the noise distribution.
\begin{itemize}
	\item
	$N \equiv -N$ in distribution.	
	\item
	$N(i,j)$ are independent
	\item
	For each $i \in [m]$, there exists a random variable $N_i$ such that $N(i,j) \equiv N_i$ in distribution that satistifes
	\begin{itemize}
		\item
		(sub-gaussianity) there exists $\sigma_i > 0$ such that 
		$$ \bbE\big[ \exp (tN_i) \big] \leq \exp \Big( \frac{t^2 \sigma_i^2}{2}\Big), ~ \forall t \in \Reals.$$
		\item
		(super-smoothness) there exist $B_i \geq 1$, and $\beta_i, \gamma_i > 0$ such that
		\begin{equation}\label{eqn:model_supersmooth}
			\frac{1}{B_i}e^{ -\gamma_i |t|^{\beta_i} } \leq  \phi_{N_i} (t)  \leq B_i e^{ -\gamma_i |t|^{\beta_i} }, ~ \forall t \in \Reals,
		\end{equation}
		where $\phi_{N_i} (t)$ is the characteristic function of $N_i$.
	\end{itemize}
\end{itemize}
A centered Gaussian random matrix with i.i.d. entries is a typical example of such noise. For the simplicity of the exposition, 
we let $\sigma_i = \sigma, B_i = B, \gamma_i = \gamma, \beta_i = \beta$ for all $i \in [m]$.

\begin{remark}
Independence and sub-gaussianity are helpful in the analysis because they allow for the use of concentration inequalities. 
Symmetry and supersmoothness are commonly assumed in deconvolution literature for the success of plug-in CDF estmiator, 
which is obtained by integrating the deconvolution estimator of the density.
\end{remark}

\paragraph{Assumption on the $\obs$}\label{sec:assumption_mask}
Suppose that $M \in \{0, 1\}^{m \times n}$ is a random matrix with its entries drawn i.i.d. from Bernoulli distribution with 
parameter $p \in (0,1]$. Given an instance of $M$, we let 
\begin{equation}\label{eqn:obs_set}
	\obs = \{(i,j) \in [m] \times [n] \text{ s.t. } M(i,j) = 1\}.
\end{equation}
We refer to $M$ as the mask matrix.

\subsubsection{Regularity Assumptions on $g$}\label{sec:assumption_function}
To begin with, we remark that estimating $g$ from $Z$ without any structural assumptions is an ill-posed problem. Latentw variable 
representation of $A$ is not unique and there are multiple equivalent representations for $g$ up to measure-preserving transformations\footnote{For 
example, we can apply an invertible transform to the domain (the space of latent features) and take the push-forward of the latent function 
with respect to the transform, so that $A(i,j)$ remains the same under the new representation.}. 
We bypass this hurdle by redefining the objective as estimating $g\big(\frow{i}, \cdot~\big): [0,1] \to \Reals$ for $i \in[m]$ instead of 
estimating the bivariate latent function $g$ and imposing certain regularity assumptions on $g$ with respect to the second argument.


To be precise, we suppose that the latent function $g: [0,1]^2 \to \Reals$ satisfies the following two properties.
\begin{itemize}
	\item $g$ is bounded, i.e., $-\infty < \fmin \leq \fmax < \infty$ where 
	\begin{align*}
		\fmax	\triangleq \sup_{x,y \in [0,1]} g(x,y)  \qquad &\text{~and~}\qquad
		\fmin 	\triangleq \inf_{x,y \in [0,1]} g(x,y).
	\end{align*}

	\item $g$ is $(\lipmin, \lipmax)$ bi-Lipschitz with respect to the second argument. That is to say, there exist $\lipmin, \lipmax > 0$ 
	such that for all $x$ and for all $y_1 \neq y_2$,
	\begin{equation}\label{eqn:biLipschitz}
		0 < \lipmin \leq \frac{g(x,y_2) - g(x,y_1)}{y_2 - y_1} \leq \lipmax < \infty.	
	\end{equation}
\end{itemize}
A bi-Lipschitz mapping is injective (actually strictly monotone increasing), and is a bijection onto its image. Therefore, for each $x \in [0,1]$, 
we can define the inverse map of $g(x, \cdot)$ as $g^{-1}_{x} : \left[g(x,0), g(x,1) \right] \to [0,1]$. It is easy to verify that $g_x^{-1}$ is 
$\big(\frac{1}{\lipmax}, \frac{1}{\lipmin}\big)$ bi-Lipschitz. We may interpret $g^{-1}_{x}$ as the distribution function $F_x$ of a density $f_x$ 
that is supported on the interval $\left[g(x,0), g(x,1) \right]$ and $\frac{1}{\lipmax} \leq f_x(z) \leq  \frac{1}{\lipmin}$ for $z \in \left(g(x,0), g(x,1) \right)$.

Lastly, we remark here that the monotonicity of $g$ is assumed only with respect to the second argument and we do not impose such 
monotonicity assumptions with regard to the first argument.

\subsection{Problem Statement}\label{sec:statement}
\subsubsection{Deconvolution}
Let $F_i = g_{\frow{i}}^{-1}$ for all $i \in [m]$, which is the distribution function of the random variable associated with the $i$-th row of $A$. 
We want to estimate $F_i$ for all $i \in [m]$ from the data matrix $Z$. Suppose that $\varphi: Z \mapsto (\hat{F}_1, \ldots , \hat{F}_m )$ is 
an estimator of $F_1, \ldots, F_m$ based on $Z$. We define the risk of $\varphi$ using the squared $L_2$ loss maxized over $i \in [m]$, i.e.,
\begin{equation}\label{eqn:loss_phi}
	\RiskD(\varphi) = \bbE_{Z}~ \LossD\big( \varphi(Z); F_1, \ldots, F_m \big)	
		\qquad\text{where}\qquad \LossD(\hat{F}_1, \ldots, \hat{F}_m; F_1, \ldots, F_m) = \sup_{i \in [m]} \| \hat{F}_i - F \|_{L^2}^2.
\end{equation}
That is, we evaluate the performance of the estimator $\varphi$ in the $L_2$ sense for the worst $\hat{F}_i$ over $i \in [m]$. With the aid 
of above notion of risk, we pose the first problem of interest as follows.

\begin{problem}\label{problem:deconv}
	Can we build an efficient algorithm $\varphi$ to estimate $F_1, \ldots, F_m$ that achieves the optimal rate of $\RiskD(\varphi)$ as $m, n \to \infty$?
\end{problem}

\subsubsection{Matrix Estimation}
In some applications, one may want to estimate the matrix $A$ from its partial, and possibly noisy observation $Z$, rather than estimating 
$F_1, \ldots, F_m$. Let $\psi: Z \mapsto \hat{A}$ be an estimator of $A$ from $Z$. We define the risk of $\psi$ as follows\footnote{The loss 
function is the squared max norm of $\hat{A} - A$, or equivalently, the squared $L_{\infty, \infty}$ matrix norm of $\hat{A} - A$.}:
\begin{equation}\label{eqn:loss_psi}
	\RiskME(\psi) = \bbE_{Z}~ \LossME\big( \varphi(Z); A \big)
		\qquad\text{where}\qquad \LossME( \hat{A}; A ) = \sup_{(i, j) \in [m] \times [n] } | \hat{A}(i,j) - A(i,j) |^2.
\end{equation}
Now we pose the second problem of our interest as the following. 

\begin{problem}\label{problem:mat_est}
	Can we build an efficient algorithm $\psi$ to estimate $A$ from $Z$ such that $\RiskME(\psi) \to 0$ as $m,n \to \infty$? 
	What are the upper and lower bounds on $\RiskME(\psi)$?
\end{problem}

We provide a partial answer to Problem \ref{problem:deconv} in Corollary \ref{coro:answer.1} and discuss about the optimality (in some sense) 
of the achieved rate from deconvolution viewpoint in Corollary \ref{coro:lower_noisy}. We also provide a partial answer 
to Problem \ref{problem:mat_est} in establishing an upper bound in Corollary \ref{coro:answer.2}.

\section{Algorithm}\label{sec:algorithm}

In this section, we describe our algorithm to estimate $F_1, \ldots, F_m$ and reconstruct $A$ from $Z$. The generic procedure 
consists of three steps: (1) estimating the column feature $\fcol{j}$ for all $j \in [n]$; (2) estimating $F_1, \ldots, F_m$ using the 
`rankings' estimated in step 1; and (3) reconstructing the matrix $A$ by combining the aforementioned estimates together.
The details in the first two steps vary depending on the noise assumptions and are adapted for each of the three noise scenarios considered 
in this work: noiseless (Section \ref{sec:alg_noiseless}), noisy with known noise distribution (Section \ref{sec:alg_noisy_known}), and noisy 
with unknown noise distribution (Section\ref{sec:alg_noisy_unknown}).

\medskip
\noindent{\bf Notation.}
For $i \in [m]$, and for $j \in [n]$, we define
\begin{align}
	\cB_i &= \{ j' \in [n]: M(i,j') = 1 \},		\label{eqn:set_support_row}\\
	\cB^j &= \{ i' \in [m]: M(i',j) = 1 \}.	\label{eqn:set_support_column}
\end{align}
We let $\bbI$ denote the indicator function, i.e., given a boolean formula, namely, `condition', $\bbI\{\text{condition} \} = 1$ if and only 
if condition is true (and $0$ otherwise). Lastly, we define $\bbI_H: \Reals \to \left\{0, \frac{1}{2}, 1 \right\}$ as
\begin{equation}\label{eqn:Heaviside}
	\heavi{x} = \frac{1}{2} \big( \Ind{x > 0} + \Ind{ x\geq 0 } \big).
\end{equation}

\medskip
\noindent{\bf Handling exceptions.}
For completeness, we describe how our algorithm handles exceptions such as $\cB_i = \emptyset$ or $\cB^j = \emptyset$. 
For $j \in [n]$ with $\cB^j = \emptyset$, we let our algorithm output a trivial estimate $\hfcol{j} = \frac{1}{2} $. Likewise, for $i \in [m]$ 
with $\cB_i = \emptyset$, we let our algorithm return a trivial estimate\footnote{In case, $\fmin, \fmax$ are not known a priori, 
we instead use any given constants $\tfmin, \tfmax$ such that $ \tfmin \leq \fmin$ and $ \tfmax \geq \fmax$.} $\hat{g}^{(i)} (z) 
= (\fmax - \fmin) z + \fmin $ for $z \in [0,1]$.

\subsection{Scenario 1: Noiseless Setup}\label{sec:alg_noiseless}
As a warm-up, we describe our algorithm when there is no noise, i.e., when $N = 0$.
 
\begin{algorithm}
\caption{Algorithm in the noiseless setup}
\label{alg:noiseless}
{
\begin{enumerate}
	\item Estimation of $\fcol{j}$: 
		For all $j \in [n]$ and all $i \in \cB^j$, we define 
		\begin{equation}\label{eqn:quantile_rowwise}
			\hat{q}_i(j) = \frac{1}{| \cB_i |}\sum_{j' \in \cB_i}  \heavi{ Z(i, j) - Z(i, j') }. 	
		\end{equation}
		Then we define our estimate for $\fcol{j}$ to be
		\begin{equation}\label{eqn:quantile_est}
			\hfcol{j} = \hat{q}_{i^*}(j)
		\end{equation}
		where $i^* = i^*(j)$ is chosen from $\cB^j$ uniformly at random.
	\item Estimation of $F_i$:
		For $i \in [m]$, we define $\breve{F}_i: \Reals \to [0,1]$ as
		\begin{equation}\label{eqn:ECDF_noiseless}
			\breve{F}_i (z) = \frac{1}{| \cB_i |}\sum_{j' \in \cB_i } \Ind{Z(i,j') \leq z }.
		\end{equation}
	
	\item Estimation of $A$ by plug-in: 
		For each $i \in [m]$ and $j \in [n]$, $\hat{A}(i,j) = \breve{F}_i^{-1} \big( \hfcol{j} \big)$.
	\end{enumerate}
}
\end{algorithm}

We note that for any given $x \in [0,1]$, the latent function $g(x, \cdot): [0,1] \to \mathbb{R}$ $(\lipmin, \lipmax)$ is invertible 
due to our model assumptions. We interpret $F_i = g^{-1}(\frow{i}, \cdot): \mathbb{R} \to [0,1]$ as the distribution function 
of the random variable associated with the $i$-th row. With an estimate $\breve{F}_i$ of $F_i$ at hand, we define an estimate of 
$g(\frow{i}, \cdot)$ as the (pseudo-) inverse of $\breve{F}_i$\footnote{That is, we view $F_i$ as a CDF and $g(\frow{i}, \cdot)$ as 
the corresponding quantile function. See Definitions \ref{defn:CDF} and \ref{defn:Quantile} in Appendix \ref{appx:distribution} for details.}.

\subsection{Scenario 2: Noisy Measurement Setup with Known Noise Distribution}\label{sec:alg_noisy_known}
Now we consider a more realistic setup where we observe $Z$ with nontrivial additive noise $N$. 
First, notice that we cannot simply use $\hat{q}_i(j)$ defined in \eqref{eqn:quantile_rowwise} -- the empirical quantile along 
a given row $i$ -- as a proxy of $\fcol{j}$ unlike the noiseless setting. However, we can overcome the obstacle by ``averaging'' 
out the noise. To that end, we shall use empirical quantile estimation based on the ``averaged'' value. For each $j \in [n]$, we define 
\begin{equation}\label{eqn:Z_marg}
	Z_{\marg}(j) = \frac{ 1 }{ | \cB^j |} \sum_{i' \in \cB^j } Z(i',j)
\end{equation}
and
\begin{equation}\label{eqn:estimate_marg}
	\hat{q}_{\marg}(j) = \frac{1}{n}\sum_{j' = 1}^n ~\heavi{ Z_{\marg}(j) - Z_{\marg}(j') }.
\end{equation}

Also, when estimating $F_i$, we cannot simpy use the empirical CDF $\breve{F}_i$ any longer. Instead, we define kernel 
deconvolution estimator of $F_i$ by integrating the kernel deconvolution estimator of density $f_i$.
Since $Z(i,j) = A(i,j) + N(i,j)$ is the sum of two independent random variables $A(i,j) = g(\frow{i}, \fcol{j})$ and $N(i,j)$, the density 
of $Z$ is given as the convolution of the signal density and the noise density. We estimate the distribution of the signal random variable 
by traditional plug-in kernel deconvolution estimator, which reconstructs the signal density by shaving off the noise and then integrate 
the density.	

Let $\phi_{N_i}$ denote the characteristic function of the noise, which is the Fourier transform of the noisy density. 
Let $K$ be a symmetric Kernel and $\phi_K$ denote its Fourier transform. We assume 
\begin{itemize}
	\item 
		$\supp \phi_K \subset [-1,1]$, i.e., $\phi_K(t) = 0$ if $t \not\in[-1,1]$. 
	\item
		$\kmax=\max_{t \in [-1,1]} \left| \phi_K(t) \right| < \infty$
\end{itemize}
Using $K$ and the knowledge on the noise distribution, we define a function $L_i$ as 
$L_i \triangleq \cF^{-1} \left\{ \frac{ \phi_K(\, \cdot \,) }{\phi_{N_i}(\, \cdot \, h^{-1})} \right\}$, i.e., for $z \in \Reals$,
\begin{equation}\label{eqn:kernel_known_main}
	L_i(z) = \frac{1}{2\pi} \int \exp(- \img\, t z ) \frac{\phi_K(t)}{\phi_{N_i}\left(\frac{t}{h}\right)} dt.
\end{equation}
For each $i \in [m]$, we define the kernel deconvolution estimator of the density using $L$ as
\begin{equation}\label{eqn:known_density}
	\tilde{f}_i(z) = \frac{1}{h |\cB_i|} \sum_{j\in \cB_i} L_i \left( \frac{z- Z(i,j)}{h} \right)
\end{equation}
where $h$ denotes the kernel bandwidth parameter. Specifically, we choose $h = \left(4\gamma \right)^{\frac{1}{\beta}}
\left( \log |\cB_i| \right)^{-\frac{1}{\beta}}$ where $\beta$ and $\gamma$ are smoothness parameters for the noise $N_i$: cf. \eqref{eqn:model_supersmooth}.
Lastly, we obtain $\tilde{F}_i$ by integrating $\tilde{f}_i$.

\begin{algorithm}
\caption{Algorithm in the noisy setup when the noise distribution is known}
\label{alg:noisy_known}
{
\begin{enumerate}
	\item Estimation of $\fcol{j}$: 
		For all $j \in [n]$, we let $\hfcol{j} = \hat{q}_{\marg}(j)$, cf. \eqref{eqn:estimate_marg}.

	\item Estimation of $F_i$:
		For $i \in [m]$, we define $\tilde{F}_i: \Reals \to [0,1]$ as
		\begin{equation}\label{eqn:ECDF_known_noise}
			\tilde{F}_i(z) = \begin{cases}
				\int_{\fmin}^{z } \tilde{f}_i(w) dw,	& \text{if } z < \fmax,\\
							1,			& \text{if } z \geq \fmax.
			\end{cases}
		\end{equation}
		where $\tilde{f}_i$ is defined as in \eqref{eqn:known_density}.
	
	\item Estimation of $A$ by plug-in: 
		For each $i \in [m]$ and $j \in [n]$, $\hat{A}(i,j) = \tilde{F}_i^{-1} \big( \hfcol{j} \big)$.
	\end{enumerate}
}
\end{algorithm}

\subsection{Scenario 3: Noisy Measurement Setup with Unknown Noise Distribution}\label{sec:alg_noisy_unknown}
When the noise distribution is not known a priori, the CDF estimate defined in \eqref{eqn:ECDF_known_noise} is no longer valid 
because the deconvolution kernel $L_i$ requires the knowledge of $\phi_{N_i}$; see \eqref{eqn:kernel_known_main}. 
To overcome the challenge, we first estimate the noise characteristic function and then define a modified deconvolution estimator 
with the estimate. We first discuss in Section \ref{sec:deconv_mod} how to modify the deconvolution estimator, assuming the availability 
of accurate noise characteristic function estimation. Then we argue in Section \ref{sec:noise_estimation} that such an accurate estimation 
of the noise characteristic function $\phi_N(t)$ is possible by providing an explicit form of the estimator $\hat{\phi}_{N, i}(t)$ in \eqref{eqn:chN_est} 
and a concrete construction algorithm, cf. Algorithm \ref{alg:setT_main}.

\subsubsection{Modified Deconvolution Kernel Estimator}\label{sec:deconv_mod}
Fix $i \in [m]$. Suppose that we are given $\hat{\phi}_{N,i}$ such that $\hat{\phi}_{N,i}(t) \approx \phi_{N_i}(t)$ for all $t \in \big[-\frac{1}{h}, \frac{1}{h} \big]$. 
We assume $\hat{\phi}_{N, i}(t)$ is real and $\hat{\phi}_{N, i}(t) \geq 0$ for all $t \in \Reals$.

With $\hat{\phi}_{N, i}$ at hand, we define a modified deconvolution kernel $\hat{L}_i$ as
\begin{equation}\label{eqn:kernel_estimated}
	\deconvker(z) = \frac{1}{2\pi} \int e^{-\img tz} \frac{\phi_K(t)}{\hat{\phi}_{N,i}\left(\frac{t}{h}\right) + \rho}dt.	
\end{equation}
In this paper, we specifically choose the ridge parameter $\rho = \frac{1}{B} |\cB_i|^{-\frac{9}{20}}$ (we may choose 
$\rho = \frac{1}{B} |\cB_i|^{-\frac{1}{2} + \delta}$ for any $0 < \delta < \frac{1}{4}$) for the convenience of our analysis. 
Then we define 
\begin{equation}\label{eqn:unknown_density}
	\hat{f}_i(z) = \frac{1}{h |\cB_i|} \sum_{j\in \cB_i} \deconvker \left( \frac{z- Z(i,j)}{h} \right)
\end{equation}
with the same choice of the bandwidth parameter $h = \left(4\gamma\right)^{\frac{1}{\beta}} \left( \log \left| \cB_i \right| \right)^{-\frac{1}{\beta}}$ 
as in Section \ref{sec:alg_noisy_known}. The rest of the procedure remains the same.

\begin{algorithm}
\caption{Algorithm in the noisy setup when the noise distribution is unknown}
\label{alg:noisy_unknown}
{
\begin{enumerate}
	\item Estimation of $\fcol{j}$: 
		For all $j \in [n]$, we let we let $\hfcol{j} = \hat{q}_{\marg}(j)$, cf. \eqref{eqn:estimate_marg}.
		
	\item Estimation of $F_i$: For $i \in [m]$, 
		\begin{itemize}
		\item
		we estimate $\phi_N(t)$ with $\hat{\phi}_{N, i}(t)$ as described in \eqref{eqn:chN_est}, and then 
		 
		\item
		define $\hat{F}_i: \Reals \to [0,1]$ as
		\begin{equation}\label{eqn:ECDF_unknown_noise}
			\tilde{F}_i(z) = \begin{cases}
				\int_{\fmin}^{z } \hat{f}_i(w) dw,	& \text{if } z < \fmax,\\
							1,			& \text{if } z \geq \fmax.
			\end{cases}
		\end{equation}
		where $\hat{f}_i$ is defined as in \eqref{eqn:unknown_density}.
		\end{itemize}
	
	\item Estimation of $A$ by plug-in: 
		For each $i \in [m]$ and $j \in [n]$, $\hat{A}(i,j) = \hat{F}_i^{-1} \big( \hfcol{j} \big)$.
	\end{enumerate}
}
\end{algorithm}

\subsubsection{Estimation of the Noise Distribution}\label{sec:noise_estimation}
To begin with, suppose that we can repeatedly observe the same instance 
$X$ of target random variable up to independent additive noise, i.e., $Z^{(j)} = X + N^{(j)}$ with $N^{(j)}$ independent. 
Although we don't know the value of $X$, we can see that the difference in the observed data entries is equal to the difference 
between two independent noise instances: $Z^{(1)} - Z^{(2)} = \big( X + N^{(1)} \big) - \big( X + N^{(2)} \big) = N^{(1)} - N^{(2)} $. 
Assuming symmetry in the noise distribution, $N^{(1)} - N^{(2)} \equiv N^{(1)} + N^{(2)} $. Therefore, $\phi_{N^{(1)} - N^{(2)}}(t) = \phi_N(t)^2$.
From symmetry of $N$, we know that $\phi_N(t)$ is real-valued. Moreover, it is positive because $N$ is assumed to be supersmooth. 
Therefore, we can estimate $\phi_N(t)$ by taking square root of the (absolute value of the) estimate $\hat{\phi}_{N^{(1)} - N^{(2)}}(t)$ as
\[
	\hat{\phi}_N(t) = \hat{\phi}_{N^{(1)} - N^{(2)}}(t)^{\frac{1}{2}} 
		= \bigg| \frac{1}{n} \sum_{i=1}^n \cos \Big[ t \big( N^{(1)} - N^{(2)} \big) \Big] \bigg|^{\frac{1}{2}}.
\]
However, the repeated measurement assumption is not feasible because we have {\em at most} one measurement for a given index 
$(i, j)$. Despite this challenge, we can still hope to obtain {\em nearly} repeated samples from observations in a given row, 
if we choose columns $j_1, j_2 \in [n]$ that have {\em very} similar features $\fcol{j_1} \approx \fcol{j_2}$ so that $A(i, j_1) - A(i,j_2) \approx 0$ and
\begin{align*}
	Z(i,j_1) - Z(i,j_2)		
		&= \left[ A(i, j_1) - A(i,j_2) \right]  + \left[ N(i,j_1) - N(i,j_2) \right]
		\approx N(i,j_1) - N(i,j_2). 	
\end{align*}

For the ease of exposition, we assume $N_i \equiv N$ in distribution for all $i \in [m]$. We estimate $\phi_N$ as follows.
\begin{enumerate}
	\item 
	Construct set $\cT$ as described in Algorithm \ref{alg:setT_main}.

	\item 
	For each $i \in [m]$, define 
	\begin{equation}\label{eqn:set_Ti}
		\cT_i := \Big\{ (i',j_1, j_2) \in \cT: i' \neq i\Big\}.
	\end{equation}
	and define
	\begin{equation}\label{eqn:chN_est}
		\hat{\phi}_{N, i}(t) =	
			\Bigg| \frac{1}{\left| \cT_i \right|} \sum_{ \left(i', j_1, j_2 \right) \in \cT_i} 
				\cos \Big[ t \big( Z(i', j_1) - Z(i', j_2)  \big) \Big] \Bigg|^{\frac{1}{2}}.	
	\end{equation}
\end{enumerate}
Intuitively, $\cT$ is the set of index triples to imitate the repeated measurements. The refinement of $\cT$ to $\cT_i$ for each row $i$ is done 
only for the convenience in our analysis and might be unnecessary; one may be able to define $\hat{\phi}_N$ with the entire set $\cT$ and use 
it for all $i \in [m]$.

\begin{algorithm}
	\caption{Construction of the set $\cT$}
	\label{alg:setT_main}
	\SetAlgoLined
	\SetKwInOut{Input}{input}
	\SetKwInOut{Output}{output}
    
    	\Input{Data matrix $Z$ of size $(m, n)$}
	\Output{The set of index triples $\cT \subset [m] \times [n] \times [n]$}
	$J \gets \big\{ j \in [n]: |\cB^j| \geq \frac{mp}{2}\big\}$\;
	$I \gets \big\{ i \in [m]: |\cB_i \cap J| \geq \frac{|J|p}{2}\big\}$\;
	$\cT \gets \emptyset$ \;
	Sort $j \in [n]$ in the increasing order of $\hat{q}_{\marg}(j)$, i.e., find a permutation $\pi$ such that $\hat{q}_{\marg}\left( j \right) \leq \hat{q}_{\marg}\left( j' \right)$ if $\pi(j) <\pi( j')$\;
	\For{$i \in I$}{
 		Renumber $j \in \cB_i \cap J$ with $j' \in \left[\left| \cB_i \cap J \right| \right]$ in the increasing order of $\hat{q}_{\marg}\left( j \right)$\;
		(let $\sigma_i: \cB_i \cap J \subseteq [n] \to \left[\left| \cB_i \cap J \right| \right]$; this map can be induced from $\pi$)\\
		$j' \gets 0$\;
		\While{$j' \leq \left| \cB_i \cap J \right| - 1 $}{
		\eIf{$\hat{q}_{\marg}\big( \sigma_i^{-1} (j' + 1)\big) - \hat{q}_{\marg}\big( \sigma_i^{-1} (j')\big) \leq \frac{1}{\sqrt{\left| \cB_i \cap J \right|}}$}{
			$\cT \gets \cT \cup \big\{ (i, \sigma_i^{-1}\left(j'\right), \sigma_i^{-1}(j'+1)) \big\}$\;
			$j' \gets j' + 2$\;
		}{
			$j' \gets j' + 1$\;
		}
		}
 	}
\end{algorithm}

\section{Main Results on Noise Scenario 3}	\label{sec:main_results}

\subsection{Definitions of Key Quantities}
First, we let
\begin{align*}
	\constaa &= \frac{1}{ \lipmin } \big( \fmax - \fmin + 2\sigma \big),\\
	\constbb &= \constbb(\lipmin) > 0\\
	\constcc &= \frac{B \kmax (\fmax - \fmin)}{\pi (4\gamma)^{\frac{1}{\beta}}}
\end{align*}

Then we also define two quantities:
\begin{align}
	\valphiAA &= 64 \sqrt{\frac{\log(mn)}{mnp}} \bigg[ 1 + (4\gamma)^{-\frac{1}{\beta}} \lipmax 
		\bigg( \frac{32 \sqrt{ \pi} \constaa}{\sqrt{ mp } } + \frac{2\sqrt{2}}{\sqrt{ n p}} 
		+ 8\sqrt{\frac{\log n}{n}} \bigg) \big( \log n \big)^{\frac{1}{\beta}}\bigg],	\label{eqn:valphi_A}\\
	\valphiBB &= \frac{1}{mn} \Bigg\{ (4\gamma)^{-\frac{2}{\beta}} \Bigg[  6 \lipmax^2 \bigg( \frac{1024 \pi \constaa^2}{mp} + \frac{8}{np} 
		+ \frac{64 \log n}{n} \bigg) + 128 \sigma^2 \log(mn) \Bigg] \big( \log n \big)^{\frac{2}{\beta}} 
		+ 2 ( 4\gamma)^{-\frac{1}{\beta}} \sigma B \big( \log n \big)^{\frac{1}{\beta}}  \Bigg\}.	\label{eqn:valphi_B}
\end{align}

With aid of $\valphiAA$ and $\valphiBB$, we define a conditioning event
\begin{equation}\label{eqn:Ef}
	\Ephi := \bigg\{ \sup_{t \in [-\frac{1}{h}, \frac{1}{h} ]}  \big| \hatNest (t) - \phi_N(t) \big|^2  \leq   \valphiAA + \valphiBB  \bigg\}.	
\end{equation}
Recall that we have chosen $h = (4\gamma)^{\frac{1}{\beta}}(\log n )^{-\frac{1}{\beta}}$.

\subsection{Theorem Statements}

Here we shall establish that $\hat{F}_i$ converges uniformly to $F_i$ in the large sample limit. Specifically, we obtain 
an exponentially decaying probabilistic tail bound for the uniform convergence, conditioned on the availability of a good 
estimator of noise characteristic function (implied by the description of event $\Ephi$ in \eqref{eqn:Ef}).

\begin{theorem}\label{thm:cdf_noisy_unknown}
For $i \in [m]$, let $\tilde{F}_i$ be defined as in \eqref{eqn:ECDF_known_noise} and $\hat{F}_i$ be defined as in 
\eqref{eqn:ECDF_unknown_noise} with $\hat{\phi}_{N}(t) = \hat{\phi}_{N, i}(t)$ as described in Section \ref{sec:noise_estimation}, 
cf. \eqref{eqn:chN_est}. 
Suppose that the kernel bandwidth $h = (4\gamma)^{\frac{1}{\beta}}(\log |\cB_i|)^{-\frac{1}{\beta}}$ and the ridge parameter 
$\rho = \frac{1}{B} |\cB_i|^{-\frac{9}{20}}$. 
If $|\cB_i| \geq 1024$ and $mp$ and $n$ are sufficiently large so that 
$\valphiAA + \valphiBB \leq \frac{1}{B} |\cB_i|^{-\frac{9}{20}}$, then for any $t \geq 0$,
\begin{align*}
	&\bbP\bigg( \sup_{z \in [\fmin, \fmax] } \big| \hat{F}_i (z) - F_i(z) \big| > 
		t + \big( \constbb + \constcc \big) \big( \log n_i \big)^{-\frac{1}{\beta}} 
			+ 4\constcc  \frac{\big( \log n_i \big)^{\frac{1}{\beta}}}{ n_i^{\frac{1}{5}} }	~\bigg|~ \Ephi \cap \big\{ |\cB_i| = n_i \big\} \bigg)\\
		&\qquad\leq 2 n_i^{\frac{9}{20}} \big( \log n_i \big)^{\frac{2}{\beta}}
				\exp\left( - \frac{ n_i^{\frac{1}{10}} }{2 \constcc^2 \left( \log n_i \right)^{\frac{2}{\beta}} } t^2  \right).
\end{align*}
\end{theorem}
By letting $t$ of order $\big( \log |\cB_i| \big)^{-\frac{1}{\beta} }$, we can conclude from Theorem \ref{thm:cdf_noisy_unknown} that 
$\sup_{z \in [\fmin, \fmax] } \big| \hat{F}_i (z) - F_i(z) \big|$ decays to $0$ as $np \to \infty$ at the rate of at least $\big( \log |\cB_i| \big)^{-\frac{1}{\beta}}$ 
with high probability (conditioned on $\Ephi$). The proof of Theorem \ref{thm:cdf_noisy_unknown} can be found in Appendix \ref{sec:proof_noisy_unknown_CDF}.

\begin{remark}\label{rem:valphi}
We observe that 
\begin{align*}
	\valphiAA &= \cO \bigg( \sqrt{\frac{\log(mn)}{mnp}} \bigg),\qquad\text{and}\qquad
	\valphiBB = \cO \bigg( \frac{ \log(mn) (\log n)^{\frac{2}{\beta}}}{mn} \bigg).
\end{align*}
Since $|\cB_i| \leq n$, the condition $\valphiAA + \valphiBB \leq \frac{1}{B} |\cB_i|^{-\frac{9}{20}}$ is easily satisfied 
when $mp$ and $n$ are sufficiently large.
\end{remark}

Next, we argue that such an accurate noise estimation is possible with high probability with the proof of Theorem \ref{thm:ensure_condition} 
postponed to Appendix \ref{sec:proof_ensure}.
\begin{theorem}\label{thm:ensure_condition}
Let $\Ephi$ denote the event as defined in \eqref{eqn:Ef} where $\valphiAA$, $\valphiBB$ are as described in \eqref{eqn:valphi_A}, \eqref{eqn:valphi_B}. 
Then
\[	
	\Prob{\Ephi^c} 
	\leq   \frac{3}{n^7} + \frac{6}{m^7n^7} + n \exp \left( - \frac{mp}{8} \right) + \exp \left( - \frac{m}{16} \right) + 2 \exp \left( - \frac{n}{16} \right).
\]
\end{theorem}

\subsection{Implications}

\subsubsection{On Deconvolution}

Combining Theorem \ref{thm:cdf_noisy_unknown} and Theorem \ref{thm:ensure_condition} leads to Corollary \ref{coro:answer.1}, 
which provides a partial answer to Problem \ref{problem:deconv}. We remark that the corollary implies that $\RiskD$ is approximately 
$2 \big( \fmax - \fmin \big) \big( \constbb + 2\constcc \big)^2 \big( \log \nmin \big)^{-\frac{2}{\beta}} $ in the asymptotic regime where $m, np \to \infty$.

\begin{corollary}[Partial Answer to Problem \ref{problem:deconv}]\label{coro:answer.1}
	Let $\varphi: Z \mapsto (\hat{F}_1, \ldots , \hat{F}_m )$ denote an estimator that outputs $\hat{F}_i$ as described 
	in \eqref{eqn:ECDF_unknown_noise}. If $mp$ and $n$ are sufficiently large so that the condition in 
	Theorem \ref{thm:cdf_noisy_unknown} is satisfied, then
	\begin{align*}
		\RiskD(\varphi)	
			&\leq\big( \fmax - \fmin \big) \Bigg[ 2 \big( \constbb + 2\constcc \big)^2 \Big[ \log \Big( \frac{np}{2} \Big) \Big]^{-\frac{2}{\beta}} 
			+ 32 \constcc^2\frac{\big[ \log ( 2np ) \big]^{\frac{2}{\beta}}}{ \big( \frac{np}{2} \big)^{\frac{2}{5}} }		
			+ 2m (2np)^{\frac{9}{20}} \big[ \log (2np) \big]^{\frac{2}{\beta}} \exp \bigg( - \frac{\big( \frac{np}{2} \big)^{\frac{1}{10}}}{2} \bigg)\\
			&\qquad 
				+ \frac{3}{n^7} + \frac{6}{m^7n^7} + 2m \exp \left( - \frac{np}{8} \right) + n \exp \left( - \frac{mp}{8} \right) + \exp \left( - \frac{m}{16} \right) + 2 \exp \left( - \frac{n}{16} \right) \Bigg].
	\end{align*}
\end{corollary}
We remark here that $\RiskD(\varphi) \lesssim 2\big( \fmax - \fmin \big) \big( \constbb + 2\constcc \big)^2 \Big[ \log \Big( \frac{np}{2} \Big) \Big]^{-\frac{2}{\beta}}$ 
as this leading term dominates the others as $mp, np \to \infty$.

\begin{proof}
Let $\Erow := \cap_{i=1}^m \{ \frac{np}{2} \leq |\cB_i| \leq 2np \}$. We observe that $|\cB_i| = \sum_{j=1}^n \Ind{M_{ij} = 1}$ is the sum of $n$ independent 
Bernoulli random variables for all $i \in [m]$. We have $\Prob{ |\cB_i| < \frac{np}{2} } \leq \exp \big( -\frac{np}{8} \big)$ and 
$\Prob{ |\cB_i| > 2np } \leq \exp \big( -\frac{np}{3} \big)$ for each $i \in [m]$ by the binomial Chernoff bound. Applying the union bound, 
\begin{equation}\label{eqn:upper_row}
	\Prob{\Erow^c} \leq \sum_{i=1}^m \bigg[ \Prob{ |\cB_i| < \frac{np}{2} } + \Prob{ |\cB_i| > 2np } \bigg] \leq 2m \exp \Big( -\frac{np}{8} \Big).
\end{equation}

Now we recall the definition of $\RiskD(\varphi)$ from \eqref{eqn:loss_phi}. We can see that for any $\delta > 0$,
\begin{align*}
	\RiskD(\varphi) &= \bbE_{Z} \Big[~ \sup_{i \in [m]} \| \hat{F}_i - F \|_{L^2[\fmin, \fmax]}^2 ~\Big] \\
		&\leq \big( \fmax - \fmin \big) \Bigg( \delta^2 + \bbP\bigg( \sup_{i \in [m]} \sup_{z \in \Reals} \big| \hat{F}_i(z) - F_i(z) \big| > \delta \bigg) \Bigg)\\
		&\leq \big( \fmax - \fmin \big) \Bigg( \delta^2 + \bbP\bigg( \sup_{i \in [m]} \sup_{z \in \Reals} \big| \hat{F}_i(z) - F_i(z) \big| > \delta ~\Big|~ \Ephi \cap \Erow \bigg) + \Prob{\Ephi^c \cup \Erow^c}  \Bigg)\\
		&\leq \big( \fmax - \fmin \big) \Bigg( \delta^2 + \Prob{\Ephi^c} + \Prob{\Erow^c} +  \sum_{i \in [m]} \bbP\bigg(  \sup_{z \in \Reals} \big| \hat{F}_i(z) - F_i(z) \big| > \delta ~\Big|~ \Ephi \cap \Erow \bigg) \Bigg)
\end{align*}
Let $\nmin = \frac{np}{2}$ and $\nmax = 2np$. With the choice of $t = \constcc \big( \log \nmin \big)^{-\frac{1}{\beta}}$ and 
$\delta = \big( \constbb + 2\constcc \big) \big( \log \nmin \big)^{-\frac{1}{\beta}} + 4 \constcc \frac{\big( \log \nmax \big)^{\frac{1}{\beta}}}{ \nmin^{\frac{1}{5}} } $, 
for all $i \in [m]$,
\begin{align*}
	\Prob{  \sup_{z \in \Reals} \big| \hat{F}_i(z) - F_i(z) \big| > \delta ~\Big|~ \Ephi \cap \Erow }
		&\leq 
			2 \nmax^{\frac{9}{20}} \big( \log \nmax \big)^{\frac{2}{\beta}} \exp \left( - \frac{\nmin^{\frac{1}{10}}}{2} \right).
\end{align*}
We conclude the proof by noticing that $\delta^2 \leq 2\big( \constbb + 2\constcc \big)^2 \big( \log \nmin \big)^{-\frac{2}{\beta}} 
+ 32 \constcc^2\frac{\big( \log \nmax \big)^{\frac{2}{\beta}}}{ \nmin^{\frac{2}{5}} }$.
\end{proof}

\subsubsection{On Matrix Estimation}
We remark that we actually establish the reliability of the estimated column feature, $\hat{q}_{\marg}(j) \approx \fcol{j}$, in the course of 
proving Theorem \ref{thm:cdf_noisy_unknown}. This results is summarized as the following proposition and its proof can be found in 
Appendix \ref{sec:proof_col_noisy}..

\begin{proposition}\label{prop:quantile_noisy}
For any $j \in [n]$, let $\hat{q}_{\marg}(j)$ be defined as in \eqref{eqn:estimate_marg}. Then for any $t > 0$,
\begin{align*}
	\Prob{ \big| \hat{q}_{\marg}(j) - \fcol{j} \big| > t + \frac{8 \sqrt{2 \pi} \constaa}{\sqrt{ m_* } } ~\bigg|~ \Big\{ \min_{j' \in [n]} | \cB^{j'} | = m_* \Big\} }
		&\leq 3\exp \left( -\frac{nt^2}{2} \right)
\end{align*}
\end{proposition}

The above proposition is used as a lemma in the proof of Theorem \ref{thm:cdf_noisy_unknown} in order to argue that the estimated 
noise characteristic function, $\hat{\phi}_{N, i}(t)$, is uniformly close to the true noise characteristic function $\phi_N(t)$ over 
$t \in [-\frac{1}{h}, \frac{1}{h}]$. However, there is a further implication of Proposition \ref{prop:quantile_noisy} when it is combined with 
Theorem \ref{thm:cdf_noisy_unknown}, which provides an upper bound on the error of estimating the matrix $A$ in the max row $\ell_2$ 
norm sense. This result is summarized in Corollary \ref{coro:answer.2}, which also provides an answer to our Problem \ref{problem:mat_est} 
stated in Section \ref{sec:setup}.

\begin{corollary}[Answer to Problem \ref{problem:mat_est}]\label{coro:answer.2}
	Let $\psi$ denote the steps 1-3 of Algorithm. If $mp$ and $n$ are sufficiently large so that the condition in 
	Theorem \ref{thm:cdf_noisy_unknown} is satisfied, then
	\begin{align*}
		\RiskME(\psi)
			&\leq \constdd^2 
				+ 2 \Big( 2 m ( 2np )^{\frac{9}{20}} \big[ \log (2np) \big]^{\frac{2}{\beta}} + 3n \Big) \bigg( \sqrt{\frac{\pi}{2}}\constdd + \constee \bigg) \constee\\
			&\quad	+ 2 \big( \fmax - \fmin \big)^2	\bigg[  \frac{3}{n^7} + \frac{6}{m^7n^7} + m \exp \left( - \frac{np}{8} \right) + 2n \exp \left( - \frac{mp}{8} \right) 
				+ \exp \left( - \frac{m}{16} \right) + 2 \exp \left( - \frac{n}{16} \right) \bigg].
	\end{align*}
	where
	\begin{align*}
		\constdd	&=	\lipmax \bigg\{ \big( \constbb + \constcc \big) \Big[ \log \big( \frac{np}{2} \big) \Big]^{-\frac{1}{\beta}} 
				+ 4\constcc  \frac{\big[ \log ( 2np) \big]^{\frac{1}{\beta}}}{ \big( \frac{np}{2} \big)^{\frac{1}{5}} } 
				+  \frac{8 \sqrt{2 \pi} \constaa}{\sqrt{ \frac{mp}{2} } } \bigg\}\\
		\constee	&=	\lipmax \Big[ \frac{\constcc [ \log (2np) ]^{\frac{1}{\beta}}}{( \frac{np}{2} )^{\frac{1}{20}}} + \frac{1}{\sqrt{n}} \Big].
	\end{align*}
\end{corollary}
We remark here that $\constdd^2$ is the leading term in the upper bound in Corollary \ref{coro:answer.2} as it diminishes to $0$ at a logarithmic rate as $np \to \infty$ 
whereas the other terms decay at least polynomially fast. That is to say, $\RiskME(\psi) \lesssim \lipmax^2 (\constbb + \constcc )^2 
\big[ \log \big( \frac{np}{2} \big) \big]^{-\frac{2}{\beta}}$ as $mp, np \to \infty$.

The proof of Corollary \ref{coro:answer.2} can be found in Section \ref{sec:proof_ME}.

\section{Further Exposition of the Results on Noise Scenarios 1 and 2}	\label{sec:result_full}

We provide results on the other (easier) noise scenarios, arguing upper and lower bounds on the CDF estimation.

\subsection{On Scenario 1: Noiseless Setup}

\subsubsection{Upper Bounds on the Estimation Error}\label{sec:noiseless_upper}
In the noiseless setup, we can establish probabilistic tail bounds on the estimation error of $\hat{q}(j)$ for each $j \in [n]$ and $\breve{F}_i(z)$ 
for each $i \in [m]$ as presented in Proposition \ref{prop:quantile_noiseless} and Proposition \ref{prop:cdf_noiseless}, respectively.

\begin{proposition}\label{prop:quantile_noiseless}
For any $j \in [n]$ and for any $t \geq 0$,
\[	
	\Prob{ \left| \hat{q}(j) - \theta_{col}^{(j)} \right| \geq t ~\Big|~ \Big\{ \min_{i \in \cB^j} | \cB_i | = n_* \Big\}} 
		\leq 2 \exp\left( -2 n_* t^2 \right).
\]
\end{proposition}
\begin{proof}
Recall from Eq. \eqref{eqn:quantile_rowwise} that when conditioned on $\frow{i}$, the quantile of $j$ estimated 
from row $i$ is a function of $|\cB_i| = \sum_{j' = 1}^n M(i,j')$ many independent random variables, 
$H\big( Z(i, j) - Z(i, j') \big)$:
\[\hat{q}_i(j) = \frac{\sum_{j'=1}^n M(i,j') H\big( Z(i, j) - Z(i, j') \big) }{\sum_{j' = 1}^n M(i,j')}.\]
Since $H\big( Z(i, j_1) - Z(i, j_2) \big)$ takes value in $\{ 0, \frac{1}{2}, 1\}$, it satisfies the bounded difference condition. To be more specific, let's consider a perturbation on the column feature associated with one index. For any $j_0 \in [n]$, if $j_0 \in \cB_i$ (i.e., if $M(i, j_0) = 1$), then
\[
	\left|  \left.\hat{q}_i(j)\right|_{\fcol{j_0}=a} - \left.\hat{q}_i(j)\right|_{\fcol{j_0}=b} \right|	\leq \frac{1}{\left| \cB_i \right|},
\]
for any value $a, b \in [0,1]$, while if $j_0 \not\in \cB_i$ (i.e., if $M(i, j_0) = 0$), then obviously
\[
	\left| \left.\hat{q}_i(j)\right|_{\fcol{j_0} = a} - \left.\hat{q}_i(j)\right|_{\fcol{j_0} = b} \right| = 0. 
\]
		
 Since  $\Exp {\hat{q}_i(j)} = \fcol{j}$, we can achieve the following probabilistic tail bound by an application of McDiarmid's inequality 
\[	
	\Prob{ \left| \hat{q}_i(j) -  \fcol{j} \right| \geq t  ~\Big|~ |\cB_i| = n_i } 
		\leq 2 \exp\big( -2 n_i t^2 \big).
\]
According to \eqref{eqn:quantile_est}, we let $\hfcol{j} = \hat{q}_{i^*}(j)$ by choosing $i^* = i^*(j)$ uniformly at random from $\cB^j$. 
We obtain the desired inequality because $\min_{i \in \cB^j} | \cB_i | = n_* $ is assumed.

\end{proof}

\begin{proposition}\label{prop:cdf_noiseless}
	For any $i \in [m]$, let $\breve{F}_i$ be defined as in \eqref{eqn:ECDF_noiseless}. Then for any $t \geq 0$,
	\[	
		\Prob{\sup_{z \in \Reals} \big| \breve{F}_i(z) - F_i(z) \big| > t  ~\Big|~ |\cB_i| = n_i} \leq 2 \exp\big(-2 n_i t^2\big).	
	\]
\end{proposition}
\begin{proof}
The proof is a direct application of Dvoretzky-Kiefer-Wolfowitz inequality; see Lemma \ref{lem:DKW}. 
\end{proof}

Since $\breve{F}_i$ and $F_i$ are distribution functions, $\big| \breve{F}_i (z) - F_i(z) \big| \in [0,1]$ for all $z \in \Reals$. 
Also, we know that $\big| \breve{F}_i (z) - F_i(z) \big| = 0 $ for all $ z \not\in[-\fmax, \fmax]$. Therefore, for each $i \in [m]$,
\[
	\big\| \breve{F}_i - F_i \big\|_{L^2}^2 \leq \big( \fmax - \fmin \big) \big\| \breve{F}_i - F_i \big\|_{L^{\infty}}^2.
\]
This observation yields that for any $\delta > 0$,
\begin{align*}
	&\bbE\Big[ \sup_{i \in [m]} \big\| \breve{F}_i - F_i \big\|_{L^2}^2 \Big]\\
		&\qquad\leq \big( \fmax - \fmin \big) \bigg[ \delta^2 + \bbP\Big(\sup_{i \in [m]}\sup_{z \in \Reals} \big| \breve{F}_i(z) - F_i(z) \big| > \delta \Big) \bigg]\\
		&\qquad\leq \big( \fmax - \fmin \big) \Bigg[ \delta^2 + \bbP\bigg( \sup_{i \in [m]}\sup_{z \in \Reals} \big| \breve{F}_i(z) - F_i(z) \big| > \delta ~\bigg|~
			\Big\{ \min_{i \in [m]} |\cB_i| \geq \frac{np}{2} \Big\} \bigg) \Bigg] + \Prob{ \Big\{ \min_{i \in [m]} |\cB_i| < \frac{np}{2} \Big\} }\\
		&\qquad\leq \big( \fmax - \fmin \big) \Bigg[ \delta^2 + \sum_{i \in [m]} \bbP\bigg( \sup_{z \in \Reals} \big| \breve{F}_i(z) - F_i(z) \big| > \delta ~\bigg|~
			\Big\{ \min_{i \in [m]} |\cB_i| \geq \frac{np}{2} \Big\} \bigg) \Bigg] + \Prob{ \Big\{ \min_{i \in [m]} |\cB_i| < \frac{np}{2} \Big\} }\\
		&\qquad\leq \big( \fmax - \fmin \big) \bigg( \delta^2 + 2m \exp\big(-np \delta^2\big) + m \exp \Big( - \frac{np}{8} \Big) \bigg).
\end{align*}
By letting $\delta = \sqrt{ \frac{\log (mnp)}{np}}$, we can see that 
\begin{equation}\label{eqn:upper_bound.noiseless}
	\bbE\Big[ \sup_{i \in [m]} \big\| \breve{F}_i - F_i \big\|_{L^2}^2 \Big]  
		\leq \big( \fmax - \fmin \big) \bigg( \frac{ \log(mnp) + 2 }{np} + m  \exp \Big( - \frac{np}{8} \Big)  \bigg).
\end{equation}
We can conclude that $\bbE\big[ \sup_{i \in [m]} \| \breve{F}_i - F_i \|_{L^2}^2 \big] \lesssim \big( \fmax - \fmin \big)  \frac{ \log(mnp) }{np}$ as $np \to \infty$, 
assuming $m \leq \exp\big( \frac{np}{16} \big)$. We believe this upper bound on $m$ is an artifact of our analysis -- especially, resulting from 
naively taking the union bound over $i \in [m]$ -- and can be removed.

\subsubsection{Lower Bound on the Estimation Error}\label{sec:noiseless_lower}

Next, we argue that the rate obtained in \eqref{eqn:upper_bound.noiseless} is nearly optimal up to a logarithmic factor, 
based on the results from function approximation theory. Without loss of generality, we may assume $i=1$ by focusing only on 
estimating (the slice of) the latent function associated with the first row. Since there is no noise, our algorithm $\varphi$ can evaluate 
$g(\frow{1}, y)$ without error at points $y \in \{\fcol{j} : j \in [n], ~M(1, j) = 1\}$.

Now, we show that for any slice of true latent function $g_1:= g(\frow{1}, \cdot) : [0,1] \to \Reals$ and for any set of sampling points 
$y_1, \ldots, y_{{n_1}} \in [0,1]$, there exists an adversarial function $g_1^{\dagger}: [0,1] \to \Reals$ such that $g_1(y) = g_1^{\dagger}(y)$ 
for all $y \in \{y_1, \ldots, y_{n_1}\}$, yet $F_1 = \big(g_1\big)^{-1}$ and $F_1^{\dagger} = \big( g_1^{\dagger} \big)^{-1}$ are significantly 
different in the $L^2$ sense. This claim follows from a classical result in function approximation theory.

\begin{lemma}[a simplified version of Lemma 4.4 from \citet{Kudryavtsev1995}]\label{lem:Kudryatsev}
	There exists a universal constant $c$ such that for every ${n_1} \in \Nats$, and for any $y_1, \ldots, y_{{n_1}} \in [0,1]$, there exists a 
	$\delta$-Lipschitz function $h \in L^1[0,1] \cap C^{\infty}[0,1]$ for which 
	\begin{enumerate}
		\item	$h(y_i) = 0$, for all $i=1, \ldots, {n_1}$, and
		\item	$\left\| h \right\|_{L^2[0,1]} \geq c \frac{\delta}{\sqrt{{n_1}}}$.
	\end{enumerate}
\end{lemma}
Note that we may replace $[0,1]$ with any bounded interval $[\fmin, \fmax]$ with a conforming change in the constant $c$.  Suppose that 
$F_1 = \big( g_1 \big)^{-1}$ is $\big( \frac{3}{4\lipmax} + \frac{1}{4\lipmin}, \frac{1}{4\lipmax} + \frac{3}{4\lipmin} \big)$-biLipschitz and let 
$\delta = \frac{1}{4}\big( \frac{1}{\lipmin} - \frac{1}{\lipmax}\big)$. By Lemma \ref{lem:Kudryatsev}, there exists a $\delta$-Lipschitz (and $C^{\infty}$) 
function $h$ such that $h(z) = 0$ for all $z \in \big\{ g_1( \fcol{j} ): ~ j \in \cB_1 \big\}$ and $\left\| h \right\|_{L^2[\fmin, \fmax]} 
\geq c \frac{\delta}{\sqrt{|\cB_1|}}$. Observe that both $F_1$ and $F_1^{\dagger} = F_1 + h$ are $\big( \frac{1}{\lipmax}, \frac{1}{\lipmin} \big)$-biLipschitz, 
and hence, both $g_1$ and $g_1^{\dagger} = \big( F_1^{\dagger} \big)^{-1}$ are valid latent functions in our model.

Notice that there is no way for the algorithm (estimator) $\varphi$ to distinguish $F_1^{\dagger}$ from $F_1$ based on the data, 
$\big\{ Z(1,j): ~ j \in \cB_1 \big\} = \big\{ g_1\big( \fcol{j} \big): ~ j \in \cB_1 \big\}$. Therefore, $\varphi$ would return the same output 
$\hat{F_1}$ even when the true latent function is $g_1$ or it were replaced with $g_1^{\dagger}$ and 
\[
	\big\| F_1 - F_1^{\dagger} \big\|_{L^2[\fmin, \fmax]}^2 
		= \| h \|_{L^2[\fmin, \fmax]}^2 \geq \frac{c^2}{16} \big( \frac{1}{\lipmin} - \frac{1}{\lipmax}\big) \frac{1}{|\cB_1|}
\]
sets a lower bound on the estimation error of $\varphi$ because we may assume $\hat{F_1} = F_1$. By the law of total probability,
\begin{align}
	\bbE \big\| F_1 - F_1^{\dagger} \big\|_{L^2}^2
		&\geq \bbE \Big[ \big\| F_1 - F_1^{\dagger} \big\|_{L^2}^2 ~\big|~ |\cB_1| \geq \frac{np}{2} \Big] ~\Prob{|\cB_1| \geq \frac{np}{2}}	\nonumber\\
		&\geq \frac{c^2}{8} \big( \frac{1}{\lipmin} - \frac{1}{\lipmax}\big) \frac{1}{np} \bigg[ 1 - m \exp \Big( - \frac{np}{8} \Big) \bigg].
		\label{eqn:lower_bound.noiseless}
\end{align}

We summarize \eqref{eqn:upper_bound.noiseless} and \eqref{eqn:lower_bound.noiseless} as the following corollary.
\begin{corollary}\label{coro:noiseless}
	Let $\varphi: Z \mapsto \big(\breve{F}_1, \ldots, \breve{F}_m\big)$ denote an algorithm that estimates $F_1, \ldots, F_m$ where $\breve{F}_i$ is the 
	ECDF as described in \eqref{eqn:ECDF_noiseless}. Then 
	\begin{equation*}
		\RiskD(\varphi) 
			\leq 
				\big( \fmax - \fmin \big) \bigg( \frac{ \log(mnp) + 2 }{np} + m  \exp \Big( - \frac{np}{8} \Big)  \bigg).
	\end{equation*}	
	Now suppose that $\varphi$ is any algorithm that estimates $F_1, \ldots, F_m$ such that $\varphi$ estimates $F_i$ based only on $\{ Z(i',j) : ~i' = i \}$ 
	for each $i \in [m]$. Then there exists some constant $c > 0$, which depends only on $\fmin, \fmax$, such that
	\begin{equation*}
		\RiskD(\varphi) \geq  \frac{c^2}{8} \Big( \frac{1}{\lipmin} - \frac{1}{\lipmax}\Big) \frac{1}{np} \bigg[ 1 - m \exp \Big( - \frac{np}{8} \Big) \bigg].
	\end{equation*}
\end{corollary}


\subsection{On Scenario 2: Noisy Measurement Setup with Known Noise Distribution}

\subsubsection{Upper Bounds on the Estimation Error}\label{sec:noisy_known_upper}
In the noisy measurement setup, we can establish a probabilistic tail bound on the estimation error of $\hat{q}_{\marg}(j)$ for each $j \in [n]$ as 
the upper bound for the noiseless setup that can be found in Proposition \ref{prop:quantile_noiseless}. In fact,  we already presented our probabilistic 
tail upper bound for $\big| \hat{q}_{\marg}(j) - \fcol{j} \big| $ in Proposition \ref{prop:quantile_noisy}.

Here we present a proposition that sets up a tail bound on $ \big| \tilde{F}_i (z) - F_i(z) \big|$ for the noisy measurement setup with known 
noise distribution. Note that the setup is harder than the noiseless setup, but no harder than the noisy measurement setup with unknown noise 
distribution. We refer the reader to Proposition \ref{prop:cdf_noiseless} for the noiseless counterpart and Theorem \ref{thm:cdf_noisy_unknown} 
for the one for the unknown noise setup, respectively.

\begin{proposition}\label{prop:cdf_noisy_known}
	For $i \in [m]$, let $\tilde{F}_i$ be defined as in \eqref{eqn:ECDF_known_noise} with 
	$h = \left(4\gamma \right)^{\frac{1}{\beta}}\left( \log |\cB_i| \right)^{-\frac{1}{\beta}}$. Then for any $t > 0$, 
	\begin{align*}
		\Prob{\sup_{z \in [\fmin, \fmax]} \big|~ \tilde{F}_i (z) - F_i(z)~ \big| > t + \big(\constbb + \constcc \big) \left( \log n_i \right)^{-\frac{1}{\beta}} 
		 	~\Big|~ \big\{ |\cB_i| = n_i \big\}}
			&\leq 2 n_i^{\frac{1}{4}} \left( \log n_i \right)^{\frac{2}{\beta}}
				\exp\left( - \frac{ n_i^{\frac{1}{2}}  }{2 \constcc^2 \left( \log n_i \right)^{\frac{2}{\beta}} } t^2 \right).
	\end{align*}
\end{proposition}
The proof of Proposition \ref{prop:cdf_noisy_known} can be found in Appendix \ref{sec:proof_noisy_known_CDF}.

We derive an upper bound on $\bbE \big\| \tilde{F}_i - F_i \big\|_{L^2}^2$ as we have done in Section \ref{sec:noiseless_upper}.
Observe that $\big| \tilde{F}_i (z) - F_i(z) \big| \in [0,1]$ for all $z \in \Reals$ and that $\big| \tilde{F}_i (z) - F_i(z) \big| = 0 $ for all 
$z \not\in[-\fmax, \fmax]$ by definition of $\tilde{F}_i$. Therefore, for each $i \in [m]$,
\[
	\big\| \tilde{F}_i - F_i \big\|_{L^2}^2 \leq \big( \fmax - \fmin \big) \big\| \tilde{F}_i - F_i \big\|_{L^{\infty}}^2.
\]
With $\delta_0 := \big(\constbb + \constcc \big) \big[ \log (\frac{np}{2}) \big]^{-\frac{1}{\beta}}$, this observation yields that for any $\delta > 0$,
\begin{align*}
	&\bbE\Big[ \sup_{i \in [m]} \big\| \tilde{F}_i - F_i \big\|_{L^2}^2 \Big]\\
		&\qquad\leq \big( \fmax - \fmin \big) \bigg[  \big( \delta + \delta_0 \big)^2 + \bbP\Big(\sup_{i \in [m]}\sup_{z \in \Reals} \big| \tilde{F}_i(z) - F_i(z) \big| > \delta + \delta_0 \Big) \bigg]\\
		&\qquad\leq \big( \fmax - \fmin \big) \Bigg[ \big( \delta + \delta_0 \big)^2 + \bbP\bigg( \sup_{i \in [m]}\sup_{z \in \Reals} \big| \tilde{F}_i(z) - F_i(z) \big| > \delta + \delta_0 ~\bigg|~
			\Big\{ \frac{np}{2} \leq |\cB_i| \leq 2np, ~\forall i \in [m] \Big\} \bigg) \\
			&\qquad\qquad\qquad\qquad\qquad\quad + \Prob{ \Big\{ \frac{np}{2} \leq |\cB_i| \leq 2np, ~\forall i \in [m] \Big\}^c }\Bigg]\\
		&\qquad\leq \big( \fmax - \fmin \big) \Bigg[ \big( \delta + \delta_0 \big)^2 + \sum_{i \in [m]} \bbP\bigg( \sup_{z \in \Reals} \big| \tilde{F}_i(z) - F_i(z) \big| > \delta + \delta_0 ~\bigg|~
			\Big\{ \frac{np}{2} \leq |\cB_i| \leq 2np, ~\forall i \in [m] \Big\} \bigg)\\
			&\qquad\qquad\qquad\qquad\qquad\quad  + 2m \exp \Big( - \frac{np}{8} \Big)  \Bigg]\\
		&\qquad\leq \big( \fmax - \fmin \big) \Bigg[ \big( \delta + \delta_0 \big)^2 + 
			2 m (2np)^{\frac{1}{4}} \big[ \log (2np) \big]^{\frac{2}{\beta}}
				\exp\bigg( - \frac{ \big( \frac{np}{2}\big)^{\frac{1}{2}}  }{2 \constcc^2 \big[ \log (2np) \big]^{\frac{2}{\beta}} } \delta^2 \bigg)
			+ 2m \exp \Big( - \frac{np}{8} \Big) \Bigg].
\end{align*}
By letting $\delta = \frac{\constcc [ \log(2np) ]^{\frac{1}{\beta}} }{(np)^{\frac{1}{8}}}$, we can see that 
\begin{align}
	\bbE\Big[ \sup_{i \in [m]} \big\| \tilde{F}_i - F_i \big\|_{L^2}^2 \Big]  
		&\leq \big( \fmax - \fmin \big) \Bigg[ 
			2  \big(\constbb + \constcc \big)^2 \Big[ \log \Big(\frac{np}{2}\Big) \Big]^{-\frac{2}{\beta}} 
			+ 2 \frac{ \constcc^2 \big[ \log (2np) \big]^{\frac{2}{\beta}}}{(np)^{\frac{1}{4}}}
		\nonumber\\
		&\qquad\qquad\qquad\qquad\quad
			+ 2 m (2np)^{\frac{1}{4}} \big[ \log (2np) \big]^{\frac{2}{\beta}}
				\exp \Big( - \frac{ (np)^{\frac{1}{4}} }{2\sqrt{2}} \Big)
		 	+ 2m  \exp \Big( - \frac{np}{8} \Big)  \Bigg].	\label{eqn:upper_bound.noisy}
\end{align}
We can conclude that $\bbE\big[ \sup_{i \in [m]} \| \tilde{F}_i - F_i \|_{L^2}^2 \big] \lesssim 2( \fmax - \fmin ) (\constbb +\constcc)^2 
\big[ \log \frac{np}{2} \big]^{-\frac{2}{\beta}}$ as $np \to \infty$, assuming $m \leq \exp\big( \frac{ (np)^{\frac{1}{4}} }{4} \big)$. 
We believe this upper bound on $m$ is an artifact of our analysis -- especially, resulting from naively taking the union bound over $i \in [m]$ 
-- and can be removed.

\subsubsection{Lower Bound on the Estimation Error}\label{sec:noisy_known_lower}

Next, we argue that the rate obtained in \eqref{eqn:upper_bound.noisy} is nearly optimal up to a logarithmic factor, 
based on the hardness results from deconvolution literature. Without loss of generality, we may assume $i=1$ by focusing only on 
estimating (the slice of) the latent function associated with the first row. 

First, we recall that each slice of latent function, $g(\frow{i}, \cdot)~i\in[m]$ is interpreted as the inverse of a cumulative distribution function $F_i$ in this work. 
Moreover, $F_i$ admits the density $f_i$ such that $\frac{1}{\lipmax} \leq f_i(z) \leq  \frac{1}{\lipmin}$ for $z \in \big(g(\frow{i},0), g(\frow{i},1) \big) \subset 
[\fmin, \fmax]$. See Section \ref{sec:assumption_function} for more details about the bi-Lipschitzness model assumption.

Next, we define a class of probability densities parametrized by three parameters $d, C$, and $0 \leq \alpha < 1$, following \cite{Fan1991}:
\begin{equation}\label{eqn:fan.class}
	\cC_{d, \alpha, C}: = \left\{ f(x): \left| f^{(d)}(x) - f^{(d)}\left( x + \delta \right)\right| \leq C \delta^{\alpha} \right\},
\end{equation}
where $f^{(d)}$ denotes the $d$-th derivative of $f$. Now we introduce the following hardness result excerpted from \citet{Fan1991}. 
\begin{lemma}[a simplified version of Theorem 4 from \cite{Fan1991}]\label{lem:deconv_hard}
	Let $f \in \cC_{d, \alpha, C}$ and $T(f) = f^{(\lambda)}(x)$ for some $x \in \supp f$. 
	Suppose that $z_1, \ldots, z_n$ are samples drawn from $f$ under the noisy measurement model with supersmooth additive noise.  
	Then there is a universal constant $c > 0$ such that for any estimator $\hat{T}$ of $T(f)$, 
	\begin{equation}\label{eqn:lower_deconvolution}
		\sup_{f \in \cC_{d,\alpha, C}} \bbE \big( \hat{T} - T(f) \big)^2 > c \left( \log n \right)^{-\frac{2(d+\alpha - \lambda )}{\beta}}.
	\end{equation}
\end{lemma}

From the above observations, we can verify that for any valid latent function $g$ in our model, the derived density for all $i \in [m]$ satisfies 
$f_i \in \cC_{0,0, \frac{1}{\lipmin}}$ because $\frac{1}{\lipmax} \leq f_i(z) \leq  \frac{1}{\lipmin}$. As discussed in \cite{Fan1991}, (i)
one can estimate a CDF in the supersmooth case by `plugging-in' (integrating the estimated density), which corresponds to the case $\lambda=-1$ 
and (ii) no estimator can estimate the CDF faster than the rate in \eqref{eqn:lower_deconvolution} with $\lambda=-1$. We refer interested readers 
to see Eq. (2.7) and Theorem 6 of \cite{Fan1991} for the original discussion.

Let $T(f_1) = F_1$ and $\hat{T} = \tilde{F}_1$. We may assume\footnote{That is, we are considering the minimax bound, which provides the 
minimum squared $L^2$ error with respect to the maximally hard latent function instance, for a given estimator.} our lantent function achieves 
the lower bound in \eqref{eqn:lower_deconvolution}. Then
\begin{align}
	\bbE \big\| \tilde{F}_1 - F_1 \big\|_{L^1[\fmin, \fmax]}^2
		&\geq \bbE \Big[ \big\| \tilde{F}_1 - F_1 \big\|_{L^1[\fmin, \fmax]}^2 ~\Big|~ |\cB_1| \geq \frac{np}{2} \Big]	\Prob{|\cB_1| \geq \frac{np}{2} }
			\nonumber\\
		&= \bbE \bigg[ \int_{\fmin}^{\fmax} \big( \tilde{F}_1(z) - F_1(z) \big)^2 dz~\Big|~ |\cB_1| \geq \frac{np}{2} \bigg]	\Prob{|\cB_1| \geq \frac{np}{2} }
			\nonumber\\
		&= \Bigg( \int_{\fmin}^{\fmax} \bbE \Big[ \big( \tilde{F}_1(z) - F_1(z) \big)^2  ~\Big|~ |\cB_1| \geq \frac{np}{2} \Big]  dz \Bigg)	\Prob{|\cB_1| \geq \frac{np}{2} }
			\nonumber\\
		&> c \big( \fmax - \fmin \big) \Big[ \log \Big(\frac{np}{2}\Big) \Big]^{- \frac{2}{\beta}}\bigg[ 1 - \exp\Big( - \frac{np}{8} \Big) \bigg].
			\label{eqn:lower_bound.noisy}
\end{align}

We summarize \eqref{eqn:upper_bound.noisy} and \eqref{eqn:lower_bound.noisy} as the following corollary.
\begin{corollary}\label{coro:lower_noisy}
	Assume the noisy measurement setup with supersmooth additive noise. Let $\varphi: Z \mapsto \big(\tilde{F}_1, \ldots, \tilde{F}_m\big)$ 
	denote an algorithm that estimates $F_1, \ldots, F_m$ where $\tilde{F}_i$ is the kernel deconvolution estimator as described in 
	\eqref{eqn:ECDF_known_noise}. Then 
	\begin{align*}
		\RiskD(\varphi) 
			&\leq \big( \fmax - \fmin \big) \Bigg[ 
			2  \big(\constbb + \constcc \big)^2 \Big[ \log \Big(\frac{np}{2}\Big) \Big]^{-\frac{2}{\beta}} 
			+ 2 \frac{ \constcc^2 \big[ \log (2np) \big]^{\frac{2}{\beta}}}{(np)^{\frac{1}{4}}}\\
			&\qquad\qquad\qquad\qquad\quad
			+ 2 m (2np)^{\frac{1}{4}} \big[ \log (2np) \big]^{\frac{2}{\beta}}
				\exp \Big( - \frac{ (np)^{\frac{1}{4}} }{2\sqrt{2}} \Big)
		 	+ 2m  \exp \Big( - \frac{np}{8} \Big)  \Bigg].
	\end{align*}
	Now suppose that $\varphi$ is any algorithm that estimates $F_i$ based only on $\{ Z(i',j) : ~i' = i \}$ for each $i \in [m]$. Then 
	there exists $c > 0$ such that for any $\varphi$,
	\begin{equation*}
		\RiskD(\varphi) 
			\geq   c \big( \fmax - \fmin \big) \Big[ \log \Big(\frac{np}{2}\Big) \Big]^{- \frac{2}{\beta}}\bigg[ 1 - \exp\Big( - \frac{np}{8} \Big) \bigg].
	\end{equation*}
\end{corollary}

\section{Discussion}\label{sec:discussion}
\subsection{Summary of the Results}
In this work, we propose a matrix-based framework to tackle the hard problem of deconvolution with unknown noise distribution. 
Our framework subsumes the setup of deconvolution with repeated measurements as a special case, which has been suggested 
to reduce the hard deconvolution problem to the usual deconvolution problem with known noise. 

We propose a simple three-step algorithm (Algorithm \ref{alg:noisy_unknown}) and provide a non-asymptotic error analysis. 
Our algorithm first estimates the column features by (noisy) sorting and then estimates the noise density using the ranked column features 
(Algorithm \ref{alg:setT_main}), thereby retrieving the signal CDF that is equivalent to the inverse of the latent function in our model.

In the course of answering to our first main question about the possibility of reliably estimating $m$ distribution in the maximum $L^2$ 
norm sense (Question \ref{problem:deconv}), we prove that our algorithm estimates the noise density very well with high probability 
(Theorem \ref{thm:ensure_condition}) and estimates the signal CDFs with vanishing $L^{\infty}$ error with high probability 
(Theorem \ref{thm:cdf_noisy_unknown}). Consequently, we provide an upper bound on $\RiskD$ of our proposed algorithm, 
which effectively scales as $\big[ \log(np) \big]^{-\frac{2}{\beta}}$ when $mp, np \to \infty$ in Corollary \ref{coro:answer.1}. 
This upper bound matches the minimax lower bound of single CDF deconvolution with known noise distribution -- Corollary \ref{coro:lower_noisy} 
contains the lower bound.

\subsection{Interpretation of the Results}
First, our results reconfirm that with the aid of repeated measurements, deconvolution with unknown noise is no harder than deconvolution 
with known noise. Indeed, the stringent requirement of repeated measurements can be relaxed as our framework allows for simultaneous 
deconvolution of multiple CDFs as long as they have common monotonicity pattern with respect to a certain latent feature (not necessarily observable).

However, we do not think our results imply that deconvolution with unknown noise distribution is as easy as deconvolution with known 
noise distribution. Rather, they should be interpreted as deconvolution with repeated measurements is a substantially easier problem than 
deconvolution with unknown noise distribution. We further elaborate this point by considering the problem from matrix estimation perspective.

\subsection{Connection to Statistical Seriation}
Recall that we use the matrix structure to represent the measurements. In our model, we assumed only a $p \in (0,1]$ fraction out of total $mn$ 
entires of the matrix is available. Recall that we asked in Question \ref{problem:mat_est} whether we can efficiently estimate the total $mn$ numbers 
in the matrix using $mnp$ noisy data points in the matrix maximum norm sense. We answer to this question by separately estimating the CDF 
(inverse of the latent function) and the ranking (latent column feature) and our matrix estimation error is dominated by the error in CDF estimation. 
The resulting upper bound scales at the rate of $\big[ \log(np) \big]^{-\frac{2}{\beta}}$ when $mp, np \to \infty$, cf. Corollary \ref{coro:answer.2}.

In a recent work, the authors of \cite{flammarion2019optimal} consider a closely related problem, called the statistical seriation. In their model, 
they observe a matrix $Y \in \Reals^{m \times n}$ such that $Z = A^* \Pi^* + N$ where $\Pi^*$ is an $n \times n$ permutation matrix, $A^*$ is 
the parameter matrix that has monotone nondecreasing rows, and $N$ is a sub-gaussian noise matrix. They discuss the error rate of the 
least square estimator for estimating $A^* \Pi^*$ in the normalized squared Frobenius norm sense (cf. Corollary 3.4 in \cite{flammarion2019optimal}):
\begin{equation}\label{eqn:optimal_seriation}
	\frac{1}{mn} \big\| \hat{A} \hat{\Pi} - A^* \Pi^* \big\|_F^2 
		\lesssim \bigg( \frac{ (\fmax - \fmin) \sigma^2 \log n}{n} \bigg)^{\frac{2}{3}} + \sigma^2 \frac{ \log n}{\min\{ m, n \}}
\end{equation}
and argue that this rate is minimax optimal up to a log factor\footnote{To be fair, their optimality results extend beyond monotone matrices up to 
unimodal matrices. However, there is no known computationally efficient estimator for the general unimodal case so far, to the best of our knowledge.}.

Despite the optimality in the error rate, the least square estimator is not computationally tractable and hence, the authors of \cite{flammarion2019optimal} 
propose a computationally efficient alternative estimator for the monotonic case. The efficient algorithm sorts the columns to estimate $\Pi^*$ by 
scoring them in a similar manner as we did, and then estimate $A^*$ by solving a least square problem. They show this estimator achieves 
the same error rate (cf. Theorem 4.1 in \cite{flammarion2019optimal}).

We conjecture that deconvolution with repeated measurements can attain a polynomial error rate instead of the current logarithmic rate due to 
the connection with the statistical seriation problem. Suppose that we can strengthen the result of \cite{flammarion2019optimal}; that is, suppose that 
it is possible to solve the statistical seriation problem (1) with a similar error rate as in \eqref{eqn:optimal_seriation} in the max norm sense, (2) based on 
a partially observed $Z$. Then after solving the seriation problem, we have $\hat{A} \hat{\Pi}$ at our disposal. The $n$ number of entries in the 
$i$-th row of $\hat{A} \hat{\Pi}$ form a set of `denoised' samples with a residual error upper bounded by the max norm error bound. Now most of the 
original sub-gaussian noise in each sample is peeled off and there remains only a small error that decays to $0$ at a polynomial rate of $n$. Therefore, 
the empirical CDF constructed from the $n$ points in the $i$-th row of $\hat{A} \hat{\Pi}$ well approximates the `pure' ideal empirical CDF with no noise at all. 
The ideal empirical CDF is uniformly close to the true CDF in accordance with Proposition \ref{prop:cdf_noiseless} (or see Dvoretzky-Kiefer-Wolfowitz inequality; 
Lemma \ref{lem:DKW}) and therefore, the empirical CDF based on $\hat{A} \hat{\Pi}$ will be a good uniform approximation of $F_i$. It could be an interesting 
direction of future research to rigorously investigate the validity of this argument.

\bibliographystyle{IEEEtran}
\bibliography{seriation_deconvolution_bibliography}
\newpage

\appendix
\onecolumn
\section{Prelude to the Proof of Theorem \ref{thm:cdf_noisy_unknown}: Proof of Proposition \ref{prop:cdf_noisy_known}}\label{sec:proof_noisy_known_CDF}
In this section, we prove Proposition \ref{prop:cdf_noisy_known} to show that $\tilde{F}_i$ is close to $F_i$ in the $L^{\infty}$ sense. 
En route to the proof of Proposition \ref{prop:cdf_noisy_known}, we establish two helper lemmas. Specifically, Lemma \ref{lem:mean_difference_tilde} presented in 
Section \ref{sec:known_bias} asserts that the bias of the estimator $\tilde{F}_i$ is small and Lemma \ref{lem:sup_tilde} in Section \ref{sec:known_variance} 
provides a uniform control over the variance of $\tilde{F}_i$. With aid of these two helper lemmas, we prove Proposition \ref{prop:cdf_noisy_known} in 
Section \ref{sec:proof_cdf_known}.


\subsection{Support Lemma to Control the Bias of $\tilde{F}_i$}\label{sec:known_bias}
\begin{lemma}\label{lem:mean_difference_tilde} 
	For $i \in [m]$, let $\tilde{F}_i$ be defined as in \eqref{eqn:ECDF_known_noise}. Then there exists a constant $\constbb = \constbb(\lipmin) > 0$ 
	such that
	\[	\sup_{z \in \Reals}\left| ~ \Exp{\tilde{F}_i (z)} - F_i(z) \right| \leq \constbb \left( \log \left| \cB_i \right| \right)^{-\frac{1}{\beta}},\qquad\forall i \in [m].	\]
\end{lemma}
Note that the expectation in the lemma statement is taken with respect to the randomness in data generation process as described in Section \ref{sec:model}.
	
\begin{proof}
Recall that $F_i$ is the inverse function of a slice $g(\frow{i}, \cdot)$ of the latent function $g$ at the fixed row feature $\frow{i}$ in our model. 
Since $F_i$ is $(\frac{1}{\lipmax}, \frac{1}{\lipmin})$-biLipschitz by the model assumption, it admits probability density $f_i$ such that 
$\frac{1}{\lipmax} \leq f_i (z) \leq \frac{1}{\lipmin}$ for all $z \in \supp f_i$ (and $f_i(z) = 0$ outside the support). Therefore, for all $i \in [m]$, 
$f_i$, the density corresponding to $F_i$ belongs to Fan's density class \cite{Fan1991}
\[	\cC_{m, \alpha, B} = \left\{ f(x): \left| f^{(m)}(x) - f^{(m)}\left( x + \delta \right)\right| \leq B \delta^{\alpha} \right\},		 \]
with $m = 0, \alpha = 0$, and $B = \frac{1}{\lipmin}$. Here, $f^{(m)}$ denotes the $m$-th derivative of $f$.

Therefore, we can conclude that for any $i \in [m]$,
\begin{align*}
	\sup_{z \in \Reals}\Big|~ \Exp{\tilde{F}_i (z)} - F_i(z) \Big| 
		&\stackrel{(a)}{\leq}	\sup_{z \in \Reals} \Exp{\left( \tilde{F}_i (z) - F_i (z) \right)^2}^{\frac{1}{2}}\\
		&\leq \sup_{f \in \cC_{0,0, \frac{1}{\lipmin}}} \sup_{z \in \Reals} \Exp{\left( \tilde{F}_{\left|\cB_i\right|} (z) - F(z) \right)^2}^{\frac{1}{2}}\\
		&\stackrel{(b)}{=} \cO\left( \left( \log \left| \cB_i \right| \right)^{-\frac{1}{\beta}} \right),
\end{align*}
where $\tilde{F}_{\left|\cB_i\right|}$ denotes an estimate of $F$ obtained from $\left|\cB_i\right|$ number of samples. Here, (a) follows from the observation that 
\begin{align*}
	\Big|~ \Exp{\tilde{F}_i (z)} - F_i(z) \Big|	
		&=  \Big|~ \Exp{\tilde{F}_i (z) - F_i (z)} \Big|
		\leq \Exp{\left( \tilde{F}_i (z) - F_i (z) \right)^2}^{\frac{1}{2}}	
\end{align*}
and (b) is the result of Theorem \ref{thm:Fan2} (originally Theorem 3 of \cite{Fan1991}).

Actually the upper bound is uniformly valid over all possible realizations of $\frow{i} \in [0,1]$ because Fan's original result holds uniformly over the whole class 
$\cC_{0,0, \frac{1}{\lipmin}}$. We also observe that the constant hidden in the big O notation is dependent only on the class $\cC_{0,0, \frac{1}{\lipmin}}$, 
hence, only on the model parameter $\lipmin$. Therefore, we can explicitly introduce a constant $\constbb = \constbb(\lipmin)$.
\end{proof}

\subsection{Support Lemmas to Control the Variance of $\tilde{F}_i$}\label{sec:known_variance}
First, we introduce Lemma \ref{lem:concentration_tilde} to control the variance of $\tilde{F}_i$ at a single point and then refine it to Lemma \ref{lem:sup_tilde} 
by the usual $\eps$-net argument to obtain a uniform control over the entire support of $f_i$
	
\begin{lemma}\label{lem:concentration_tilde}
	For $i \in [m]$, let $\tilde{F}_i$ be defined as in \eqref{eqn:ECDF_known_noise} with 
	$h = \left(4\gamma \right)^{\frac{1}{\beta}}\left( \log |\cB_i| \right)^{-\frac{1}{\beta}}$. Then for any $t > 0$, 
	\begin{align*}
		\mathbb{P} \bigg( \left| \tilde{F}_i(z) - \Exp{\tilde{F}_i(z)} \right| \geq t ~ \bigg| ~ | \cB_i | = \numi \bigg)	
			&\leq 2\exp\left( - \frac{ \numi^{\frac{1}{2}} }{2\constcc^2 \left( \log \numi \right)^{\frac{2}{\beta}} } t^2	 \right).
	\end{align*}
\end{lemma}
	
\begin{proof} [Proof of Lemma \ref{lem:concentration_tilde}]
First, we observe that when conditioned on $\frow{i}$, the kernel smoothed ECDF $\tilde{F}_i$ evaluated at $z$ is a function of $\left| \cB_i \right|$ 
independent random variables $\{Z(i, j)\}_{j \in \cB_i}$. That is, when $z$ is fixed, $\tilde{F}_i(z): \Reals^{\left| \cB_i \right|} \to \Reals$ such that
\begin{align*}
	\tilde{F}_i(z)\left[ Z(i,j_1), \ldots, Z(i,j_{\left| \cB_i \right|}) \right] 
		& = \int_{\fmin}^{z \wedge \fmax} \frac{1}{h \left| \cB_i \right|} \sum_{j\in \cB_i} L \left( \frac{w- Z(i,j)}{h} \right) dw,	
\end{align*}
where $L(z) = \frac{1}{2\pi} \int e^{-\img tz} \frac{\phi_K(t)}{\phi_N\left(\frac{t}{h}\right)}dt$ and $h$ is the bandwidth parameter for kernel $K$. 

Next, we show that $\tilde{F}_i(z)$ satisfies the bounded difference condition (see Eq. \eqref{eqn:bounded_difference}). 
Let $\zeta^{n_i} = (\zeta_1, \ldots, \zeta_{n_i})$ and $\zeta^{n_i}_j = (\zeta_1, \ldots, \zeta_j', \ldots, \zeta_{n_i})$ be two 
$n_i$-tuples of real numbers, which differ only at the $j$-th position. Then
\begin{align}
	\Big| \tilde{F}_i(z)[\zeta^{n_i}] - \tilde{F}_i(z)[\zeta^{n_i}_j] \Big|
		&= \Bigg| \frac{1}{h n_i} \int_{\fmin}^{z \wedge \fmax}  L \left( \frac{w- \zeta_j}{h} \right) - L \left( \frac{w- \zeta'_j}{h} \right) dw \Bigg|		\nonumber\\
		&= \Bigg| \frac{1}{h n_i} \int_{\fmin}^{z \wedge \fmax}  \frac{1}{2\pi} \int \Big(e^{-\img t \frac{w- \zeta_j}{h} } - e^{-\img t \frac{w- \zeta'_j}{h} }\Big) 
			\frac{\phi_K(t)}{\phi_N\left(\frac{t}{h}\right)}dt dw \Bigg|		\nonumber\\
		&\leq \frac{1}{2\pi h n_i} \int_{\fmin}^{z \wedge \fmax}  \int \Big|e^{-\img t \frac{w- \zeta_j}{h} } - e^{-\img t \frac{w- \zeta'_j}{h} }\Big| 
			\left|\frac{\phi_K(t)}{\phi_N\left(\frac{t}{h}\right)} \right| dt dw.	\label{eqn:difference_intermediate}
\end{align}
We make three observations to further simplify \eqref{eqn:difference_intermediate}:
\begin{itemize}
\item
Since $\big| e^{-\img tz} \big| = 1$ for any real numbers $t$ and $z$, we have 
\begin{align*}
	\Big|e^{-\img t \frac{w- \zeta_j}{h} } - e^{-\img t \frac{w- \zeta'_j}{h} }\Big| 
		&\leq \Big|e^{-\img t \frac{w- \zeta_j}{h} }\Big| + \Big| e^{-\img t \frac{w- \zeta'_j}{h} }\Big|  = 2.
\end{align*}
\item
Also, we have $\left| \phi_N\left(\frac{t}{h}\right) \right| \geq B^{-1} \exp\left( -\gamma \left|\frac{t}{h}\right|^{\beta}\right)$, from the 
supersmoothness assumption on the noise, cf. \eqref{eqn:model_supersmooth}. 
\item	
Recall that we choose\footnote{In fact, this choice is made following Fan \cite{Fan1991}; see Theorems \ref{thm:Fan1}, \ref{thm:Fan2}.} 
$h = \left(4\gamma \right)^{\frac{1}{\beta}}\left( \log n_i \right)^{-\frac{1}{\beta}}$ in the algorithm description in Section \ref{sec:alg_noisy_known}. 
\end{itemize}

Combining these observations with \eqref{eqn:difference_intermediate}, we have
\begin{align*}
	\Big| \tilde{F}_i(z)[\zeta^{n_i}] - \tilde{F}_i(z)[\zeta^{n_i}_j] \Big|
		&\leq \frac{\left( \log n_i \right)^{\frac{1}{\beta}}}{2\pi \left( 4\gamma \right)^{\frac{1}{\beta}} n_i} \int_{\fmin}^{z \wedge \fmax}  
			\int_{-1}^{1} 2 B \kmax \exp\left( \frac{1}{4} \left|t \right|^{\beta} \log n_i\right) dt dw\\
		&\leq \frac{ B \kmax \left( \log n_i \right)^{\frac{1}{\beta}}}{\pi \left( 4\gamma \right)^{\frac{1}{\beta}} n_i} \int_{\fmin}^{z \wedge \fmax} 
			\left( 1 - (-1) \right) \max_{t \in [-1,1]}\exp\left( \frac{1}{4} \left|t \right|^{\beta} \log n_i\right) dw\\
		&= \frac{ B \kmax \left( \log n_i \right)^{\frac{1}{\beta}}}{\pi \left( 4\gamma \right)^{\frac{1}{\beta}} n_i}  
			\left( \left(z  \wedge \fmax \right) - \fmin \right) 2 n_i^{\frac{1}{4}}\\
		&\leq  \frac{ 2B \kmax (\fmax - \fmin) \left( \log n_i \right)^{\frac{1}{\beta}}}{\pi \left( 4\gamma \right)^{\frac{1}{\beta}} n_i^{\frac{3}{4}}}\\
		&=  \frac{ 2\constdd \left( \log n_i \right)^{\frac{1}{\beta}}}{n_i^{\frac{3}{4}}},
\end{align*}
for any $z \in [\fmin, \fmax]$. In other words, the bounded difference condition is established for any fixed $z \in [\fmin, \fmax]$.

Applying McDiarmid's inequality (Lemma \ref{lem:McDiarmid}), we can conclude that for any $t > 0$,
\begin{align*}
	\mathbb{P} \bigg( \left| \tilde{F}_i(z)[\zeta^{n_i}] - \bbE_{\zeta^{n_i}}{\tilde{F}_i(z)[\zeta^{n_i}]} \right| \geq t \bigg)
		&\leq 2\exp\left( - \frac{ n_i^{\frac{1}{2}} }{2\constcc^2 \left( \log n_i \right)^{\frac{2}{\beta}}} t^2	 \right).		
\end{align*}
\end{proof}

We want to uniformly control the variance over all $z \in [\fmin, \fmax]$. Applying the $\varepsilon$-net argument, we obtain the following lemma 
as a corollary of Lemma \ref{lem:concentration_tilde}. For succinct representation of the result, we define a function $\textrm{Res}: [n] \to \Reals$ as
\begin{equation}\label{eqn:resolution}
	\res{k} = \constcc  k^{\frac{1}{4}} \left( \log k \right)^{\frac{1}{\beta}}.
\end{equation}

\begin{lemma}\label{lem:sup_tilde}
	For $i \in [m]$, let $\tilde{F}_i$ be defined as in \eqref{eqn:ECDF_known_noise} with 
	$h = \left(4\gamma \right)^{\frac{1}{\beta}}\left( \log |\cB_i| \right)^{-\frac{1}{\beta}}$. Then for any positive integer 
	$\Nnet$ and for any $t > 0$,
	\begin{align*}
		\Prob{ \sup_{z \in [\fmin, \fmax]} \left|~ \tilde{F}_i(z) - \Exp{\tilde{F}_i(z)} ~\right| \geq t  + \frac{\res{\numi}}{\Nnet} ~ \bigg| ~ |\cB_i | = \numi }
		&\leq 	 2 \Nnet \exp\left( - \frac{ \numi^{\frac{1}{2}}  }{2 \constcc^2 \left( \log \numi \right)^{\frac{2}{\beta}} } t^2 \right).
\end{align*}
\end{lemma}

\begin{proof} [Proof of Lemma \ref{lem:sup_tilde}]
First, we discretize the interval interval $[\fmin, \fmax]$ by constructing an $\varepsilon$-net. For any integer $\Nnet \geq 1$, define the set
\[	\cT_{\Nnet} := \left\{ \fmin + \frac{2k - 1}{2 \Nnet} ( \fmax - \fmin ),~\forall k \in [\Nnet] \right\}.	\]
Then for any $\Nnet > 0$, $\cT_{\Nnet} \subset [\fmin, \fmax]$ and it forms a $\frac{( \fmax - \fmin )}{2\Nnet}$-net with $\left| \cT_{\Nnet} \right| = \Nnet$, 
i.e., for any $z \in \left[\fmin, \fmax\right]$, there exists $k \in [\Nnet]$ such that $\left| z - \frac{2k - 1}{2\Nnet} ( \fmax - \fmin ) \right| 
\leq \frac{\left( \fmax - \fmin \right)}{2\Nnet}$.

Next, we observe that
\begin{align*}
	\left\| \tilde{f}_i \right\|_{\infty} 	
		&= \bigg\| \frac{1}{h \left| \cB_i \right|} \sum_{j \in \cB_i} L \left( \frac{z - Z(i,j)}{h} \right) \bigg\|_{\infty} \\
		&\leq \frac{1}{h} \left\| L \right\|_{\infty}\\
		&= \frac{1}{2\pi h} \left\| \int_{-\infty}^{\infty} e^{-\img tz} \frac{\phi_K(t)}{\phi_N\left( \frac{t}{h} \right)} dt  \right\|_{\infty}
		\leq \frac{1}{2\pi h} \int_{-\infty}^{\infty} \left| e^{-\img tz} \frac{\phi_K(t)}{\phi_N\left( \frac{t}{h} \right)} \right| dt\\
		&\leq \frac{1}{2\pi h} \int_{-1}^1 \frac{\kmax}{B^{-1} \exp \left(-\gamma \left| \frac{t}{h} \right|^{\beta} \right)} dt
		\leq \frac{B \kmax \left( \log \left| \cB_i \right| \right)^{\frac{1}{\beta}}}{2\pi \left( 4\gamma \right)^{\frac{1}{\beta}} } 
			\int_{-1}^1 \exp\left( \frac{1}{4} \left| t \right|^{\beta} \log \left| \cB_i \right| \right) dt	\\
		&\leq	 \frac{ B \kmax \left( \log \left| \cB_i \right| \right)^{\frac{1}{\beta}}}{2\pi \left( 4\gamma \right)^{\frac{1}{\beta}} } \int_{-1}^1  \left| \cB_i \right|^{\frac{1}{4}} dt\\
		&=	\frac{ B \kmax }{\pi \left( 4\gamma \right)^{\frac{1}{\beta}} }  \left| \cB_i \right|^{\frac{1}{4}} \left( \log \left| \cB_i \right| \right)^{\frac{1}{\beta}} \\
		&= \frac{ \res{|\cB_i|} }{\fmax - \fmin}.
\end{align*}

When conditioned on $|\cB_i| = \numi$, this upper bound is universal for all realization of $N, M$. Therefore, when conditioned on $|\cB_i| = \numi$,
$\big\| \bbE\big[ \tilde{f}_i \big] \big\|_{\infty} \leq \frac{ \res{\numi} }{\fmax - \fmin}$, too. By triangle inequality, 
$\big\| \tilde{f}_i - \bbE[ \tilde{f}_i ] \big\|_{\infty} \leq \frac{ 2 \res{\numi} }{\fmax - \fmin}$ 
and it follows from the definition of $\tilde{F}_i$ (see \eqref{eqn:ECDF_known_noise}) that 
\begin{align*}
	\sup_{z \in [\fmin, \fmax]} \bigg| \tilde{F}_i(z) - \Exp{\tilde{F}_i(z)} \bigg| 
		&\leq \sup_{z \in \cT_{\Nnet}} \bigg| \tilde{F}_i(z) - \Exp{\tilde{F}_i(z)} \bigg| +  \frac{ 2 \res{\numi} }{\fmax - \fmin} \frac{ \fmax - \fmin }{ 2 \Nnet}\\
		&= \sup_{z \in \cT_{\Nnet}} \bigg| \tilde{F}_i(z) - \Exp{\tilde{F}_i(z)} \bigg| +  \frac{ \res{\numi} }{ \Nnet}.
\end{align*}
Therefore, if $\left| \tilde{F}_i(z) - \Exp{\tilde{F}_i(z)} \right| \leq \varepsilon$ for all $z \in \cT_n$, the supremum over the whole domain is also bounded above 
by $\eps$, up to an additional discretization error term, $ \frac{ \res{\numi} }{ \Nnet}$. That is to say,  
\[	
	\sup_{z \in \cT_n } \left|~ \tilde{F}_i(z) - \Exp{\tilde{F}_i(z)} ~\right| \leq \varepsilon
	\qquad\implies\qquad
	\sup_{z \in [\fmin, \fmax]} \left|~ \tilde{F}_i(z) - \Exp{\tilde{F}_i(z)} ~\right| \leq \varepsilon + \frac{\res{\numi} }{\Nnet}.	
\]
Applying the union bound on the contraposition of the previous statement yields the conclusion: for any $t > 0$,
\begin{align*}
	\Prob{ \sup_{z \in [\fmin, \fmax]} \left|~ \tilde{F}_i(z) - \Exp{\tilde{F}_i(z)} ~\right| \geq t  + \frac{\res{\numi}}{\Nnet} ~ \bigg| ~ |\cB_i | = \numi }
		&\leq \Prob{ \sup_{z \in \cT_{\Nnet}} \left|~ \tilde{F}_i(z) - \Exp{\tilde{F}_i(z)} ~\right| \geq t }\\
		&\leq \sum_{z \in \cT_{\Nnet}} \Prob{ \left|~ \tilde{F}_i(z) - \Exp{\tilde{F}_i(z)} ~\right| \geq t }\\
		&\leq  2 \Nnet \exp\left( - \frac{ \numi^{\frac{1}{2}} }{2 \constcc^2 \left( \log \numi \right)^{\frac{2}{\beta}} } t^2  \right).
\end{align*}		
\end{proof}

\subsection{Completing the Proof of Proposition \ref{prop:cdf_noisy_known}}\label{sec:proof_cdf_known}

\begin{proof}[Proof of Proposition \ref{prop:cdf_noisy_known}]
We put Lemma \ref{lem:mean_difference_tilde} and Lemma \ref{lem:sup_tilde} together by applying the union bound. Notice that 
\[
	\sup_{z \in [\fmin, \fmax]} \big| \tilde{F}_i (z) - F_i(z) \big|	\leq 
		\sup_{z \in [\fmin, \fmax]} \big| \tilde{F}_i (z) - \bbE[ \tilde{F}_i (z) ] \big| 
		+ \sup_{z \in [\fmin, \fmax]} \big| \bbE[ \tilde{F}_i (z) -  F_i (z) ] \big|
\]
by triangle inequality. Therefore, for any $\delta_1, \delta_2 > 0$, 
\begin{align*}
	&\Prob{ \sup_{z \in [\fmin, \fmax]} \big| \tilde{F}_i (z) - F_i(z) \big| > \delta_1 + \delta_2 ~\Big|~ |\cB_i| = \numi }\\
		&\qquad\leq \Prob{ \sup_{z \in [\fmin, \fmax]} \big| \tilde{F}_i (z) - \bbE[ \tilde{F}_i (z) ] \big|  > \delta_1 ~\Big|~ |\cB_i| = \numi }
		+ \Prob{\sup_{z \in [\fmin, \fmax]} \big| \bbE[ \tilde{F}_i (z) -  F_i (z) ] \big|  > \delta_2 ~\Big|~ |\cB_i| = \numi }.
\end{align*}

Specifically, we choose $\delta_1 = t  + \frac{\res{\numi}}{\Nnet}$ with $\Nnet = \numi^{\frac{1}{4}} \left( \log \numi \right)^{\frac{2}{\beta}}$
and $\delta_2 = \constbb \left( \log \numi \right)^{- \frac{1}{\beta}}$. Note that $\frac{\res{\numi}}{\Nnet} 
= \constcc \left( \log \numi \right)^{-\frac{1}{\beta}}$. Therefore, for any $t > 0$,
\begin{align*}
	&\Prob{\sup_{z \in [\fmin, \fmax]} \big|~ \tilde{F}_i (z) - F_i(z)~ \big| > t + \big(\constbb + \constcc \big) \left( \log \numi \right)^{-\frac{1}{\beta}} ~\bigg| ~ | \cB_i | = \numi }\\
		&\qquad \leq \Prob{ \sup_{z \in [\fmin, \fmax]} \big| \tilde{F}_i (z) - \bbE[ \tilde{F}_i (z) ] \big|  > t  + \constcc \left( \log \numi \right)^{-\frac{1}{\beta}} ~\Big|~ |\cB_i| = \numi }\\
		&\qquad \quad+  \Prob{\sup_{z \in [\fmin, \fmax]} \big| \bbE[ \tilde{F}_i (z) -  F_i (z) ] \big|  >  \constbb \left( \log \numi \right)^{- \frac{1}{\beta}} ~\Big|~ |\cB_i| = \numi }\\
		&\qquad\leq 2 \numi^{\frac{1}{4}} \left( \log \numi \right)^{\frac{2}{\beta}}
			\exp\left( - \frac{ \numi^{\frac{1}{2}} }{2 \constcc^2 \left( \log \numi \right)^{\frac{2}{\beta}} } t^2 \right).
\end{align*}
\end{proof}

\section{Proof of Theorem \ref{thm:cdf_noisy_unknown} }\label{sec:proof_noisy_unknown_CDF}
In this section, we prove Theorem \ref{thm:cdf_noisy_unknown} in a similar fashion as in Section \ref{sec:proof_noisy_known_CDF}. 
For the purpose, we separately control the bias and the variance of $\hat{F}_i$ with Lemmas \ref{lem:bias_hat_conditioned} and \ref{lem:sup_hat}, 
respectively. 

In \ref{sec:bias_unknown}, we present and prove Lemmas \ref{lem:bias_hat_conditioned}. The goal of Lemmas \ref{lem:bias_hat_conditioned} is 
in establishing a uniform upper bound on $\Exp{\hat{F}_i(z) - \tilde{F}_i(z) }$ conditioned on that a reliable estimate of $\phi_N$ is available, 
which is ensured to be the case with high probability by Theorem \ref{thm:ensure_condition}. Then in Section \ref{sec:variance_unknown}, 
we prove Lemma \ref{lem:sup_hat} by the same logic with which we prove Lemma \ref{lem:sup_tilde}, i.e., by refining the concentration inequality 
presented in Lemma \ref{lem:concentration_hat} with the $\eps$-net argument. Lastly, we conclude the section with a proof of Theorem \ref{thm:cdf_noisy_unknown} presented in Section \ref{sec:proof_unknown}.

\subsection{Support Lemmas to Control the Bias of $\hat{F}_i$}\label{sec:bias_unknown}
In this section, we argue that $\bbE \hat{F}_i(z)$ is uniformly close to $\bbE\tilde{F}_i(z)$ over $z \in \Reals$. 
	
\begin{lemma}\label{lem:hat_tilde}
	For $i \in [m]$, let $\tilde{F}_i$ be defined as in \eqref{eqn:ECDF_known_noise} and $\hat{F}_i$ be defined as in 
	\eqref{eqn:ECDF_unknown_noise} with the kernel bandwidth $h$ and the ridge parameter $\rho$. Then
	\begin{equation}\label{eqn:term.upper}
		\bigg| \sup_{z \in \Reals} \Exp{\hat{F}_i(z) - \tilde{F}_i(z) }\bigg|
			\leq	\frac{\kmax(\fmax - \fmin)}{ \pi h } \max_{t \in [-1, 1]} 
			\Bigg| ~\bbE\Bigg[ \frac{ \phi_N(\frac{t}{h}) -\big[ \hat{\phi}_{N,i}(\frac{t}{h}) + \rho \big]}{ \hat{\phi}_{N,i}(\frac{t}{h}) + \rho } \Bigg]~ \Bigg|.
	\end{equation}
\end{lemma}
	
\begin{proof}
First, we observe from the definition of $\tilde{F}_i$ (see \eqref{eqn:ECDF_known_noise}) and $\hat{F}_i$ (see \eqref{eqn:ECDF_unknown_noise}) that given $i \in [m]$,
\begin{align}
	\hat{F}_i(z) - \tilde{F}_i(z)
		&= \int_{\fmin}^{z \wedge \fmax} \hat{f}_i(w) - \tilde{f}_i(w) dw	\nonumber\\
		&= \int_{\fmin}^{z \wedge \fmax} \frac{1}{ h |\cB_i|} \sum_{j \in \cB_i}
			\bigg[ \hat{L} \left( \frac{w - Z(i,j)}{h} \right) - L \left( \frac{w - Z(i,j)}{h} \right) \bigg] dw	\nonumber\\
		&= \frac{1}{2\pi h |\cB_i|} 
		\Bigg( \int_{\fmin}^{z \wedge \fmax} \sum_{j \in \cB_i} \int_{-\infty}^{\infty} e^{-\img t \frac{w - Z(i,j)}{h}} \left[ \frac{\phi_K(t)}{\hat{\phi}_{N,i}(\frac{t}{h}) + \rho} - \frac{\phi_K(t)}{\phi_N(\frac{t}{h}) } \right] dt~dw \Bigg).		\label{eqn:term.integral}
\end{align}

Now we let $\Flatent = \{ \frow{1}, \ldots, \frow{m}, \fcol{1}, \ldots, \fcol{n} \}$ denote the latent variables and consider the expectation of 
\eqref{eqn:term.integral}. Note that we can exchange the order of integrals and the expectation in \eqref{eqn:term.integral} because the support of 
$\phi_K$ is contained in $[-1,1]$ and the integrand is a bounded continuous function:
\begin{align}
	&\Exp{ \hat{F}_i(z) - \tilde{F}_i(z)  }\nonumber\\
		&= \frac{1}{2\pi h |\cB_i|} 	\int_{\fmin}^{z \wedge \fmax} \bbE\Bigg[ \sum_{j \in \cB_i} \int_{-\infty}^{\infty} 
			e^{-\img t \frac{w - Z(i,j)}{h}} \phi_K(t) \frac{ \phi_N(\frac{t}{h}) -\big[ \hat{\phi}_{N,i}(\frac{t}{h}) + \rho \big]}{\phi_N(\frac{t}{h}) \big[ \hat{\phi}_{N,i}(\frac{t}{h}) + \rho \big]}dt \Bigg] dw	\nonumber\\
		&= \frac{1}{2\pi h |\cB_i|} 	\int_{\fmin}^{z \wedge \fmax} \sum_{j \in \cB_i} \bbE\Bigg[ \int_{-\infty}^{\infty} e^{-\img t \frac{w - Z(i,j)}{h}} \phi_K(t) 
			\frac{ \phi_N(\frac{t}{h}) -\big[ \hat{\phi}_{N,i}(\frac{t}{h}) + \rho \big]}{\phi_N(\frac{t}{h}) \big[ \hat{\phi}_{N,i}(\frac{t}{h}) + \rho \big]}dt  \Bigg] dw	\nonumber\\
		&=\frac{1}{2\pi h |\cB_i|} 	 \int_{\fmin}^{z \wedge \fmax} \sum_{j \in \cB_i} \int_{-\infty}^{\infty} \bbE \Bigg[ e^{-\img t \frac{w - Z(i,j)}{h}} \phi_K(t) 
			\frac{ \phi_N(\frac{t}{h}) -\big[ \hat{\phi}_{N,i}(\frac{t}{h}) + \rho \big]}{\phi_N(\frac{t}{h}) \big[ \hat{\phi}_{N,i}(\frac{t}{h}) + \rho \big]} \Bigg] dt~dw	\nonumber\\
		&=\frac{1}{2\pi h |\cB_i|} 	 \int_{\fmin}^{z \wedge \fmax} \sum_{j \in \cB_i} \int_{-\infty}^{\infty} \bbE_{\Theta} \Bigg[ \bbE \Bigg[ e^{-\img t \frac{w - Z(i,j)}{h}} \phi_K(t) 
			\frac{ \phi_N(\frac{t}{h}) -\big[ \hat{\phi}_{N,i}(\frac{t}{h}) + \rho \big]}{\phi_N(\frac{t}{h}) \big[ \hat{\phi}_{N,i}(\frac{t}{h}) + \rho \big]} ~\bigg|~ \Flatent \Bigg] \Bigg] dt~dw	\nonumber\\
		&\stackrel{(a)}{=} \frac{1}{2\pi h |\cB_i|} 	\int_{\fmin}^{z \wedge \fmax} \sum_{j \in \cB_i} \int_{-\infty}^{\infty} e^{-\img t \frac{w}{h}}\phi_K(t)  
			\bbE_{\Theta} \Bigg[ \bbE\big[ e^{\img \frac{t}{h}Z(i,j)} ~\big|~ \Flatent \big]	
			\bbE\bigg[ \frac{ \phi_N(\frac{t}{h}) -\big[ \hat{\phi}_{N,i}(\frac{t}{h}) + \rho \big]}{\phi_N(\frac{t}{h}) \big[ \hat{\phi}_{N,i}(\frac{t}{h}) + \rho \big]} ~\bigg|~ \Flatent \bigg] \Bigg]dt~dw.	\label{eqn:term.integral2}
\end{align}
Here, (a) follows from the conditional independence\footnote{Observe that $\hatNest(t)$ is a function of 
$\{ N(i',j_1) - N(i',j_2)\}_{(i',j_1, j_2) \in \cT_i}$. By the construction of the set $\cT_i$ described in \eqref{eqn:set_Ti}, $\cT_i$ is conditionally 
independent of $Z(i',j)$ with $i' = i$ when conditioned on $\Flatent$. Therefore, $\hatNest(t)$ is conditionally independent of 
$Z(i,j)$ for any $j \in [n]$, too.} between $Z(i,j)$ and $\hatNest(t)$ when conditioned on $\Theta$.

Now we observe that $Z(i,j) = A(i,j) + N(i,j) = g(\frow{i}, \fcol{j}) + N(i,j)$ when conditioned on $\Theta$. Therefore,
\begin{align}
	\Exp{ e^{\img \frac{t}{h}Z(i,j)} ~\big| ~ \Theta} &= \Exp{ e^{\img \frac{t}{h}A(i,j)}e^{\img \frac{t}{h}N(i,j)} ~\big| ~ \Theta}
		= e^{\img \frac{t}{h}g(\frow{i}, \fcol{j})}	\Exp{e^{\img \frac{t}{h}N(i,j)} ~\big|~\Theta}
		= e^{\img \frac{t}{h}g(\frow{i}, \fcol{j})}	\Exp{e^{\img \frac{t}{h}N(i,j) } }		\nonumber\\
		&= e^{\img \frac{t}{h}g(\frow{i}, \fcol{j})}\phi_N\bigg( \frac{t}{h} \bigg).			\label{eqn:mgf_condition}
\end{align}
By \eqref{eqn:term.integral2} and \eqref{eqn:mgf_condition},
\begin{align}
	\Exp{\hat{F}_i(z) - \tilde{F}_i(z) }
		&= \frac{1}{2\pi h |\cB_i|} 	\int_{\fmin}^{z \wedge \fmax} \sum_{j \in \cB_i} \int_{-\infty}^{\infty} \bbE\Big[ e^{\img t \frac{g(\frow{i}, \fcol{j}) - w}{h}} \Big] \phi_K(t)  \phi_N\Big(\frac{t}{h}\Big)
			\bbE\Bigg[ \frac{ \phi_N(\frac{t}{h}) -\big[ \hat{\phi}_{N,i}(\frac{t}{h}) + \rho \big]}{\phi_N(\frac{t}{h}) \big[ \hat{\phi}_{N,i}(\frac{t}{h}) + \rho \big]} \Bigg] dt~dw	\nonumber\\
		&= \frac{1}{2\pi h |\cB_i|} 	\int_{\fmin}^{z \wedge \fmax} \sum_{j \in \cB_i} \int_{-\infty}^{\infty} \bbE \Big[ e^{\img t \frac{g(\frow{i}, \fcol{j}) - w}{h}} \Big] \phi_K(t) 
			\bbE\Bigg[ \frac{ \phi_N(\frac{t}{h}) -\big[ \hat{\phi}_{N,i}(\frac{t}{h}) + \rho \big]}{ \hat{\phi}_{N,i}(\frac{t}{h}) + \rho } \Bigg] dt~dw.	\label{eqn:bias_hat_tilde_intermediate}
\end{align}
Next, we take the supremum of $\Exp{\hat{F}_i(z) - \tilde{F}_i(z) }$ over $z \in \Reals$ to obtain
\begin{align*}
	&\bigg| \sup_{z \in \Reals} \Exp{\hat{F}_i(z) - \tilde{F}_i(z) } \bigg|	\\
	&\leq	 \frac{1}{2\pi h}	\Bigg| \sup_{z \in \Reals}   \int_{\fmin}^{z \wedge \fmax}  \frac{1}{ |\cB_i|} \sum_{j \in \cB_i}  \int_{-\infty}^{\infty} \bbE\Big[e^{\img t \frac{g(\frow{i}, \fcol{j}) - w}{h}} \Big] \phi_K(t)
		\bbE\Bigg[ \frac{ \phi_N(\frac{t}{h}) -\big[ \hat{\phi}_{N,i}(\frac{t}{h}) + \rho \big]}{ \hat{\phi}_{N,i}(\frac{t}{h}) + \rho } \Bigg] dt~dw \Bigg|		\\
	&\leq	\frac{\fmax - \fmin}{ 2 \pi h } \max_{j \in \cB_i} \int_{-\infty}^{\infty} \Bigg| \bbE \Big[ e^{\img t \frac{g(\frow{i}, \fcol{j}) - w}{h}} \Big] \phi_K(t) 
		\bbE \Bigg[ \frac{ \phi_N(\frac{t}{h}) -\big[ \hat{\phi}_{N,i}(\frac{t}{h}) + \rho \big]}{ \hat{\phi}_{N,i}(\frac{t}{h}) + \rho } \Bigg] \Bigg| dt		\\
	&\leq	\frac{\fmax - \fmin}{ 2 \pi h } \int_{-\infty}^{\infty} \max_{j \in \cB_i}  \bigg| \bbE\Big[e^{\img t \frac{g(\frow{i}, \fcol{j}) - w}{h}}\Big] \bigg| \big| \phi_K(t) \big| 
		\Bigg| \bbE\Bigg[ \frac{ \phi_N(\frac{t}{h}) -\big[ \hat{\phi}_{N,i}(\frac{t}{h}) + \rho \big]}{ \hat{\phi}_{N,i}(\frac{t}{h}) + \rho } \Bigg] \Bigg| dt		\\
	&\leq	\frac{\fmax - \fmin}{ 2 \pi h } \int_{-\infty}^{\infty} \max_{j \in \cB_i}  \bbE\bigg[  \Big|e^{\img t \frac{g(\frow{i}, \fcol{j}) - w}{h}}\Big| \bigg] \big| \phi_K(t) \big| 
		\Bigg| \bbE\Bigg[ \frac{ \phi_N(\frac{t}{h}) -\big[ \hat{\phi}_{N,i}(\frac{t}{h}) + \rho \big]}{ \hat{\phi}_{N,i}(\frac{t}{h}) + \rho } \Bigg] \Bigg| dt		\\
	&\stackrel{(a)}{\leq}	\frac{\fmax - \fmin}{ 2 \pi h } \int_{-\infty}^{\infty} \big| \phi_K(t) \big| 
		\Bigg| \bbE\Bigg[ \frac{ \phi_N(\frac{t}{h}) -\big[ \hat{\phi}_{N,i}(\frac{t}{h}) + \rho \big]}{ \hat{\phi}_{N,i}(\frac{t}{h}) + \rho } \Bigg] \Bigg| dt		\\
	&\stackrel{(b)}{\leq}	\frac{\fmax - \fmin}{ 2 \pi h } \int_{-1}^{1} \kmax \Bigg| \bbE\Bigg[ \frac{ \phi_N(\frac{t}{h}) -\big[ \hat{\phi}_{N,i}(\frac{t}{h}) + \rho \big]}{ \hat{\phi}_{N,i}(\frac{t}{h}) + \rho } \Bigg] \Bigg| dt			\\
	&\leq	\frac{\kmax(\fmax - \fmin)}{ \pi h } \max_{t \in [-1, 1]} \Bigg| \bbE\Bigg[ \frac{ \phi_N(\frac{t}{h}) -\big[ \hat{\phi}_{N,i}(\frac{t}{h}) + \rho \big]}{ \hat{\phi}_{N,i}(\frac{t}{h}) + \rho } \Bigg] \Bigg|.	
\end{align*}
Here, (a) follows from that $ \Big| e^{\img t \frac{g(\frow{i}, \fcol{j}) - w}{h}} \Big| \leq 1$; and (b) follows from the assumption that 
$\phi_K(t) = 0$ for $t \not\in [-1, 1]$.
\end{proof}

\begin{lemma}\label{lem:bias_hat_conditioned}
	Given $i \in [m]$, let $\tilde{F}_i$ be defined as in \eqref{eqn:ECDF_known_noise} and $\hat{F}_i$ be defined as in 
	\eqref{eqn:ECDF_unknown_noise} with the kernel bandwidth $h = (4\gamma)^{\frac{1}{\beta}}(\log |\cB_i|)^{-\frac{1}{\beta}}$ 
	and the ridge parameter $\rho = \frac{1}{B} |\cB_i|^{-\frac{9}{20}}$. If $|\cB_i| \geq 1024$ and $mp$ and $n$ are 
	sufficiently large so that $\valphiAA + \valphiBB \leq \frac{1}{B} |\cB_i|^{-\frac{9}{20}}$, then
	\begin{align*}
		\bbP\Bigg( \bigg| \sup_{z \in \Reals} \Exp{\hat{F}_i(z) - \tilde{F}_i(z) }\bigg|
			> 4\constcc \frac{(\log |\cB_i|)^{\frac{1}{\beta}}}{|\cB_i|^{\frac{1}{5}}} 
		~\bigg|~ \Ephi\Bigg)	 = 0.
	\end{align*}
\end{lemma}

\begin{proof}
In this proof, we establish an upper bound on the term on the right-hand side of \eqref{eqn:term.upper} in Lemma \ref{lem:hat_tilde}, 
conditioned on the event $\Ephi$. As the first step, we note that
\[	
	\frac{ \phi_N(\frac{t}{h}) -\big[ \hat{\phi}_{N,i}(\frac{t}{h}) + \rho \big]}{ \hat{\phi}_{N,i}(\frac{t}{h}) + \rho } 
	= \frac{ \phi_N(\frac{t}{h}) - \hat{\phi}_{N,i}(\frac{t}{h}) -  \rho }{ \phi_N(\frac{t}{h}) - \big[ \phi_N(\frac{t}{h}) - \hat{\phi}_{N,i}(\frac{t}{h}) - \rho \big] }.
\]
Then we observe from the supersmoothness assumption on the noise (cf. \eqref{eqn:model_supersmooth}) that
\begin{equation}\label{eqn:lower_phiN}	
	\phi_N \left(\frac{t}{h} \right)  \geq \frac{1}{B}\exp\bigg(- \gamma \Big|\frac{t}{h}\Big|^{\beta} \bigg) 
		= \frac{1}{B}\exp \bigg(- \frac{1}{4} t^{\beta} \log |\cB_i| \bigg)
	 	= \frac{1}{B} |\cB_i|^{-\frac{1}{4}t^{\beta}} \geq \frac{1}{B} |\cB_i|^{-\frac{1}{4}},		\qquad\text{for all }t \in [-1, 1].
\end{equation}

Recall from the definition of $\Ephi$ in \eqref{eqn:Ef} that 
\begin{equation}\label{eqn:upper_phi_hphi}
	\max_{t \in [-1, 1]}\bigg| \phi_N \Big(\frac{t}{h} \Big) - \hat{\phi}_{N,i} \Big(\frac{t}{h} \Big) \bigg| \leq \valphi,
\end{equation} 
when conditioned on $\Ephi$. Recall that we have chosen the ridge parameter $\rho = \frac{1}{B} |\cB_i|^{-\frac{9}{20}}$ and 
we assumed that $\valphiAA + \valphiBB \leq  \frac{1}{B} |\cB_i|^{-\frac{9}{20}}$. It follows that when $|\cB_i| \geq 2^{10}$,
\begin{align*}
	\bigg| \phi_N \Big(\frac{t}{h} \Big) \bigg| 
		&\geq \frac{1}{B} |\cB_i|^{-\frac{1}{4}}		\geq \frac{4}{B} | \cB_i|^{-\frac{9}{20}}	\geq 2 \big( \valphiAA + \valphiBB + \rho \big)\\
		&\geq 2 \bigg( \bigg| \phi_N \Big(\frac{t}{h}\Big) - \hat{\phi}_{N,i}\Big(\frac{t}{h}\Big)\bigg| + \rho \bigg)
		\geq 2 \bigg|~ \phi_N \Big(\frac{t}{h}\Big) - \hat{\phi}_{N,i}\Big(\frac{t}{h}\Big) - \rho~ \bigg|,	\qquad\text{ for all } t \in [-1, 1].
\end{align*}
Therefore,
\begin{align}
	\max_{t \in [-1, 1]} \left| \frac{ \phi_N(\frac{t}{h}) - \hat{\phi}_{N,i}(\frac{t}{h}) - \rho}{ \hat{\phi}_{N,i}(\frac{t}{h}) + \rho } \right|
		&\leq \max_{t \in [-1, 1]} \left| \frac{ \phi_N(\frac{t}{h}) - \hat{\phi}_{N,i}(\frac{t}{h}) - \rho}{ \frac{1}{2}\phi_{N}(\frac{t}{h}) } \right|
				\nonumber\\
		&\leq 2 \max_{t \in [-1, 1]} \Bigg( \bigg|\phi_N \Big(\frac{t}{h}\Big) \bigg|^{-1} 
			\bigg|\phi_N \Big(\frac{t}{h}\Big) - \hat{\phi}_{N,i}\Big(\frac{t}{h}\Big) - \rho \bigg| \Bigg)	
				\nonumber\\
		&\leq  2  \max_{t \in [-1, 1]} \Bigg( \bigg|\phi_N \Big(\frac{t}{h}\Big) \bigg|^{-1} \Bigg)
			\Bigg( \max_{t \in [-1, 1]}  \bigg|\phi_N \Big(\frac{t}{h}\Big) - \hat{\phi}_{N,i}\Big(\frac{t}{h}\Big) \bigg| + \rho \Bigg) 
				\nonumber\\
		&\stackrel{(a)}{\leq} 2B |\cB_i|^{\frac{1}{4}} \big( \valphi + \rho \big)		\nonumber\\
		&\stackrel{(b)}{\leq} 4B |\cB_i|^{-\frac{1}{5}}	\label{eqn:term_upper}
\end{align}
when conditioned on $\Ephi$; (a) follows from \eqref{eqn:lower_phiN}, \eqref{eqn:upper_phi_hphi}, and (b) follows from 
the assumption that $\valphi \leq \frac{1}{B}|\cB_i|^{-\frac{9}{20}} = \rho$. We complete the proof by inserting 
\eqref{eqn:term_upper} to \eqref{eqn:term.upper} in Lemma \ref{lem:hat_tilde}.
\end{proof}

\subsection{Support Lemmas to Control the Variance of $\hat{F}_i$}\label{sec:variance_unknown}
	
\begin{lemma}\label{lem:concentration_hat}
	For $i \in [m]$, let $\hat{F}_i$ be defined as in \eqref{eqn:ECDF_unknown_noise} with the kernel bandwidth $h = (4\gamma)^{\frac{1}{\beta}}(\log |\cB_i|)^{-\frac{1}{\beta}}$ 
	and the ridge parameter $\rho = \frac{1}{B} |\cB_i|^{-\frac{9}{20}}$. Then for any $t > 0$, 
	\begin{align*}
		\mathbb{P} \bigg( \left| \hat{F}_i(z) - \Exp{\hat{F}_i(z)} \right| \geq t ~ \bigg| ~ | \cB_i | = \numi \bigg)	
			&\leq 2\exp\left( - \frac{ n_i^{\frac{1}{10}} }{2\constcc^2 \left( \log n_i \right)^{\frac{2}{\beta}}} t^2	 \right).
	\end{align*}
\end{lemma}
	
\begin{proof} [Proof of Lemma \ref{lem:concentration_hat}]
We follow the same logic as in the proof of Lemma \ref{lem:concentration_tilde}. Recall that when conditioned on $\frow{i}$, 
the kernel smoothed ECDF $\hat{F}_i$ evaluated at $z$ is a function of $\left| \cB_i \right|$ independent random variables 
$\{Z(i, j)\}_{j \in \cB_i}$, i.e., when $z$ is fixed, $\hat{F}_i(z): \Reals^{\left| \cB_i \right|} \to \Reals$ such that
\begin{align*}
	\hat{F}_i(z)\left[ Z(i,j_1), \ldots, Z(i,j_{\left| \cB_i \right|}) \right] 
		& = \int_{\fmin}^{z \wedge \fmax} \frac{1}{h \left| \cB_i \right|} \sum_{j\in \cB_i} \hat{L} \left( \frac{w- Z(i,j)}{h} \right) dw,	
\end{align*}
where $\hat{L}(z) = \frac{1}{2\pi} \int e^{-\img tz} \frac{\phi_K(t)}{\hat{\phi}_N\left(\frac{t}{h}\right)+\rho}dt$ and $h$ is the bandwidth parameter for kernel $K$. 

First, we show that $\hat{F}_i(z)$ satisfies the bounded difference condition (see Eq. \eqref{eqn:bounded_difference}). 
Let $\zeta^{n_i} = (\zeta_1, \ldots, \zeta_{n_i})$ and $\zeta^{n_i}_j = (\zeta_1, \ldots, \zeta_j', \ldots, \zeta_{n_i})$ be 
two $n_i$-tuples of real numbers, which differ only at the $j$-th position. Then
\begin{align}
	\Big| \hat{F}_i(z)[\zeta^{n_i}] - \hat{F}_i(z)[\zeta^{n_i}_j] \Big|
		&= \Bigg| \frac{1}{h n_i} \int_{\fmin}^{z \wedge \fmax}  \hat{L} \left( \frac{w- \zeta_j}{h} \right) - \hat{L} \left( \frac{w- \zeta'_j}{h} \right) dw \Bigg|		\nonumber\\
		&= \Bigg| \frac{1}{h n_i} \int_{\fmin}^{z \wedge \fmax}  \frac{1}{2\pi} \int \Big(e^{-\img t \frac{w- \zeta_j}{h} } - e^{-\img t \frac{w- \zeta'_j}{h} }\Big) 
			\frac{\phi_K(t)}{\hat{\phi}_N\left(\frac{t}{h}\right) + \rho}dt dw \Bigg|		\nonumber\\
		&\leq \frac{1}{2\pi h n_i} \int_{\fmin}^{z \wedge \fmax}  \int \Big|e^{-\img t \frac{w- \zeta_j}{h} } - e^{-\img t \frac{w- \zeta'_j}{h} }\Big| 
			\left|\frac{\phi_K(t)}{\hat{\phi}_N\left(\frac{t}{h}\right) + \rho} \right| dt dw.	\label{eqn:difference_intermediate.unknown}
\end{align}

We make three observations to further simplify \eqref{eqn:difference_intermediate.unknown}:
\begin{itemize}
\item
Since $\big| e^{-\img tz} \big| = 1$ for any real numbers $t$ and $z$, we have 
\begin{align*}
	\Big|e^{-\img t \frac{w- \zeta_j}{h} } - e^{-\img t \frac{w- \zeta'_j}{h} }\Big| 
		&\leq \Big|e^{-\img t \frac{w- \zeta_j}{h} }\Big| + \Big| e^{-\img t \frac{w- \zeta'_j}{h} }\Big|  = 2.
\end{align*}
\item
Also, we observe that $\hat{\phi}_N \left( \frac{t}{h} \right) \geq 0$ for all $t$ by definition, and hence, 
$\hat{\phi}_N \left( \frac{t}{h} \right) + \rho \geq \rho = \frac{1}{B} n_i^{-\frac{9}{20}}$.
\item	
Recall that we choose $h = \left(4\gamma \right)^{\frac{1}{\beta}}\left( \log n_i \right)^{-\frac{1}{\beta}}$ in the algorithm description 
in Section \ref{sec:alg_noisy_unknown}
\end{itemize}

Plugging these expresions into \eqref{eqn:difference_intermediate.unknown} leads to
\begin{align*}
	\Big| \hat{F}_i(z)[\zeta^{n_i}] - \hat{F}_i(z)[\zeta^{n_i}_j] \Big|
		&\leq \frac{\left( \log n_i \right)^{\frac{1}{\beta}}}{2\pi \left( 4\gamma \right)^{\frac{1}{\beta}} n_i} \int_{\fmin}^{z \wedge \fmax}  
			\int_{-1}^{1} 2 B \kmax n_i^{\frac{9}{20}} dt dw\\
		&\leq \frac{ B \kmax \left( \log n_i \right)^{\frac{1}{\beta}}}{\pi \left( 4\gamma \right)^{\frac{1}{\beta}} n_i^{\frac{11}{20}}} \int_{\fmin}^{z \wedge \fmax} 
			\left( 1 - (-1) \right)  dw\\
		&\leq \frac{ 2 B \kmax (\fmax - \fmin ) \left( \log n_i \right)^{\frac{1}{\beta}}}{\pi \left( 4\gamma \right)^{\frac{1}{\beta}} n_i^{\frac{11}{20}}}  \\
		&=   \frac{ 2 \constcc \left( \log n_i \right)^{\frac{1}{\beta}}}{ n_i^{\frac{11}{20}}} ,	
			\qquad \text{for any }z \in [\fmin, \fmax].
\end{align*}

Applying McDiarmid's inequality (Lemma \ref{lem:McDiarmid}), we can conclude that,
\begin{align*}
	\mathbb{P} \bigg( \left| \hat{F}_i(z)[\zeta^{n_i}] - \bbE_{\zeta^{n_i}}{\hat{F}_i(z)[\zeta^{n_i}]} \right| \geq t \bigg)
		&\leq 2\exp\left( - \frac{ n_i^{\frac{1}{10}} }{2\constcc^2 \left( \log n_i \right)^{\frac{2}{\beta}}} t^2	 \right).		
\end{align*}
\end{proof}

We want to uniformly control the variance over all $z \in [\fmin, \fmax]$. Applying the $\varepsilon$-net argument, we obtain the following lemma 
as a corollary of Lemma \ref{lem:concentration_hat}. We define $\widehat{\textrm{Res}}: [n] \to \Reals$ in a similar manner as we define $\textrm{Res}$ in \eqref{eqn:resolution} (note that the only difference is in the power of $k$; $\frac{1}{4}$ vs $\frac{9}{20}$):
\begin{equation}\label{eqn:resolution_hat}
	\hres{k} =  \constcc k^{\frac{9}{20}} \left( \log k \right)^{\frac{1}{\beta}}.
\end{equation}

\begin{lemma}\label{lem:sup_hat}
	For $i \in [m]$, let $\hat{F}_i$ be defined as in \eqref{eqn:ECDF_unknown_noise} with the kernel bandwidth 
	$h = (4\gamma)^{\frac{1}{\beta}}(\log |\cB_i|)^{-\frac{1}{\beta}}$ and the ridge parameter 
	$\rho = \frac{1}{B} |\cB_i|^{-\frac{9}{20}}$. Then for any positive integer $\Nnet$ and for any $t > 0$,
	\begin{align*}
		\bbP\bigg( \sup_{z \in [\fmin, \fmax]} \left| \hat{F}_i(z) - \Exp{\hat{F}_i(z)} \right| \geq t 
			+  \frac{\hres{\numi}}{\Nnet} ~ \bigg| ~ | \cB_i | = \numi \bigg)
			&\leq 	2 \Nnet \exp\left( - \frac{ \numi^{\frac{1}{10}} }{2 \constcc^2 \left( \log \numi \right)^{\frac{2}{\beta}} }  t^2 \right).
	\end{align*}
\end{lemma}

\begin{proof} [Proof of Lemma \ref{lem:sup_hat}]
The following proof has the same structure as in the proof of Lemma \ref{lem:sup_hat}]. For any given positive integer 
$\Nnet \geq 1$, define the set
\[	\cT_{\Nnet} := \left\{ \fmin + \frac{2k - 1}{2 \Nnet}\left( \fmax - \fmin \right),~\forall k \in [\Nnet] \right\}.	\]
Then for any $\Nnet > 0$, $\cT_{\Nnet} \subset [\fmin, \fmax]$ and it forms a $\frac{\big( \fmax - \fmin \big)}{2\Nnet}$-net with $\left| \cT_{\Nnet} \right| = \Nnet$, 
i.e., for any $z \in \left[\fmin, \fmax\right]$, there exists $k \in [\Nnet]$ such that $\left| z - \frac{2k - 1}{2\Nnet}\left( \fmax - \fmin \right) \right| 
\leq \frac{\left( \fmax - \fmin \right)}{2\Nnet}$.

Next, we observe that
\begin{align*}
	\left\| \hat{f}_i \right\|_{\infty} 	
		&= \bigg\|\frac{1}{h \left| \cB_i \right|} \sum_{j \in \cB_i} \hat{L} \left( \frac{z - Z(i,j)}{h} \right) \bigg\|_{\infty}\\
		&\leq \frac{1}{h} \big\| \hat{L} \big\|_{\infty}
		= \frac{1}{2\pi h} \bigg\| \int_{-\infty}^{\infty} e^{-\img tz} \frac{\phi_K(t)}{\hat{\phi}_{N,i}\left( \frac{t}{h} \right) + \rho } dt  \bigg\|_{\infty}\\
		&\leq \frac{1}{2\pi h} \int_{-\infty}^{\infty} \big| e^{-\img tz}\big| \bigg| \frac{\phi_K(t)}{\hat{\phi}_{N,i}\left( \frac{t}{h} \right) + \rho } \bigg| dt\\
		&\stackrel{(a)}{\leq} \frac{1}{2\pi h} \int_{-\infty}^{\infty} \bigg| \frac{\phi_K(t)}{\rho } \bigg| dt		\\
		&\stackrel{(b)}{\leq} \frac{1}{2\pi h} \int_{-1}^1 B \kmax \left| \cB_i \right|^{\frac{9}{20}} dt\\%
		&\stackrel{(c)}{\leq} \frac{ \left( \log \left| \cB_i \right| \right)^{\frac{1}{\beta}}}{2\pi \left( 4\gamma \right)^{\frac{1}{\beta}} } \int_{-1}^1 B \kmax \left| \cB_i \right|^{\frac{9}{20}}  dt	\\
		&\leq \frac{B \kmax }{\pi \left( 4\gamma \right)^{\frac{1}{\beta}} }  \left| \cB_i \right|^{\frac{9}{20}} \left( \log \left| \cB_i \right| \right)^{\frac{1}{\beta}} \\
		&= \frac{ \hres{ |\cB_i| } }{ \fmax - \fmin }.
\end{align*}
Here, (a) follows from $\hat{\phi}_{N,i}\left( \frac{t}{h} \right) \geq 0$; (b) is the result of $\rho = \frac{1}{B} |\cB_i|^{-\frac{9}{20}}$; and 
(c) follows from the choice $h = \left( 4\gamma\right)^{\frac{1}{\beta}} \left( \log \left| \cB_i \right| \right)^{-\frac{1}{\beta}}$.

When conditioned on $|\cB_i| = \numi$, this upper bound is universal for all realization of $N, M$. Therefore, when conditioned 
on $|\cB_i| = \numi$, $\left\| \bbE\big[ \hat{f}_i \big] \right\|_{\infty} \leq \frac{ \hres{\numi} }{\fmax - \fmin}$, too. By triangle inequality, 
$\left\| \hat{f}_i - \Exp{ \hat{f}_i } \right\|_{\infty} \leq \frac{ 2 \hres{\numi} }{\fmax - \fmin}$ and it follows from the definition of 
$\hat{F}_i$ (see \eqref{eqn:ECDF_unknown_noise}) that 
\[	\sup_{z \in [\fmin, \fmax]} \bigg| \hat{F}_i(z) - \Exp{\hat{F}_i(z)} \bigg| 
	\leq \sup_{z \in \cT_{\Nnet}} \bigg| \hat{F}_i(z) - \Exp{\hat{F}_i(z)} \bigg| +  \frac{ 2 \hres{\numi} }{\fmax - \fmin }\frac{ \fmax - \fmin }{2 \Nnet}.	\]

Therefore, if $\left| \tilde{F}_i(z) - \Exp{\tilde{F}_i(z)} \right| \leq \varepsilon$ for all $z \in \cT_n$, the supremum over the whole domain 
is also bounded above by $\eps$, up to an additional discretization error term, $ \frac{ \hres{\numi} }{ \Nnet}$. That is to say,  
\[	
	\sup_{z \in \cT_n } \left|~ \hat{F}_i(z) - \Exp{\hat{F}_i(z)} ~\right| \leq \varepsilon
	\qquad\implies\qquad
	\sup_{z \in [\fmin, \fmax]} \left|~ \hat{F}_i(z) - \Exp{\hat{F}_i(z)} ~\right| \leq \varepsilon + \frac{\hres{\numi} }{\Nnet}.	
\]
Applying the union bound on the contraposition of the previous statement yields the conclusion: for any $t > 0$,
\begin{align*}
	\Prob{ \sup_{z \in [\fmin, \fmax]} \left|~ \hat{F}_i(z) - \Exp{\hat{F}_i(z)} ~\right| \geq t  + \frac{\hres{\numi}}{\Nnet} ~ \bigg| ~ |\cB_i | = \numi }
		&\leq \Prob{ \sup_{z \in \cT_{\Nnet}} \left|~ \hat{F}_i(z) - \Exp{\hat{F}_i(z)} ~\right| \geq t }\\
		&\leq \sum_{z \in \cT_{\Nnet}} \Prob{ \left|~ \hat{F}_i(z) - \Exp{\hat{F}_i(z)} ~\right| \geq t }\\
		&\leq   2 \Nnet \exp\left( - \frac{ \numi^{\frac{1}{10}} }{2 \constcc^2 \left( \log \numi \right)^{\frac{2}{\beta}} }  t^2 \right).
\end{align*}		
	
\end{proof}

\subsection{Completing the Proof of Theorem \ref{thm:cdf_noisy_unknown}}\label{sec:proof_unknown}

\begin{proof}[Proof of Theorem \ref{thm:cdf_noisy_unknown}]
First of all, we observe that $\hat{F}_i(z) = F_i(z) = 0$ for all $z \leq \fmin$ and $\hat{F}_i(z) = F_i(z) = 1$ for all $z \geq \fmax$. 
Therefore,
\[	
	\sup_{z \in \Reals} \big| \hat{F}_i (z) - F_i(z) \big| = \sup_{z \in [\fmin, \fmax] } \big| \hat{F}_i (z) - F_i(z) \big|.	
\]
By the usual trick of subtracting and adding the same term (and then applying triangle inequality), we have
\begin{align}
	&\sup_{z \in [\fmin, \fmax] } \big| \hat{F}_i (z) - F_i(z) \big|	\label{eqn:triangle}\\
		&\qquad\leq \sup_{z \in [\fmin, \fmax] } \big| \hat{F}_i (z) - \bbE[ \hat{F}_i (z) ] \big|
			+ \sup_{z \in [\fmin, \fmax] } \big| \bbE\big[ \hat{F}_i (z) - \tilde{F}_i(z) \big] \big|
			+ \sup_{z \in [\fmin, \fmax] } \big| \bbE\big[ \tilde{F}_i(z) \big] - F_i(z) \big|.	\nonumber
\end{align}
Applying the union bound, the following inequality follows from \eqref{eqn:triangle}. For any $t \geq 0$ and any $s_1, s_2, s_3 \geq 0$,
\begin{align}
	\bbP\bigg( \sup_{z \in [\fmin, \fmax] } \big| \hat{F}_i (z) - F_i(z) \big| > t + s_1 + s_2 + s_3 	~\Big|~ \Ephi	\bigg)
		&\leq 	\bbP\bigg( \sup_{z \in [\fmin, \fmax] } \big| \hat{F}_i (z) - \bbE\big[ \hat{F}_i (z) \big] \big| > t + s_1 	~\Big|~ \Ephi	 \bigg)		\label{eqn:hat_term.1}\\
		&\quad + 	\bbP\bigg( \sup_{z \in [\fmin, \fmax] } \big| \bbE\big[ \hat{F}_i (z) - \tilde{F}_i(z) \big] \big| > s_2 	~\Big|~ \Ephi	 \bigg)		\label{eqn:hat_term.2}\\
		&\quad + 	\bbP\bigg( \sup_{z \in [\fmin, \fmax] } \big| \bbE\big[ \tilde{F}_i(z) \big] - F_i(z) \big| > s_3 	~\Big|~ \Ephi	 \bigg).			\label{eqn:hat_term.3}
\end{align}
In the rest of the proof, we establish upper bounds on the three terms in \eqref{eqn:hat_term.1}, \eqref{eqn:hat_term.2}, and \eqref{eqn:hat_term.3} separately.

Specifically, given $i \in [m]$, we let 
\begin{align*}
	s_1 &= \constcc  \big( \log |\cB_i| \big)^{-\frac{1}{\beta}},\\
	s_2 &= 4\constcc  \frac{\big( \log |\cB_i| \big)^{\frac{1}{\beta}}}{ |\cB_i|^{\frac{1}{5}} },\\
	s_3 &= \constbb  \big( \log |\cB_i| \big)^{-\frac{1}{\beta}}.
\end{align*}

\begin{itemize}
\item 
	To find an upper bound on \eqref{eqn:hat_term.1}, we let $\Nnet = |\cB_i|^{\frac{9}{20}} \big( \log |\cB_i| \big)^{\frac{2}{\beta}}$ 
	and observe that $s_1 =  \frac{ \hres{|\cB_i|}}{\Nnet}$; see \eqref{eqn:resolution_hat} for the definition of $\hres{k}$. 
	Then it follows from Lemma \ref{lem:sup_hat} that for any $t > 0$,
	\begin{equation}\label{eqn:hat_term.1a}
		\bbP\bigg( \sup_{z \in [\fmin, \fmax]} \left| \hat{F}_i(z) - \Exp{\hat{F}_i(z)} \right| \geq t + s_1 	~\Big|~ \Ephi	 \bigg)
			\leq 	2 |\cB_i|^{\frac{9}{20}} \big( \log |\cB_i| \big)^{\frac{2}{\beta}}
				\exp\left( - \frac{ |\cB_i|^{\frac{1}{10}} }{2 \constcc^2 \left( \log |\cB_i| \right)^{\frac{2}{\beta}} } t^2  \right).
	\end{equation}
	Note that the concentration argument in the proof of Lemma \ref{lem:sup_hat} is valid regardless of conditioning on $\Ephi$ 
	and therefore, we obtain the same probabilistic tail bound whether we condition on $\Ephi$ or not.

\item 
	Next, it follows from Lemma \ref{lem:bias_hat_conditioned} that
	\begin{equation}\label{eqn:hat_term.2a}
		\bbP\bigg( \sup_{z \in [\fmin, \fmax] } \Big|  \bbE\big[ \hat{F}_i(z) - \tilde{F}_i(z) \big] \Big| > s_2 ~\Big|~ \Ephi \bigg) = 0,
	\end{equation}
	which establishes an upper bound on \eqref{eqn:hat_term.2}.

\item 
	Lastly, it follows from Lemma \ref{lem:mean_difference_tilde} that
	\begin{align}
		\bbP\bigg( \sup_{z \in [\fmin, \fmax] }  \big| \bbE\big[ \tilde{F}_i(z) \big] - F_i(z) \big| > s_3  ~\Big|~ \Ephi \bigg) 
			&= \bbP\bigg( \sup_{z \in [\fmin, \fmax] }  \big| \bbE\big[ \tilde{F}_i(z) \big] - F_i(z) \big| > s_3  \bigg)	\nonumber\\
			&= 0	\label{eqn:hat_term.3a}
	\end{align}
	where the first equality is the result of the independence between $\sup_{z \in [\fmin, \fmax] }  \big| \bbE\big[ \tilde{F}_i(z) \big] - F_i(z) \big|$ and $\Ephi$.
\end{itemize}
We conclude the proof by inserting \eqref{eqn:hat_term.1a}, \eqref{eqn:hat_term.2a}, \eqref{eqn:hat_term.3a} to 
\eqref{eqn:hat_term.1}, \eqref{eqn:hat_term.2}, \eqref{eqn:hat_term.3}.

\end{proof}

\section{Proof of Theorem \ref{thm:ensure_condition} }\label{sec:proof_ensure}
In this section, we prove Theorem \ref{thm:ensure_condition} to ensure that $\hatNest (t)$ is a good estimator of $\phi_N(t)$ 
for all $t \in [-\frac{1}{h}, \frac{1}{h}]$, i.e., $\big| \hatNest (t) - \phi_N(t) \big|$ is uniformly small over the interval $[-\frac{1}{h}, \frac{1}{h}]$, 
with high probability. Our goal is in establishing an upper bound on the `failure' probability, $\Prob{\Ephi}$.

As the first step to the proof of Theorem \ref{thm:ensure_condition}, we define some ancillary events for conditioning in Section \ref{sec:definitions}. 
Then we present support lemmas (Lemmas \ref{lem:event_cols} - \ref{lem:event_Ephi}) to ensure those events are (conditionally) high-probability events 
in Section \ref{sec:support_lemmas}, with their proofs being postponed to Section \ref{sec:deferred_proof_support_lemmas}. 
Combining the support lemmas, we complete our proof of Theorem \ref{thm:ensure_condition} in Section \ref{sec:complete_proof_ensure}. 
The proof is based on the law of total probability.

\subsection{Definition of Ancillary Events}\label{sec:definitions}
We define some events to be used in our analysis. Recall that $(m,n)$ is the problem size, $p, \sigma, \lipmin, \lipmax$ 
are model parameters, and $J, \cT, \cT_i$ are the sets defined in Section \ref{sec:alg_noisy_unknown} to estimate $\phi_N$, cf. 
Algorithm \ref{alg:setT_main} and \eqref{eqn:set_Ti}.
we define the following six events\footnote{Note that we define $\EcTi$ and $\Ephi$ for each $i \in [m]$, while all the other events are defined 
without dependence on $i \in [m]$.}:
\begin{align}
	&\Ecols := \bigg\{  \min_{j' \in [n]} | \cB^{j'} |  \geq \frac{mp}{2} \bigg\},		\label{eqn:Ecols}\\
	&\EJ  := \bigg\{ |J| \geq \frac{1}{4} n \bigg\},			\label{eqn:Ea}\\
	&\EcTi := \bigg\{ |\cT_i| \geq \frac{1}{256} mnp \bigg\},	\label{eqn:Eb}\\
	&\EdA := \bigg\{ \max_{ (i',j_1, j_2) \in \cT} \big| A(i',j_1) - A(i',j_2) \big| 
		\leq 	  \lipmax \bigg( \frac{32 \sqrt{ \pi} \constaa}{\sqrt{ mp } } + \frac{2\sqrt{2}}{\sqrt{ n p}} + 8\sqrt{\frac{\log n}{n}} \bigg) \bigg\},	\label{eqn:Ed}\\
	&\EdN := \bigg\{ \max_{ (i',j_1, j_2) \in \cT} \big| N(i',j_1) - N(i',j_2) \big| \leq 	 8\sigma \sqrt{\log ( mn ) }  \bigg\}.			\label{eqn:Ee}
\end{align}

\subsection{Technical Lemmas to Support the Proof of Theorem \ref{thm:ensure_condition}}\label{sec:support_lemmas}

\begin{lemma}\label{lem:event_cols}
Let $\Ecols$ denote the event as defined in \eqref{eqn:Ecols}. Then
\begin{equation*}
	\Prob{\Ecols^c} 	\leq	n \exp \left( - \frac{mp}{8} \right).	
\end{equation*}
\end{lemma}
The proof of Lemma \ref{lem:event_cols} is deferred to Section \ref{sec:proof_E1}.

\begin{lemma}\label{lem:event_J}
Let $\EJ$ denote the event as defined in \eqref{eqn:Ea}. If $p \geq \frac{8 \log 2}{m}$, then
\begin{equation*}
	\Prob{\EJ^c}  \leq \exp\left( - \frac{n}{16} \right).
\end{equation*}
\end{lemma}
The proof of Lemma \ref{lem:event_J} is deferred to Section \ref{sec:proof_E2}.

\begin{lemma}\label{lem:event_Ti}
Let $\EJ, \EcTi$ denote the events as defined in \eqref{eqn:Ea}, \eqref{eqn:Eb}. If $m \geq 8$ and $np \geq 8(1+ \sqrt{3})^2$, then for any $i \in [m]$,
\begin{equation*}
	\Prob{\EcTi^c ~\big|~ \EJ}
		\leq \exp\left( - \frac{m}{16} \right).	
\end{equation*}
\end{lemma}
The proof of Lemma \ref{lem:event_Ti} is deferred to Section \ref{sec:proof_E3}.

\begin{lemma}\label{lem:event_dA}
Let $\Ecols, \EJ, \EdA$ denote the events as defined in \eqref{eqn:Ecols}, \eqref{eqn:Ea}, \eqref{eqn:Ed}. Then
\begin{equation*}
	\Prob{ \EdA^c ~\big|~ \Ecols \cap \EJ  }	\leq \frac{3}{ n^{7}}.
\end{equation*}
\end{lemma}
The proof of Lemma \ref{lem:event_dA} is deferred to Section \ref{sec:proof_E4}.

\begin{lemma}\label{lem:event_dN}
Let $\EdN$ denote the events as defined in \eqref{eqn:Ee}. Then 
\begin{equation*}
	\Prob{\EdN^c  }	 \leq   \frac{2}{m^7n^7}.
\end{equation*}
\end{lemma}
The proof of Lemma \ref{lem:event_dN} is deferred to Section \ref{sec:proof_E5}.


\begin{lemma}\label{lem:event_Ephi}
Let $\EcTi, \EdA, \EdN, \Ephi$ denote the events as defined in \eqref{eqn:Eb}, \eqref{eqn:Ed}, \eqref{eqn:Ee}, \eqref{eqn:Ef}. Then for all $i \in [m]$,
\begin{equation*}
	\Prob{ \Ephi^c ~\big|~ \EcTi \cap \EdA \cap \EdN}	\leq \frac{4}{m^7n^7}.
\end{equation*}
\end{lemma}
The proof of Lemma \ref{lem:event_Ephi} is deferred to Section \ref{sec:proof_E6}.

\subsection{Completing the Proof of Theorem \ref{thm:ensure_condition}}\label{sec:complete_proof_ensure}

\begin{proof}[Proof of Theorem \ref{thm:ensure_condition}]

Now it remains to find an upper bound on $ \bbP \big( \Ephi^c \big)$. We observe that
\begin{align}
	\Prob{\Ephi^c} 	&\leq	\Prob{\Ephi^c ~\big|~ \EcTi \cap \EdA \cap \EdN} + \Prob{\EcTi^c \cup \EdA^c \cup \EdN^c}		\nonumber\\
		&= \Prob{\Ephi^c ~\big|~ \EcTi \cap \EdA \cap \EdN} + \Prob{\EcTi^c} + \Prob{ \EdA^c} + \Prob{\EdN^c}.	\label{eqn:prob_E6}
\end{align}
First of all, by Lemma \ref{lem:event_Ephi},
\begin{equation}\label{eqn:prob_E6.1}
	\Prob{\Ephi^c ~\big|~ \EcTi \cap \EdA \cap \EdN} \leq \frac{4}{m^7n^7}.
\end{equation}
Second, we note the following inequality holds by Lemma \ref{lem:event_J} and Lemma \ref{lem:event_Ti}:
\begin{align}
	\Prob{\EcTi^c}	&\leq \Prob{\EcTi^c ~\big|~  \EJ} + \Prob{ \EJ^c}	\nonumber\\
		&\leq \exp\left( - \frac{m}{16} \right) + \exp\left( - \frac{n}{16} \right).	\label{eqn:prob_E6.2}
\end{align}
Third,
\begin{align}
	\Prob{\EdA^c}	&\leq \Prob{\EdA^c ~\big|~  \Ecols \cap \EJ} + \Prob{ \Ecols^c \cup \EJ^c}	\nonumber\\
		&\leq \Prob{\EdA^c ~\big|~  \Ecols \cap \EJ} + \Prob{ \Ecols^c } + \Prob{ \EJ^c }	\nonumber\\
		&\leq n \exp \left( - \frac{mp}{8} \right) + \exp\left( - \frac{n}{16} \right) + \frac{3}{n^7}	\label{eqn:prob_E6.3}
\end{align}
by Lemma \ref{lem:event_cols}, Lemma \ref{lem:event_J} and Lemma \ref{lem:event_dA}.
Lastly, we know from Lemma \ref{lem:event_dN} that
\begin{equation}\label{eqn:prob_E6.4}
	\Prob{\EdN^c  }	 \leq   \frac{2}{m^7n^7}
\end{equation}

Inserting \eqref{eqn:prob_E6.1}, \eqref{eqn:prob_E6.2}, \eqref{eqn:prob_E6.3} and \eqref{eqn:prob_E6.4} to \eqref{eqn:prob_E6}, we conclude
\[	
	\Prob{\Ephi^c} 
		\leq \frac{3}{n^7} + \frac{6}{m^7n^7} + n \exp \left( - \frac{mp}{8} \right) + \exp \left( - \frac{m}{16} \right) + 2 \exp \left( - \frac{n}{16} \right).
\]
\end{proof}

\section{Supplement 1 to the Proof of Theorem \ref{thm:ensure_condition}: \\Deferred Proof of the Support Lemmas from Section \ref{sec:proof_ensure} }\label{sec:deferred_proof_support_lemmas}

\subsection{Proof of Lemma \ref{lem:event_cols}}\label{sec:proof_E1}

\begin{proof}[Proof of Lemma \ref{lem:event_cols}]
Observe that $|\cB^j| = \sum_{i \in [m]} M(i,j)$ is the sum of $n$ independent Bernoulli random variables with parameter $p$. 
It follows from the binomial Chernoff bound that
\[	\Prob	{\big| \cB^{j} \big| < \frac{mp}{2}	} \leq \exp\left( - \frac{mp}{8} \right).	\] 
By definition of $\Ecols$, we obtain the following inequality by applying the union bound:
\begin{align*}
	\Prob{ \Ecols^c }
		&= \Prob{ \exists j. \in [n] \quad\text{such that}\quad | \cB^{j} | < \frac{mp}{2}  }\\
		&\leq \sum_{j \in [n]} \Prob{ | \cB^{j} | < \frac{mp}{2}  }\\
		&\leq n \exp\left( - \frac{mp}{8} \right).
\end{align*}
\end{proof}

\subsection{Proof of Lemma \ref{lem:event_J}}\label{sec:proof_E2}

\subsubsection{Helper Lemma for the Proof of Lemma \ref{lem:event_J}}
\begin{lemma}\label{lem:J}
Let $J = \big\{ j \in [n]: |\cB^j| \geq \frac{mp}{2}\big\}$. Then
\begin{align*}
	\mathbb{P}\bigg(|J| < \frac{n}{2} \left[ 1 - \exp\left( - \frac{mp}{8} \right) \right] \bigg) 
		&\leq \exp\bigg( - \frac{n}{8} \left[ 1 - \exp\left( - \frac{mp}{8} \right) \right] \bigg).
\end{align*}
\end{lemma}

\begin{proof}
Observe that the cardinality of the set $J$ can be written as the sum of indicator variables as
\begin{equation}\label{eqn:J}
	|J| = \sum_{j \in [n]} \Ind{\big| \cB^{j} \big| \geq \frac{mp}{2}}.	
\end{equation}
Note that $|\cB^j| = \sum_{i \in [m]} M(i,j)$ is the sum of $n$ independent Bernoulli random variables with parameter $p$. 
It follows from the binomial Chernoff bound that
\[	\Prob	{\big| \cB^{j} \big| \geq \frac{mp}{2}	} \geq 1 - \exp\left( - \frac{mp}{8} \right).	\] 
Therefore, we can view the $n$ indicator variables in \eqref{eqn:J} as independent Bernoulli random variables, each of which 
takes value $1$ with probability $p'$ such that $p' \geq 1 - \exp\left( - \frac{mp}{8} \right)$.
Therefore,
\begin{align*}
	\Prob{|J| < \frac{n}{2} \left[ 1 - \exp\left( - \frac{mp}{8} \right) \right] } 
		&\leq \Prob{ |J| < \frac{np'}{2}}
		 \stackrel{(a)}{\leq} \exp\left( - \frac{np'}{8} \right) 
		 \leq \exp\left( - \frac{n}{8} \left[ 1 - \exp\left( - \frac{mp}{8} \right) \right] \right).
\end{align*}
by applying the Binomial Chernoff bound again at (a).

\end{proof}

\subsubsection{Completing the Proof of Lemma \ref{lem:event_J}}
\begin{proof}[Proof of Lemma \ref{lem:event_J}]
When $p \geq \frac{8 \log 2}{m}$, we can observe that $mp \geq 8 \log 2$, and hence, $ \exp\left( - \frac{mp}{8} \right) \leq \frac{1}{2}$. 
Then by Lemma \ref{lem:J},
\begin{align*}
	\Prob{\EJ^c} &= \mathbb{P}\bigg(|J| < \frac{n}{4} \bigg)
		 \leq \mathbb{P}\bigg(|J| < \frac{n}{2} \left[ 1 - \exp\left( - \frac{mp}{8} \right) \right] \bigg)
		\leq \exp\bigg( - \frac{n}{8} \left[ 1 - \exp\left( - \frac{mp}{8} \right) \right] \bigg)
		\leq \exp\bigg( - \frac{n}{16} \bigg).
\end{align*}
\end{proof}

\subsection{Proof of Lemma \ref{lem:event_Ti}}\label{sec:proof_E3}

\subsubsection{Helper Lemma for the Proof of Lemma \ref{lem:event_Ti}}

\begin{lemma}\label{lem:I}
Let $J = \big\{ j \in [n]: |\cB^j| \geq \frac{mp}{2}\big\}$ and $I = \big\{ i \in [m]: |\cB_i \cap J| \geq \frac{|J|p}{2}\big\}$. Then
	\begin{align*}
		\mathbb{P}\bigg(|I| < \frac{m}{2} \left[ 1 - \exp\left( - \frac{n_J p}{8} \right) \right] ~\Big|~ |J| = n_J \bigg) 
			&\leq \exp\bigg( - \frac{m}{8} \left[ 1 - \exp\left( - \frac{n_J p}{8} \right) \right] \bigg).
	\end{align*}
\end{lemma}

\begin{proof}
In the same vein as int he proof of Lemma \ref{lem:J}, we observe that
\begin{equation}\label{eqn:I}
	|I| = \sum_{i \in [m]} \Ind{\left| \cB_{i} \cap J \right| \geq \frac{|J|p}{2}}.
\end{equation}
Now $|\cB_i \cap J| = \sum_{j \in J} M(i,j)$ is distributed as the binomial distribution with parameters $(m, p')$ with $p' \geq p$. 
We can see that $p' \geq p$ because $p' = \Prob{M(i,j) = 1 ~\big| ~ j \in J } \geq \Prob{M(i,j) = 1 ~\big| ~ j \not\in J}$ and 
$\Prob{M(i,j) = 1 } = p$. These $m$ indicator variables are independent Bernoulli variables, each of which takes value $1$ with probability greater than
\[	\Prob	{\left| \cB_{i} \cap J \right| \geq \frac{ n_J p}{2} ~\Big|~ |J| = n_J } 
		\geq \Prob	{\left| \cB_{i} \cap J \right| \geq \frac{ n_J p'}{2}	}
		\geq 1 - \exp\left( - \frac{ n_J p'}{8} \right)
		\geq 1 - \exp\left( - \frac{ n_J p}{8} \right).	\]
Therefore, when $|J| = n_J$, we can see that the $n$ indicator variables in \eqref{eqn:I} are independent Bernoulli random variables 
with parameter $p''$ such that $p'' \geq 1 - \exp\left( - \frac{ n_J p}{8} \right)$. That is to say, $|I|$ is distributed as the binomial distribution 
with parameter $(m, p'')$ when conditioned on $|J| = n_J$. Letting $W$ denote a binomial random variable with parameter $(m, p'')$, 
we observe that $\bbE W = m p'' \geq m \big( 1 - \exp\left( - \frac{ n_J p}{8} \right) \big)$ and therefore,
\begin{align*}
	\mathbb{P}\Bigg( |I| < \frac{m}{2} \left[ 1 - \exp\left( - \frac{ n_J p}{8} \right) \right] ~\Big|~ |J| = n_J \Bigg) 
		&\leq \mathbb{P}\Big( W < \frac{1}{2} \bbE W \Big)
		\stackrel{(a)}{ \leq} \exp\left( - \frac{mp''}{8} \right) 
		 \leq \exp\Bigg( - \frac{m}{8} \left[ 1 - \exp\left( - \frac{n_J p}{8} \right) \right] \Bigg).
\end{align*}
The inequality (a) follows from the Binomial Chernoff bound.
\end{proof}

\begin{lemma}\label{lem:T_large} 
Given $i \in [m]$, let $\cT_i$ denote the set as defined in \eqref{eqn:set_Ti} that is constructed by Algorithm \ref{alg:setT_main}.
Then
\[
	\Prob{ | \cT_i | <  \Bigg(\frac{m}{2} \left[ 1 - \exp\left( - \frac{n_J p}{8} \right) \right] - 1 \Bigg)	
		\Bigg\lceil \frac{ n_J p}{4} - \frac{1}{2} \bigg( \bigg\lfloor \sqrt{\frac{n_J p}{2}}\bigg\rfloor + 1 \bigg)\Bigg\rceil 	~\Bigg|~ |J| \geq n_J }
		\leq \exp\bigg( - \frac{m}{8} \left[ 1 - \exp\left( - \frac{n_J p}{8} \right) \right] \bigg).
\]
\end{lemma}
	
\begin{proof}
Recall the definitions of $J = \big\{ j \in [n]: |\cB^j| \geq \frac{mp}{2}\big\}$ and $I = \big\{ i \in [m]: |\cB_i \cap J| \geq \frac{|J|p}{2}\big\}$. 
For $i' \in I$, let $\sigma_{i'}: \cB_{i'} \cap J \to \left[ |\cB_{i'} \cap J | \right]$ denote a map that sorts the column index $j \in \cB_{i'} \cap J \subseteq [n]$ 
in the increasing order of $\hat{q}_{\marg}\left( j \right)$ such that $\hat{q}_{\marg}\left( j_1 \right) \leq \hat{q}_{\marg}\left( j_2 \right)$ if $\sigma_{i'}(j_1) 
< \sigma_{i'}(j_2)$. Note that $\sigma_{i'}$ is a bijection and is invertible; we let $\sigma_{i'}^{-1}: [|\cB_{i'} \cap J |] \to \cB_{i'} \cap J \subseteq [n]$ 
denote the inverse map of $\sigma_{i'}$.

Now, we define a set 
\begin{equation}\label{eqn:bad_condition}
	\cS_{i'} := \left\{ k \in \left[ \left| \cB_{i'} \cap J \right| -1 \right] ~\Bigg|~ 
		\Big| \hat{q}_{\marg}\left( \sigma_{i'}^{-1}(k+1) \right) - \hat{q}_{\marg}\left( \sigma_{i'}^{-1}(k) \right) \Big| 
		> \frac{1}{\sqrt{\left|\cB_{i'} \cap J \right| }} \right\}.
\end{equation}
It is easy to verify that $| \cS_{i'} | \leq \left\lfloor \sqrt{\left| \cB_{i'} \cap J \right|} \right\rfloor$ because $\hat{q}_{\marg}\left( \sigma_{i'}^{-1}(k) \right)$ is 
increasing with respect to $k$ and $\hat{q}_{\marg}\left( \sigma_{i'}^{-1}(k) \right) \in [0,1]$ for all $k \in \left[ \left| \cB_{i'} \cap J \right| \right]$.

For those $k \in \left[ \left|\cB_{i'} \cap J \right| - 1 \right] \setminus \cS_{i'}$, we have 
\[
	\hat{q}_{\marg}\left(\sigma_{i'}^{-1}(k+1)\right)- \hat{q}_{\marg}\left(\sigma_{i'}^{-1}(k)\right) \leq \frac{1}{\sqrt{|\cB_{i'} \cap J |}}.
\] 
In case both $k, k+1\in  \left[ \left|\cB_{i'} \cap J \right| - 1 \right] \setminus \cS_{i'}$, either 
$\big(i', \sigma_{i'}^{-1}(k), \sigma_{i'}^{-1}(k+1)\big) \in \cT$ or $\big(i', \sigma_{i'}^{-1}(k+1), \sigma_{i'}^{-1}(k+2)\big) \in \cT$, 
but not both; see lines 10 - 12 of Algorithm \ref{alg:setT_main}. However, $\left(i', \sigma_{i'}^{-1}(k), \sigma_{i'}^{-1}(k+1)\right) \in \cT$ 
for at least half of $k \in \big[ \left|\cB_{i'} \cap J \right| - 1 \big] \setminus \cS_{i'}$. 

From the above observations, we can see that for each $i' \in I$, there exist at least 
$ \Big\lceil \frac{1}{2} \Big( |\cB_{i'} \cap J | - 1 - \left\lfloor \sqrt{\left|\cB_{i'} \cap J \right| }\right\rfloor \Big) \Big\rceil $ 
number of $k$'s such that $\left(i', \sigma_{i'}^{-1}(k), \sigma_{i'}^{-1}(k+1)\right) \in \cT$. Moreover, for $i' \in I$, 
\begin{equation*}
	\bigg\lceil \frac{1}{2} \Big( |\cB_{i'} \cap J | - 1 - \Big\lfloor \sqrt{\left|\cB_{i'} \cap J \right| }\Big\rfloor \Big) \bigg\rceil 
		\geq \bigg\lceil \frac{1}{2} \Big( \frac{|J|p}{2} - 1 - \Big\lfloor \sqrt{\frac{|J|p}{2} }\Big\rfloor \Big) \bigg\rceil
\end{equation*}
by the definition of $I$. Therefore,
\begin{equation*}
	| \cT | \geq |I| \bigg\lceil \frac{1}{2} \Big( \frac{|J|p}{2} - 1 - \Big\lfloor \sqrt{\frac{|J|p}{2} }\Big\rfloor \Big) \bigg\rceil
\end{equation*}
and even when $i \in I$,
\begin{equation}\label{eqn:cT1.a}
	| \cT_i | \geq \big( |I| - 1 \big) \bigg\lceil \frac{1}{2} \Big( \frac{|J|p}{2} - 1 - \Big\lfloor \sqrt{\frac{|J|p}{2} }\Big\rfloor \Big) \bigg\rceil.
\end{equation}

All in all, by \eqref{eqn:cT1.a} and Lemma \ref{lem:I},
\begin{align*}
	&\Prob{ | \cT_i | <  \Bigg(\frac{m}{2} \left[ 1 - \exp\left( - \frac{n_J p}{8} \right) \right] - 1 \Bigg)	
		\Bigg\lceil \frac{ n_J p}{4} - \frac{1}{2} \bigg( \bigg\lfloor \sqrt{\frac{n_J p}{2}}\bigg\rfloor + 1 \bigg)\Bigg\rceil 	~\Bigg|~ |J| \geq n_J }\\
	&\qquad\leq	\Prob{ | \cT_i | <  \Bigg(\frac{m}{2} \left[ 1 - \exp\left( - \frac{n_J p}{8} \right) \right] - 1 \Bigg)
		\Bigg\lceil \frac{ n_J p}{4} - \frac{1}{2} \bigg( \bigg\lfloor \sqrt{\frac{n_J p}{2}}\bigg\rfloor + 1 \bigg)\Bigg\rceil 	~\Bigg|~ |J| = n_J }\\
	&\qquad\leq	\Prob{ | I | <  \frac{m}{2} \left[ 1 - \exp\left( - \frac{n_J p}{8} \right) \right]	~\Bigg|~ |J| = n_J }\\
	&\qquad\leq \exp\bigg( - \frac{m}{8} \left[ 1 - \exp\left( - \frac{n_J p}{8} \right) \right] \bigg).
\end{align*}
\end{proof}
	
We have shown that the set $\cT_i$ is sufficiently large with high probability.

\subsubsection{Completing the Proof of Lemma \ref{lem:event_Ti}}
\begin{proof}[Proof of Lemma \ref{lem:event_Ti}]
Conditioned on $\EJ$, $|J| \geq \frac{1}{4}n$. 
Since $m \geq 8$ and $np \geq 8(1+ \sqrt{3})^2 > 32 \log 2$,
\begin{equation}\label{eqn:simplification_Ti}
	\frac{m}{2} \left[ 1 - \exp\left( - \frac{np}{32} \right) \right] - 1  \geq \frac{m}{4} - 1 \geq \frac{m}{8}			\qquad \text{and}\qquad
	\Bigg\lceil \frac{1}{2} \bigg( \frac{np}{8} - \bigg\lfloor \sqrt{\frac{np}{8}}\bigg\rfloor - 1 \bigg)\Bigg\rceil
		\geq \frac{1}{2} \bigg( \frac{np}{8} - \sqrt{\frac{np}{8}} - 1 \bigg)
		\geq \frac{np}{32}.
\end{equation}
Therefore, for any $i \in [m]$, 
\begin{align*}
	\Prob{\EcTi^c \big| \EJ}
		&= \Prob{ |\cT_i| < \frac{1}{256} mnp ~\bigg|~ |J| \geq \frac{1}{4}n }\\
		&\stackrel{(a)}{\leq} \Prob{ \left| \cT_i \right| < 	\bigg(\frac{m}{2} \left[ 1 - \exp\left( - \frac{np}{32} \right) \right] - 1 \bigg)
			\bigg\lceil \frac{np}{16} - \frac{1}{2} \bigg( \bigg\lfloor \sqrt{\frac{np}{8}}\bigg\rfloor + 1 \bigg)\bigg\rceil  ~\bigg|~	|J| \geq \frac{1}{4}n }	\\
		&\stackrel{(b)}{\leq} \exp\bigg( - \frac{m}{8} \left[ 1 - \exp\left( - \frac{n p}{32} \right) \right] \bigg)\\
		&\stackrel{(c)}{\leq} \exp\bigg( - \frac{m}{16} \bigg).
\end{align*}
Here, (a) follows from \eqref{eqn:simplification_Ti}; (b) is the result of Lemma \ref{lem:T_large}; and (c) is trivial because $np \geq 32 \log 2$.
\end{proof}




\subsection{Proof of Lemma \ref{lem:event_dA}}\label{sec:proof_E4}
\subsubsection{Helper Lemma for the Proof of Lemma \ref{lem:event_dA}}
Lemma \ref{lem:dA_max} shows that $\max_{\left(i', j_1, j_2 \right) \in \cT} \big| A(i', j_1) - A(i', j_2) \big|$ diminishes as $mp, np \to \infty$ 
at the rate of $\max\{ (mp)^{-\frac{1}{2}}, (np)^{-\frac{1}{2}} \}$ with high probability.
	
\begin{lemma}\label{lem:dA_max}
	For any $i \in [m]$ and any $t > 0$,
	\begin{align*}
		\Prob{ \max_{\left(i', j_1, j_2 \right) \in \cT} \big| A(i', j_1) - A(i', j_2) \big| > t  
		+ \lipmax \bigg( \sqrt{\frac{2}{ n_J p}} + \frac{16 \sqrt{2 \pi} \constaa}{\sqrt{ n_{\cB} } }  \bigg) 
			~\bigg|~ |J| = n_J, \min_{j' \in [n]} | \cB^{j'} | = n_{\cB} }
		\leq 3n \exp \left( -\frac{nt^2}{8 \lipmax^2} \right).
	\end{align*}
\end{lemma}

\begin{proof}
First of all, we know that for any $i' \in [m]$ and any $j_1, j_2 \in [n]$,
\begin{equation}\label{eqn:F4.a}
	\big| A(i', j_1) - A(i', j_2) \big|	\leq \lipmax \left| \fcol{j_1} - \fcol{j_2} \right|.
\end{equation}
because the latent function is $\lipmax$-Lipschitz by our model assumption. Also, by the triangle inequality, we have
\begin{equation}\label{eqn:F4.b}
	\big| \fcol{j_1} - \fcol{j_2} \big|	
		\leq  \big| \fcol{j_1} - \hat{q}_{\marg}(j_1) \big| + \Big| \hat{q}_{\marg}(j_1) - \hat{q}_{\marg}(j_2) \Big| + \big| \hat{q}_{\marg}(j_2) - \fcol{j_2} \big| .
\end{equation}
Then
\begin{align}
	&\Prob{ \max_{\left(i', j_1, j_2 \right) \in \cT} \big| A(i', j_1) - A(i', j_2) \big| > t  
		+ \lipmax \bigg( \sqrt{\frac{2}{\left| J \right| p}} + \frac{16 \sqrt{2 \pi} \constaa}{\sqrt{ \min_{j' \in [n]} | \cB^{j'} | } }  \bigg) }	\nonumber\\
		&\qquad	= \Prob{ \max_{\left(i', j_1, j_2 \right) \in \cT_i} \big| A(i', j_1) - A(i', j_2) \big| > t 
				+ \lipmax \bigg( \sqrt{\frac{2}{\left| J \right| p}} + \frac{16 \sqrt{2 \pi} \constaa}{\sqrt{ \min_{j' \in [n]} | \cB^{j'} | } }  \bigg) 
					 }	\nonumber\\
		&\qquad	\stackrel{(a)}{\leq}
			\Prob{ \exists \left(i', j_1, j_2 \right) \in \cT \quad\text{such that}\quad 
			\big| \fcol{j_1} - \fcol{j_2} \big| > \frac{t}{\lipmax}  
				+ \sqrt{\frac{2}{\left| J \right| p}} + \frac{16 \sqrt{2 \pi} \constaa}{\sqrt{ \min_{j' \in [n]} | \cB^{j'} | } } }	\nonumber\\
		&\qquad\stackrel{(b)}{\leq}
		 	\Prob{ \exists \left(i', j_1, j_2 \right) \in \cT \quad\text{such that}\quad
			\Big| \hat{q}_{\marg}\left( j_1 \right) - \hat{q}_{\marg}\left( j_2 \right) \Big| > \sqrt{\frac{2}{\left| J \right| p}} }	\nonumber\\
		&\qquad\quad
			+ \Prob{ \exists j \in [n] \quad\text{such that}\quad 
				\left| \hat{q}_{\marg}(j) - \fcol{j} \right| >  \frac{t}{2 \lipmax} + \frac{8 \sqrt{2 \pi} \constaa}{\sqrt{ \min_{j' \in [n]} | \cB^{j'} | } } }
				\label{eqn:F4.term.b}
\end{align}
where (a) follows from \eqref{eqn:F4.a} and (b) follows from \eqref{eqn:F4.b}.

Next, we observe that $|\cB_i \cap J| \geq \frac{|J|p}{2}$ for any $i \in I$ by definition of $I$. Therefore, by definition\footnote{See Algorithm 
\ref{alg:setT_main} for its construction.} of $\cT$, for any $(i', j_1, j_2) \in \cT$,
\[	
	\big| \hat{q}_{\marg}\left( j_1 \right) - \hat{q}_{\marg}\left( j_2 \right) \big|	
		\leq \frac{1}{\sqrt{\left| \cB_i \cap J \right|}}	
		\leq \sqrt{\frac{2}{\left| J \right| p}}.	
\]
As a result,
\[	
	\Prob{ \exists \left(i', j_1, j_2 \right) \in \cT \quad\text{such that}\quad
			\Big| \hat{q}_{\marg}\left( j_1 \right) - \hat{q}_{\marg}\left( j_2 \right) \Big| > \sqrt{\frac{2}{\left| J \right| p}} }
		= 0.
\]
We can conclude the proof by establishing an upper bound on \eqref{eqn:F4.term.b} as
\begin{align*}
	&\Prob{ \exists j \in [n] \quad\text{such that}\quad 
		\left| \hat{q}_{\marg}(j) - \fcol{j} \right| >  \frac{t}{2 \lipmax} + \frac{8 \sqrt{2 \pi} \constaa}{\sqrt{ \min_{j' \in [n]} | \cB^{j'} | } }  }\\
		&\qquad \leq \sum_{j \in [n]} 
			\Prob{ \left| \hat{q}_{\marg}(j) - \fcol{j} \right| >  \frac{t}{2 \lipmax} + \frac{8 \sqrt{2 \pi} \constaa}{\sqrt{ \min_{j' \in [n]} | \cB^{j'} | } } }
				& \text{by the union bound}\\
		&\qquad \leq 3n \exp \left( -\frac{nt^2}{8 \lipmax^2} \right).	&\text{by Proposition \ref{prop:quantile_noisy}}
\end{align*}

\end{proof}

\subsubsection{Completing the Proof of Lemma \ref{lem:event_dA}}
\begin{proof}[Proof of Lemma \ref{lem:event_dA}]
	By definition of $\EJ, \Ecols$ and Lemma \ref{lem:dA_max}, it is easy to verify that
	\begin{align*}
		\Prob{ \EdA^c | \EJ, \Ecols }	
			&= \Prob{ \max_{\left(i', j_1, j_2 \right) \in \cT} \big| A(i', j_1) - A(i', j_2) \big| > t  
				+ \lipmax \bigg( \frac{2\sqrt{2}}{\sqrt{ n p}} + \frac{32 \sqrt{ \pi} \constaa}{\sqrt{ mp } }  \bigg)  
				~\bigg|~ |J| \geq \frac{n}{4}, \min_{j' \in [n]} |\cB^{j'}| \geq \frac{mp}{2} }\\
			&\leq 3n \exp \left( -\frac{nt^2}{8 \lipmax^2} \right).
	\end{align*}
	We conclude the proof by letting $t = 8 \lipmax \sqrt{\frac{\log n}{n}}$.
\end{proof}


\subsection{Proof of Lemma \ref{lem:event_dN}}\label{sec:proof_E5}
\begin{proof}[Proof of Lemma \ref{lem:event_dN}]
%
%
Note that $\big| N(i', j_1) - N(i', j_2) \big| \leq \big| N(i',j_1) \big| + \big| N(i',j_2) \big|$ by triangle inequality. 
Therefore, for any $t > 0$, 
\begin{align*}
	\Prob{ \max_{\left(i', j_1, j_2 \right) \in \cT} \big| N(i', j_1) - N(i', j_2) \big| > t }	
		&= \Prob{\exists (i', j_1, j_2) \in \cT \quad\text{such that}\quad \big| N(i', j_1) - N(i', j_2) \big| > t }\\
		&\stackrel{(a)}{\leq}	\Prob{\exists (i', j_1, j_2) \in \cT \quad \text{such that}\quad 
			\big| N(i',j_1) \big| \geq \frac{t}{2} \text{ or } \big| N(i',j_2) \big| \geq \frac{t}{2}}\\
		&\stackrel{(b)}{\leq} \Prob{\exists (i, j) \in [m] \times [n] \quad \text{such that}\quad 
			M(i,j) = 1 \text{ and } \big| N(i, j) \big| \geq \frac{t}{2}}\\
		&\stackrel{(c)}{\leq}	\sum_{(i,j) \in [m] \times [n] \atop M(i,j) = 1} \Prob{ \big| N(i, j) \big| \geq \frac{t}{2} } \\
		&\stackrel{(d)}{\leq}	2mn \exp \left( - \frac{t^2}{8 \sigma^2} \right).
\end{align*}
(a) follows from the observation above; (b) is trivial; (c) is obtained by the union bound; and (d) follows from the assumption of 
sub-gaussian noise. Choosing $t = 8\sigma \sqrt{\log (mn)}$ completes the proof.
\end{proof}


\subsection{Proof of Lemma \ref{lem:event_Ephi}}\label{sec:proof_E6}

\subsubsection{Helper Lemma for the proof of Lemma \ref{lem:event_Ephi}}
We present the following lemma with its proof postponed to Section \ref{sec:deferred_proof_ephi}.
\begin{lemma}\label{lem:Ephi}
	For any $i \in [m]$, for any positive integers $\Nnet$, and for any $\Lambda, s_1, s_2 \geq 0$,
	\begin{align*}
		&\Prob{ \sup_{t \in [-\Lambda, \Lambda]}  \big| \hatNest (t) - \phi_N(t) \big|^2  
			>   s_1 + s_2 + \frac{1}{\Nnet}\Big[ \Lambda^2\big(\dAi + \dNi \big)^2 + 2 \Lambda\sigma B \Big] ~ \bigg| ~ \big| \cT_i \big| = T_i } \\
			&\qquad\leq 2 \Nnet \exp \left( - \frac{T_i s_1^2}{ 2 \Lambda^2 \dAi^2 }  \right) + 2 \Nnet \exp \bigg(- \frac{T_i s_2^2}{2} \bigg).
	\end{align*}
\end{lemma}

\subsubsection{Completing the Proof of Lemma \ref{lem:event_Ephi}}

\begin{proof}[Proof of Lemma \ref{lem:event_Ephi}]
To begin with, we recall that given $i \in [m]$,
\begin{align*}
	&|\cT_i|	\geq 	\frac{1}{256}mnp,	&&\text{when conditioned on }\EcTi,	\\
	&\dAi	\leq		\max_{ (i',j_1, j_2) \in \cT} \big| A(i',j_1) - A(i',j_2) \big|
		\leq 	  \lipmax \bigg( \frac{32 \sqrt{ \pi} \constaa}{\sqrt{ mp } } + \frac{2\sqrt{2}}{\sqrt{ n p}} + 8\sqrt{\frac{\log n}{n}} \bigg),
			&&\text{when conditioned on }\EdA,	\\
	&\dNi	\leq		 \max_{ (i',j_1, j_2) \in \cT} \big| N(i',j_1) - N(i',j_2) \big|	\leq 	 8\sigma \sqrt{\log ( mn ) },	
		&&\text{when conditioned on }\EdN.
\end{align*}

Next, we let 
\begin{align*}
	\Lambda 	&= \frac{1}{h}	= (4\gamma)^{- \frac{1}{\beta}}(\log | \cB_i | )^{\frac{1}{\beta}},\\
	\Nnet 	&=	mn,\\
	s_1		&=	\frac{ 64  \lipmax}{h} \sqrt{\frac{\log(mn)}{mnp}}  
				\bigg( \frac{32 \sqrt{ \pi} \constaa}{\sqrt{ mp } } + \frac{2\sqrt{2}}{\sqrt{ n p}} + 8\sqrt{\frac{\log n}{n}} \bigg),	\\
	s_2		&=	64 \sqrt{\frac{\log(mn)}{mnp}}
\end{align*}
and plug them in Lemma \ref{lem:Ephi}. It is easy to verify that
\[
	\frac{ |\cT_i| s_1^2}{ 2 \Lambda^2 \dAi^2 } 
		\geq 8 \log(mn) 	\quad\text{and}\quad
	\frac{ |\cT_i| s_2^2}{2}
		\geq 8 \log(mn)
\]
and therefore, 
\begin{equation}\label{eqn:Ephi_tail}
	2 \Nnet \exp \left( - \frac{ |\cT_i| s_1^2}{ 2 \Lambda^2 \dAi^2 }  \right) + 2 \Nnet \exp \bigg(- \frac{ |\cT_i| s_2^2}{2} \bigg)
		\leq \frac{4}{m^7 n^7}.
\end{equation}

Lastly, we observe that
\begin{align*}
	\big(\dAi + \dNi \big)^2
		&\leq 2 \dAi^2 + 2 \dNi^2\\
		&= 2 \lipmax^2 \bigg( \frac{32 \sqrt{ \pi} \constaa}{\sqrt{ mp } } + \frac{2\sqrt{2}}{\sqrt{ n p}} + 8\sqrt{\frac{\log n}{n}} \bigg)^2
			+ 128 \sigma^2 \log(mn)\\
		&\leq 6 \lipmax^2 \bigg( \frac{1024 \pi \constaa^2}{mp} + \frac{8}{np} + \frac{64 \log n}{n} \bigg) + 128 \sigma^2 \log(mn).
\end{align*}
This observations yields that
\begin{align*}
	&\frac{1}{\Nnet}\Big[ \Lambda^2\big(\dAi + \dNi \big)^2 + 2 \Lambda\sigma B \Big]\\
		&\qquad\leq \frac{1}{mn} \Bigg\{ \frac{1}{h^2} \Bigg[  6 \lipmax^2 \bigg( \frac{1024 \pi \constaa^2}{mp} + \frac{8}{np} + \frac{64 \log n}{n} \bigg) 
			+ 128 \sigma^2 \log(mn) \Bigg] + \frac{2 \sigma B}{h}  \Bigg\}\\
		&\qquad= \frac{1}{mn} \Bigg\{ (4\gamma)^{-\frac{2}{\beta}} \Bigg[  6 \lipmax^2 \bigg( \frac{1024 \pi \constaa^2}{mp} + \frac{8}{np} 
			+ \frac{64 \log n}{n} \bigg) + 128 \sigma^2 \log(mn) \Bigg] \big( \log |\cB_i | \big)^{\frac{2}{\beta}} 
			+ 2 ( 4\gamma)^{-\frac{1}{\beta}} \sigma B \big( \log |\cB_i | \big)^{\frac{1}{\beta}}  \Bigg\}\\
		&\qquad\leq
			\frac{1}{mn} \Bigg\{ (4\gamma)^{-\frac{2}{\beta}} \Bigg[  6 \lipmax^2 \bigg( \frac{1024 \pi \constaa^2}{mp} + \frac{8}{np} 
			+ \frac{64 \log n}{n} \bigg) + 128 \sigma^2 \log(mn) \Bigg] \big( \log n \big)^{\frac{2}{\beta}} 
			+ 2 ( 4\gamma)^{-\frac{1}{\beta}} \sigma B \big( \log n \big)^{\frac{1}{\beta}}  \Bigg\}.
\end{align*}
\end{proof}

\section{Supplement 2 to the Proof of Theorem \ref{thm:ensure_condition}: \\Deferred Proof of Lemma \ref{lem:Ephi} }\label{sec:deferred_proof_ephi}

In this section, we prove Lemma \ref{lem:Ephi}. In Section \ref{sec:F5.prelim}, we sketch the outline of our proof and define some quantities 
to be used in the proof of Lemma \ref{lem:Ephi}. We present and prove intermediate lemmas in Section \ref{sec:F5.part1} and \ref{sec:F5.part2} 
and then combine them together to complete the proof of Lemma \ref{lem:Ephi} in Section \ref{sec:F5.complete}.

\subsection{Preliminary}\label{sec:F5.prelim}
Recall the definition of $\hat{\phi}_{N, i}(t)$ from \eqref{eqn:chN_est}: for $i \in [m]$, we let
\begin{equation*}
	\hat{\phi}_{N, i}(t) =	
		\Bigg| \frac{1}{\left| \cT_i \right|} \sum_{ \left(i', j_1, j_2 \right) \in \cT_i} 
			\cos \Big[ t \big( Z(i', j_1) - Z(i', j_2)  \big) \Big] \Bigg|^{\frac{1}{2}}.	
\end{equation*}
where $\cT_i$ is as defined in \eqref{eqn:set_Ti} and Algorithm \ref{alg:setT_main}. 

For the purpose of analysis, we define several functions related to $\hat{\phi}_{N, i}(t)$. For $i \in [m]$, we define
\begin{align}
	\HatNest (t)	&= \frac{1}{\left| \cT_i \right|} \sum_{ (i', j_1, j_2)  \in \cT_i} \cos \Big[ t \left( Z(i', j_1) - Z(i', j_2)  \right) \Big],	
				\label{eqn:Phi}\\
	\idealNest(t) 	&= \Bigg| \frac{1}{\left| \cT_i \right|} \sum_{ (i', j_1, j_2)  \in \cT_i} \cos \Big[ t \left( N(i', j_1) - N(i', j_2)  \right) \Big] \Bigg|^{\frac{1}{2}}, 
				\label{eqn:noise_ideal}\\
	\IdealNest(t) 	&= \frac{1}{\left| \cT_i \right|} \sum_{ (i', j_1, j_2) \in \cT_i} \cos \Big[ t \left( N(i', j_1) - N(i', j_2)  \right) \Big].		
				\label{eqn:Phi_star}
\end{align}
First, $\idealNest(t)$ defined in \eqref{eqn:noise_ideal} is the `ideal' estimator of $\phi_N$ which we would use if we had access to $N(i', j_1)$ 
and $N(i', j_2)$. However, $\idealNest(t)$ is not computable from data and thus we estimate $\phi_N$ with $\hat{\phi}_{N, i}$, instead. 
Observe that $\hat{\phi}_{N, i}(t) = \big| \HatNest (t) \big|^{\frac{1}{2}}$ and $\idealNest(t) = \big| \HatNest (t) \big|^{\frac{1}{2}}$ for all $t \in \Reals$.

We want to establish a uniform upper bound on $\big| \hatNest (t) - \phi_N(t) \big| $. Since $\phi_N(t) > 0$ by the supersmoothness assumption 
(see \eqref{eqn:model_supersmooth}) and $ \hatNest (t) \geq 0$ by its construction (see \eqref{eqn:chN_est}), we can see that
\begin{align*}
	\big| \hatNest (t) - \phi_N(t) \big|^2 
		&\leq	\big| \hatNest (t) + \phi_N(t) \big|\big| \hatNest (t) - \phi_N(t) \big|
		= \big| \hatNest (t)^2 - \phi_N(t)^2 \big|
		=	\big| |\HatNest (t)| - \phi_N(t)^2  \big| \\
		&\leq	\big| \HatNest (t) - \phi_N(t)^2  \big|	\\
		&\leq \big| \HatNest(t) - \IdealNest(t) \big| + \big| \IdealNest(t)  - \phi_N(t)^2 \big|
\end{align*}
for all $t \in \Reals$. Taking the supremum over an interval $[-\Lambda, \Lambda]$, we obtain 
\[
	\sup_{t \in [-\Lambda, \Lambda]} \big| \hatNest (t) - \phi_N(t) \big|^2  
		\leq \sup_{t \in [-\Lambda, \Lambda]} \big| \HatNest(t) - \IdealNest(t) \big| 
		+ \sup_{t \in [-\Lambda, \Lambda]} \big| \IdealNest(t)  - \phi_N(t)^2 \big|.
\]

We establish a probabilistic tail bound on $\sup_{t \in [-\Lambda, \Lambda]} \big| \HatNest(t) - \IdealNest(t) \big| $ in Section \ref{sec:F5.part1} and 
a similar upper bound on $ \sup_{t \in [-\Lambda, \Lambda]} \big| \IdealNest(t)  - \phi_N(t)^2 \big|$ in Section \ref{sec:F5.part2}, separately.

For the convenience of presenting our results, we also define the following quantities for each $i \in [m]$:
\begin{equation}\label{eqn:dNi}
	\dNi	:=   \max_{ (i', j_1, j_2) \in \cT_i} \Big| N(i', j_1) - N(i', j_2) \Big|
\end{equation}
and
\begin{equation}\label{eqn:dAi}
	\dAi	:=   \max_{ (i', j_1, j_2) \in \cT_i} \Big| A(i', j_1) - A(i', j_2) \Big|.
\end{equation}


\subsection{Intermediate Step 1: Establishing a Uniform Upper Bound on $\big| \HatNest(t) - \IdealNest(t) \big|$}\label{sec:F5.part1}	
In the proof of Lemma \ref{lem:practical_ideal} and Lemma \ref{lem:sup_noise}, we use the following shorthand notations: 
for $(i', j_1, j_2) \in [m] \times [n]^2$,
\begin{equation}\label{eqn:Delta_AN}
	\DAijj := A(i', j_1) - A(i', j_2)	\qquad\text{and}\qquad	\DNijj := N(i', j_1) - N(i', j_2).
\end{equation}

\begin{lemma}\label{lem:practical_ideal}
Given $i \in [m]$, let $\HatNest (t)$ and $\IdealNest(t)$ denote the functions as defined in \eqref{eqn:Phi} and \eqref{eqn:Phi_star}. 
Then for any $t \in \Reals$ and any $s > 0$,
\begin{align*}
	\bbP\bigg( \big| \HatNest (t) - \IdealNest(t)  \big| > s +  \max_{(i', j_1, j_2)\in \cT_i} \frac{t^2}{2} \dAi^2 ~ \bigg|~  |\cT_i| = T_i \bigg)
		\leq 2 \exp \left( - \frac{T_i s^2}{ 2 t^2 \dAi^2 }  \right).
\end{align*}
\end{lemma}

\begin{proof}
In this proof, we establish a high-probability upper bound on $\big| \HatNest (t) - \IdealNest(t) \big|$ by (1) finding an upper bound on 
its expectation and then (2) proving the concentration of $\big| \HatNest (t) - \IdealNest(t) \big|$ to its expectation.

Recall from our model that $Z(i,j) = A(i,j) + N(i,j)$ for $(i,j)$ such that $M(i,j) = 1$. For $(i', j_1, j_2) \in \cT_i$, we can write
\begin{align*}
	Z(i', j_1) - Z(i', j_2) = \big[ N(i', j_1) - N(i', j_2) \big] +  \big[ A(i', j_1) - A(i', j_2) \big].
\end{align*}
By definition of $\IdealNest$ and $\HatNest (t)$, and by the trigonometric identity $\cos a - \cos b = -2 \sin \frac{a+b}{2} \sin \frac{a-b}{2}$, 
\begin{align}
	&\HatNest (t) - \IdealNest(t) 		\nonumber\\
		&\qquad	= \frac{1}{\left| \cT_i \right|} \sum_{ \left(i', j_1, j_2) \right) \in \cT_i} 
			\bigg\{ \cos \Big( t \big[ Z(i', j_1) - Z(i', j_2) \big] \Big) - \cos\Big( t \big[ N(i', j_1) - N(i', j_2) \big] \Big)  \bigg\}		\nonumber\\
		&\qquad	= \frac{-2}{\left| \cT_i \right|} \sum_{ \left(i', j_1, j_2) \right) \in \cT_i} \sin \left( t \big[ N(i', j_1) - N(i', j_2) \big]  + \frac{ t \big[ A(i', j_1) - A(i', j_2) \big] }{2}  \right) 
			\sin \left( \frac{ t \big[ A(i', j_1) - A(i', j_2) \big] }{2}\right).	\label{eqn:Phi_difference}
\end{align}

First of all, we establish an upper bound on $\bbE \big[ \HatNest (t) - \IdealNest(t) \big]$. Note that the noise is independent of the signal 
(and hence, independent of the latent features) in our model. Therefore, $\big\{ N(i',j_1), N(i',j_2) \big\}_{(i',j_1, j_2) \in \cT_i}$ are independent 
of $\big\{ \frow{i}, \fcol{j} \big\}_{(i,j) \in [m] \times [n]}$. Now we consider the conditional expectation of $ \HatNest (t) - \IdealNest(t)$ given 
the latent features $\frow{1:m}$ and $\fcol{1:n}$.
\begin{align*}
	&\bbE \Big[ \HatNest (t) - \IdealNest(t) ~\Big|~ \frow{1:m}, \fcol{1:n} \Big]	\\
		&\qquad\stackrel{(a)}{=}  \bbE\Bigg[\frac{-2}{\left| \cT_i \right|} \sum_{ \left(i', j_1, j_2) \right) \in \cT_i} \sin \bigg( t \DNijj + \frac{t \DAijj }{2} \bigg) \sin \bigg( \frac{t \DAijj}{2}\bigg) 
			~\bigg|~ \frow{1:m}, \fcol{1:n} \Bigg]	\\
		&\qquad\stackrel{(b)}{=}  \bbE \Bigg[ \frac{-1}{\left| \cT_i \right|} \sum_{ \left(i', j_1, j_2) \right) \in \cT_i} 
			\left[ \sin \bigg( t \DNijj+ \frac{t \DAijj}{2} \bigg) + \sin \bigg( -t \DNijj+ \frac{t \DAijj}{2} \bigg) \right] 
				\sin \bigg(  \frac{t \DAijj}{2}\bigg) ~\bigg|~ \frow{1:m}, \fcol{1:n} \Bigg]		\\
		&\qquad\stackrel{(c)} {=} \bbE\Bigg[\frac{-2}{\left| \cT_i \right|} \sum_{ \left(i', j_1, j_2) \right) \in \cT_i} 
			\cos \big( t \DNijj \big) \sin^2 \bigg( \frac{t \DAijj}{2}\bigg)~\bigg|~ \frow{1:m}, \fcol{1:n} \Bigg].
\end{align*}
Here, (a) follows from \eqref{eqn:Phi_difference}; (b) follows from the symmetry of the noise distribution; and (c) follows from the trigonometric 
identity, $\sin (a + b) + \sin (a - b) = 2 \sin a \cos b$. Since $\big| \cos \big( t \DNijj \big)  \big| \leq 1$ and $ \big| \sin \big( \frac{t \DAijj}{2}\big) \big| 
\leq \big| \frac{t \DAijj}{2} \big|$, it follows that
\begin{align*}
	\bigg| \bbE \Big[ \HatNest (t) - \IdealNest(t) ~\Big|~ \frow{1:m}, \fcol{1:n} \Big] \bigg|
		&\leq \frac{2}{|\cT_i|} \sum_{(i', j_1, j_2) \in \cT_i} \bigg| \frac{t \DAijj}{2} \bigg|^2\\
		&\leq \max_{(i', j_1, j_2)\in \cT_i} \frac{t^2}{2}\big(A(i', j_1) - A(i', j_2) \big)^2.
\end{align*}
Note that this upper bound holds regardless of $\frow{1:m}, \fcol{1:n}$. Therefore,
\begin{equation}\label{eqn:expectation_term.1}
	\Big| \bbE \big[ \HatNest(t) - \IdealNest(t) \big] \Big|	\leq \max_{(i', j_1, j_2)\in \cT_i} \frac{t^2}{2} \dAi^2.
\end{equation}

Next, we show $ \HatNest(t) - \IdealNest(t) $ concentrates to $\bbE \big[ \HatNest (t) - \IdealNest(t) \big]$. Observe from \eqref{eqn:Phi_difference} 
that $\HatNest (t) - \IdealNest(t) $ is the sum of $|\cT_i|$ independent random variables where the independence is ensured due to the manner 
$\cT_i$ is constructed. Moreover, each summand is a bounded random variable as $\big| \sin x \big| \leq x \wedge 1$. Applying the Hoeffding's 
inequality (Lemma \ref{lem:Hoeffding_bounded}), we can see that for any $t \in \Reals$ and any $s \geq 0$,
\begin{align}
	\Prob{ \bigg|  \HatNest(t) - \IdealNest(t) - \Exp{ \HatNest(t) - \IdealNest(t) } \bigg| > s }
		&\leq 2 \exp \left( - \frac{ 2 s^2}{ \sum_{(i' ,j_1, j_2) \in \cT_i} \Big( \frac{ 2 }{|\cT_i|} t \DAijj \Big)^2 }  \right)		\nonumber\\
		&\leq 2 \exp \left( - \frac{ |\cT_i| s^2}{ 2 t^2 \dAi^2 }  \right).		\label{eqn:conc_bound}
\end{align}
		
We combine \eqref{eqn:expectation_term.1}, and \eqref{eqn:conc_bound} by the usual argument (triangle inequality + union bound) to 
conclude the proof. Consequently, for any $t \in \Reals$ and any $s > 0$,
\begin{align*}
	&\bbP\bigg( \big| \HatNest (t) - \IdealNest(t)  \big| > s +  \max_{(i', j_1, j_2)\in \cT_i} \frac{t^2}{2} \dAi^2 \bigg)\\
		&\qquad\leq \bbP\bigg( \Big| \bbE \big[ \HatNest(t) - \IdealNest(t) \big] \Big|	 >  \max_{(i', j_1, j_2)\in \cT_i} \frac{t^2}{2} \dAi^2 \bigg)
			+ \bbP \bigg( \Big| \big( \HatNest (t) - \IdealNest(t)  \big) - \bbE \big[ \HatNest(t) - \IdealNest(t) \big] \Big| > s \bigg)\\
		&\qquad\leq 2 \exp \left( - \frac{ |\cT_i| s^2}{ 2 t^2 \dAi^2 }  \right).
\end{align*}
\end{proof}

\begin{lemma}\label{lem:sup_noise}
Given $i \in [m]$, let $\HatNest (t)$ and $\IdealNest(t)$ denote the functions as defined in \eqref{eqn:Phi} and \eqref{eqn:Phi_star}. 
Then for any positive integer $\Nnet$ and for any $\Lambda, s > 0$,
\begin{align*}
	\Prob{ \sup_{t \in [-\Lambda, \Lambda]} \big| \HatNest (t) - \IdealNest(t)  \big|  
		> s +   \frac{\Lambda^2}{\Nnet} \Big(2 \dNi + \dAi \Big) \dAi ~ \bigg| ~ \big| \cT_i \big| = T_i }
		\leq 2 \Nnet \exp \left( - \frac{T_i s^2}{ 2 \Lambda^2 \dAi^2 }  \right).
\end{align*}
\end{lemma}


\begin{proof} [Proof of Lemma \ref{lem:sup_noise}]
First, we discretize the interval interval $[-\Lambda, \Lambda]$ by constructing an $\varepsilon$-net. For any positive integer $\Nnet$, we define
\begin{equation}\label{eqn:eps_net_Lambda}
	\cT_{\Nnet, \Lambda} \triangleq \left\{ \frac{(2k - 1 - \Nnet)\Lambda}{2\Nnet} \in \Reals ~\text{ such that }~ k \in [\Nnet] \right\}.
\end{equation}
Observe that $\cT_{\Nnet, \Lambda}$ forms a $\frac{\Lambda}{\Nnet}$-net of the interval $[-\Lambda, \Lambda]$. That is, 
\begin{enumerate}
	\item
	$\cT_{\Nnet, \Lambda} \subset [-\Lambda, \Lambda]$; and
	\item
	for any $z \in [-\Lambda, \Lambda]$, there exists $z' \in \cT_{\Nnet, \Lambda}$ such that $\left| z - z' \right| \leq \frac{\Lambda}{\Nnet}$.
\end{enumerate}
Moreover, we observe that $\left| \cT_{\Nnet, \Lambda} \right| = \Nnet$. 
		
Next, we consider the derivative of $\HatNest (t) - \IdealNest(t)$ with respect to $t$. First, we recall the notation $\DAijj$, $\DNijj$ introduced 
in \eqref{eqn:Delta_AN} and the expression of $\HatNest (t) - \IdealNest(t)$ as written in \eqref{eqn:Phi_difference}. Then we observe that
\begin{align*}
	\frac{d}{dt} \Big[ \HatNest (t) - \IdealNest(t) \Big]
	&= \frac{d}{dt} \Bigg[  \frac{-2}{| \cT_i |} \sum_{ (i', j_1, j_2) \in \cT_i} \sin \bigg( t \DNijj + \frac{t \DAijj}{2}  \bigg) \sin \bigg( \frac{t \DAijj}{2}\bigg) \Bigg]\\
	&= \frac{-2}{\left| \cT_i \right|} \sum_{ (i', j_1, j_2) \in \cT_i} \Bigg[ \bigg( \DNijj+ \frac{\DAijj}{2} \bigg)\cos \bigg( t \DNijj+ \frac{t \DAijj}{2} \bigg) \sin \bigg( \frac{t \DAijj}{2}\bigg)\\
		&\qquad\qquad\qquad\qquad\qquad\qquad\qquad + \frac{\DAijj}{2}\sin \bigg( t \DNijj+ \frac{t \DAijj}{2} \bigg) \cos \bigg( \frac{t \DAijj}{2}\bigg)\Bigg].
\end{align*}
Therefore,
\begin{align*}
	& \sup_{t \in [-\Lambda, \Lambda]} \left| \frac{d}{dt} \Big[ \HatNest (t) - \IdealNest(t) \Big] \right| \\
		&\qquad\stackrel{(a)}{\leq}  2 \sup_{t \in [-\Lambda, \Lambda]} \Bigg\{ \max_{ (i', j_1, j_2) \in \cT_i} 
				\bigg| \DNijj+ \frac{\DAijj}{2} \bigg| \bigg| \cos \bigg( t \DNijj+ \frac{t \DAijj}{2} \bigg) \bigg| \bigg| \sin \bigg( \frac{t \DAijj}{2}\bigg) \bigg|\\
			&\qquad\qquad\qquad\qquad\qquad\qquad\qquad 
				+ \max_{ (i', j_1, j_2) \in \cT_i} \bigg| \frac{\DAijj}{2} \bigg| \bigg| \sin \bigg( t \DNijj+ \frac{t \DAijj}{2} \bigg) \bigg| \bigg| \cos \bigg( \frac{t \DAijj}{2}\bigg) \bigg| \Bigg\}\\
		&\qquad\stackrel{(b)}{\leq} 2 \sup_{t \in [-\Lambda, \Lambda]} \Bigg\{  \max_{ (i', j_1, j_2) \in \cT_i } 	 \bigg| \DNijj+ \frac{\DAijj}{2} \bigg| \bigg| \frac{t \DAijj}{2}\bigg| 
			+  \max_{ (i', j_1, j_2) \in \cT_i} \bigg| \frac{\DAijj}{2} \bigg| \bigg| t \DNijj+ \frac{t \DAijj}{2} \bigg| \Bigg\}\\
		&\qquad\stackrel{(c)}{\leq} \sup_{t \in [-\Lambda, \Lambda]} \big|t\big| \Big(2 \dNi + \dAi \Big) \dAi\\
		&\qquad\leq \Lambda \Big(2 \dNi + \dAi \Big) \dAi.
\end{align*}
Here, (a) follows from the triangle inequality; (b) follows from the observation that $|\sin x | \leq |x|$ and $|\cos x| \leq 1$; and (c) follows from the definition of $\dNi, \dAi$; 
see \eqref{eqn:dNi} and \eqref{eqn:dAi}.
		
Since the function $\HatNest (t) - \IdealNest(t) $ is continuous, 
\begin{align}
	\sup_{t \in [-\Lambda, \Lambda]} \big| \HatNest (t) - \IdealNest(t)  \big| 
		&\leq \sup_{t \in \cT_{\Nnet, \Lambda}} \big| \HatNest (t) - \IdealNest(t)  \big| 
			+  \frac{\Lambda}{\Nnet}\sup_{t \in [-\Lambda, \Lambda]} \bigg| \frac{d}{dt} \left( \HatNest (t) - \IdealNest(t)  \right) \bigg|	\nonumber\\
		&\leq \sup_{t \in \cT_{\Nnet, \Lambda}} \big| \HatNest (t) - \IdealNest(t)  \big| +  \frac{\Lambda^2}{\Nnet} \Big(2 \dNi + \dAi \Big) \dAi.	\label{eqn:eps_net.2}
\end{align}
Therefore, for any $s > 0$,
\begin{align*}
	&\bbP\bigg( \sup_{t \in [-\Lambda, \Lambda]} \big| \HatNest (t) - \IdealNest(t)  \big|  
		> s +   \frac{\Lambda^2}{\Nnet} \Big(2 \dNi + \dAi \Big) \dAi ~ \bigg| ~ \big| \cT_i \big| = T_i \bigg)\\
		&\qquad\stackrel{(a)}{\leq} \bbP\bigg(  \sup_{t \in \cT_{\Nnet, \Lambda}} \big| \HatNest (t) - \IdealNest(t)  \Big| > s 
			~ \bigg| ~ \big| \cT_i \big| = T_i  \bigg)\\
		&\qquad\stackrel{(b)}{\leq} \sum_{t \in  \cT_{\Nnet, \Lambda}} \Prob{ \big| \HatNest (t) - \IdealNest(t)  \big| > s ~ \big| ~ \big| \cT_i \big| = T_i } \\
		&\qquad\stackrel{(c)}{\leq} 2 \sum_{t \in \cT_{\Nnet, \Lambda}} \exp \left( - \frac{T_i s^2}{ 2 t^2 \dAi^2 }  \right)	\\
		&\qquad\stackrel{(d)}{\leq} 2 \Nnet \exp \left( - \frac{T_i s^2}{ 2 \Lambda^2 \dAi^2 }  \right).	
\end{align*}
(a) follows from \eqref{eqn:eps_net.2}; (b) is the result of applying the union bound; (c) follows from Lemma \ref{lem:practical_ideal}; and we have
(d) because $|\cT_{\Nnet, \Lambda}| = \Nnet$.
\end{proof}


\subsection{Intermediate Step 2: Establishing a Uniform Upper Bound on $\big| \IdealNest(t)  - \phi_N(t)^2 \big|$}\label{sec:F5.part2}
\begin{lemma}\label{lem:ideal_true}
Given $i \in [m]$, let $\IdealNest(t)$ denote the function as defined in \eqref{eqn:Phi_star}. 
Then for any $t \in \Reals$ and any $s > 0$,
\begin{align*}
	\Prob{ \big|\IdealNest(t) - \phi_N(t)^2 \big| > s ~ \Big| ~ \big| \cT_i \big| = T_i } 
		\leq 2 \exp \bigg(- \frac{T_i s^2}{2} \bigg).
\end{align*}
\end{lemma}

\begin{proof}
		
From the symmetry of the noise distribution and the independence between $N(i', j_1)$ and $N(i', j_2)$ for $(i', j_1, j_2) \in \cT_i$, 
\begin{align*}
	\Exp{ \cos \big[ t \left( N(i', j_1) - N(i', j_2)  \right)}
		&= \Exp{ \frac{1}{2} \exp \big( t \left( N(i', j_1) - N(i', j_2) \right)  \big) +  \frac{1}{2} \exp \big( -t \left( N(i', j_1) - N(i', j_2) \right)  \big) }\\
		&= \frac{1}{2} \mathbb{E}\big[ \exp \big( tN(i', j_1) \big)\big] \mathbb{E}\big[\exp\big( -tN(i', j_2) \big) \big] 
			+ \frac{1}{2}\mathbb{E}\big[ \exp\big( - tN(i', j_1) \big) \big] \mathbb{E}\big[ \exp \big( tN(i', j_2) \big) \big]\\
		&= \phi_N(t)^2.
\end{align*}
Therefore, $\bbE\big[ \IdealNest(t) \big] = \phi_N(t)^2$ for all $t \in \Reals$.

Next, we consider how $\IdealNest(t)$ concentrates to $\bbE\big[ \IdealNest(t) \big]$. Since 
$\IdealNest(t) 	= \frac{1}{\left| \cT_i \right|} \sum_{ (i', j_1, j_2) \in \cT_i} \cos \Big[ t \left( N(i', j_1) - N(i', j_2)  \right) \Big]$ is the sum of $|\cT_i$| 
independent random variables, each of which is bounded within $[ -\frac{1}{|\cT_i|}, \frac{1}{|\cT_i|}]$, we can apply Hoeffding's inequality 
(Lemma \ref{lem:Hoeffding_bounded}) to achieve 
\[	
	\Prob{ \big|\IdealNest(t) - \phi_N(t)^2 \big| > s ~ \Big| ~ \big| \cT_i \big| = T_i } 
		\leq 2 \exp \bigg(- \frac{T_i s^2}{2} \bigg), \quad \text{for all }t \in \Reals. 	
\]		
\end{proof}

\begin{lemma}\label{lem:sup_ideal}
Given $i \in [m]$, let $\IdealNest(t)$ denote the function as defined in \eqref{eqn:Phi_star}. 
Then for any positive integer $\Nnet$ and for any $\Lambda, s \geq 0$,
\begin{align*}
	\Prob{ \sup_{t \in [-\Lambda, \Lambda]} \big| \idealNest(t) - \phi_N(t) \big|^2
		> s +  \frac{\Lambda}{\Nnet} \Big( \Lambda \dNi^2 + 2\sigma B \Big) ~ \bigg| ~ \big| \cT_i \big| = T_i }
		\leq 2 \Nnet \exp \bigg(- \frac{T_i s^2}{2} \bigg),
\end{align*}
where $\sigma, B$ are noise model parameters.
\end{lemma}

\begin{proof}
First, we discretize the interval interval $[-\Lambda, \Lambda]$ by constructing an $\varepsilon$-net in the same manner as in the proof of 
Lemma \ref{lem:sup_noise}, cf. \eqref{eqn:eps_net_Lambda}. For any positive integer $\Nnet$, we define
\[
	\cT_{\Nnet, \Lambda} \triangleq \left\{ \frac{(2k - 1 - \Nnet)\Lambda}{2\Nnet} \in \Reals ~\text{ such that }~ k \in [\Nnet] \right\}.
\]
Observe that $\cT_{\Nnet, \Lambda}$ forms a $\frac{\Lambda}{\Nnet}$-net of the interval $[-\Lambda, \Lambda]$. That is, 
\begin{enumerate}
	\item
	$\cT_{\Nnet, \Lambda} \subset [-\Lambda, \Lambda]$; and
	\item
	for any $z \in [-\Lambda, \Lambda]$, there exists $z' \in \cT_{\Nnet, \Lambda}$ such that $\left| z - z' \right| \leq \frac{\Lambda}{\Nnet}$.
\end{enumerate}
Moreover, we observe that $\left| \cT_{\Nnet, \Lambda} \right| = \Nnet$. 

Next, we consider the function $\IdealNest(t) - \phi_N^2(t)$ and its derivative with respect to $t$. First, we observe that
\begin{align}
	\bigg| \frac{d}{dt} \IdealNest(t) \bigg|
		&= \bigg|  \frac{1}{\left| \cT_i \right|} \sum_{ \left(i, j_1, j_2) \right) \in \cT_i} \frac{d}{dt} \cos \big[ t  \left( N(i, j_1) - N(i, j_2) \right) \big] \bigg| \nonumber\\
		&= \bigg|  \frac{-1}{\left| \cT_i \right|} \sum_{ \left(i, j_1, j_2) \right) \in \cT_i} \sin \big[ t  \left( N(i, j_1) - N(i, j_2) \right)  \big] \left( N(i, j_1) - N(i, j_2) \right) \bigg| \nonumber\\
		&\leq \max_{ \left(i, j_1, j_2) \right) \in \cT_i} \big| t \big | \big| N(i, j_1) - N(i, j_2) \big|^2	\nonumber\\
		&= |t| \dNi^2.	\label{eqn:rate_upper.1}
\end{align}
Also, we observe that
\begin{align}
	\bigg| \frac{d}{dt}  \phi_N^2(t) \bigg|	
		&= 2 \bigg| \phi_N(t) \frac{d}{dt} \phi_N(t) \bigg|	\nonumber\\
		&\leq 2 \big| \phi_N(t) \big| \bigg| \frac{d}{dt} \int_{-\infty}^{\infty} e^{-\img t x} dF_N(x) \bigg|	\nonumber\\
		&\leq 2 \big| \phi_N(t) \big| \bigg| \int_{-\infty}^{\infty} (-\img x) e^{\img t x} dF_N(x) \bigg|		\qquad\text{by definition of }\phi_N(t)	\nonumber\\
		&\leq 2 \big| \phi_N(t) \big| \int_{-\infty}^{\infty} \big| x \big|  dF_N(x)	\nonumber\\
		&\leq 2\sigma B \exp\big( - \gamma |t|^{\beta} \big).	\label{eqn:rate_upper.2}
\end{align}
The last line follows from the supersmoothness ($\phi_N(t) \leq B \exp(- \gamma |t|^{\beta})$) and the sub-gaussian assumption of the noise:
\[	
	\int_{-\infty}^{\infty} \big| x \big|  dF_N(x) = \Exp{ \big| N \big| } \leq \bbE\big[ N^2 \big]^{\frac{1}{2}} \leq \sigma.
\]

It follows from \eqref{eqn:rate_upper.1}, \eqref{eqn:rate_upper.2} and triangle inequality that
\begin{align*}
	\sup_{t \in [-\Lambda, \Lambda]} \bigg| \frac{d}{dt} \left( \IdealNest(t) - \phi_N^2(t) \right) \bigg|
		&\leq \sup_{t \in [-\Lambda, \Lambda]} \Bigg( \bigg| \frac{d}{dt} \IdealNest(t) \bigg| + \sup_{t \in [-\Lambda, \Lambda]} \bigg| \frac{d}{dt} \phi_N^2(t) \bigg| \Bigg)\\
		&\leq \sup_{t \in [-\Lambda, \Lambda]} \Big(  |t| \dNi^2 + 2\sigma B \exp\big( - \gamma |t|^{\beta} \big) \Big)\\
		&\leq \Lambda \dNi^2 + 2\sigma B.
\end{align*}		
Then by the continuity of $\IdealNest(t) - \phi_N^2(t)$, we can see that
\begin{align}
	\sup_{t \in [-\Lambda, \Lambda]} \Big| \IdealNest(t) - \phi_N^2(t) \Big| 
		&\leq \sup_{t \in \cT_{\Nnet, \Lambda}} \Big| \IdealNest(t) - \phi_N^2(t) \Big| 
			+  \frac{\Lambda}{\Nnet}\sup_{t \in [-\Lambda, \Lambda]} \bigg| \frac{d}{dt} \left( \IdealNest(t) - \phi_N^2(t) \right) \bigg|	\nonumber\\
		&\leq \sup_{t \in \cT_{\Nnet, \Lambda}} \Big| \IdealNest(t) - \phi_N^2(t) \Big| +  \frac{\Lambda}{\Nnet} \big( |\Lambda| \dNi^2 + 2\sigma B \big).	\label{eqn:eps_net.1}
\end{align}

Therefore, for any $s \geq 0$,
\begin{align*}
	&\bbP\bigg( \sup_{t \in [-\Lambda, \Lambda]} \big| \IdealNest(t) - \phi_N^2(t) \big| 
			> s +  \frac{\Lambda}{\Nnet} \big( \Lambda \dNi^2 + 2\sigma B \big) ~ \bigg| ~ \big| \cT_i \big| = T_i \bigg)\\
		&\qquad\stackrel{(a)}{\leq} \bbP\bigg( \sup_{t \in \cT_{\Nnet, \Lambda}} \big| \IdealNest(t) - \phi_N^2(t) \big| > s ~ \bigg| ~ \big| \cT_i \big| = T_i  \bigg)	\\
		&\qquad\stackrel{(b)}{\leq} \sum_{t \in  \cT_{\Nnet, \Lambda}} \Prob{ \big| \IdealNest(t) - \phi_N^2(t) \big| > s ~ \Big| ~ \big| \cT_i \big| = T_i }	\\
		&\qquad\stackrel{(c)}{\leq} 2 \sum_{t \in \cT_{\Nnet, \Lambda}} \exp \bigg(- \frac{T_i s^2}{2} \bigg)		\\
		&\qquad\stackrel{(d)}{\leq} 2 \Nnet \exp \bigg(- \frac{T_i s^2}{2} \bigg).
\end{align*}
(a) follows from \eqref{eqn:eps_net.1}; (b) is the result of applying the union bound; (c) follows from Lemma \ref{lem:ideal_true}; and we have
(d) because $|\cT_{\Nnet, \Lambda}| = \Nnet$.
\end{proof}


\subsection{Completing the Proof of Lemma \ref{lem:Ephi}}\label{sec:F5.complete}
\begin{proof}[Proof of Lemma \ref{lem:Ephi}]
We want to establish a uniform upper bound on $\big| \hatNest (t) - \phi_N(t) \big| $. Since $\phi_N(t) > 0$ by the supersmoothness assumption 
(see \eqref{eqn:model_supersmooth}) and $ \hatNest (t) \geq 0$ by its construction (see \eqref{eqn:chN_est}), we can observe that
\begin{align*}
	\big| \hatNest (t) - \phi_N(t) \big|^2 
		&\leq	\big| \hatNest (t) + \phi_N(t) \big|\big| \hatNest (t) - \phi_N(t) \big|
		= \big| \hatNest (t)^2 - \phi_N(t)^2 \big|
		=	\big| |\HatNest (t)| - \phi_N(t)^2  \big| \\
		&\leq	\big| \HatNest (t) - \phi_N(t)^2  \big|	\\
		&\leq \big| \HatNest(t) - \IdealNest(t) \big| + \big| \IdealNest(t)  - \phi_N(t)^2 \big|
\end{align*}
for all $t \in \Reals$. Taking the supremum over $t \in [-\Lambda, \Lambda]$, we obtain 
\[
	\sup_{t \in [-\Lambda, \Lambda]} \big| \hatNest (t) - \phi_N(t) \big|^2  
		\leq \sup_{t \in [-\Lambda, \Lambda]} \big| \HatNest(t) - \IdealNest(t) \big| 
		+ \sup_{t \in [-\Lambda, \Lambda]} \big| \IdealNest(t)  - \phi_N(t)^2 \big|.
\]

Applying the union bound, it follows that for any $s'_1, s'_2 > 0$,
\begin{align*}
	\Prob{ \sup_{t \in [-\Lambda, \Lambda]}  \big| \hatNest (t) - \phi_N(t) \big|^2  > s'_1 + s'_2 }
		&\leq \Prob{ \sup_{t \in [-\Lambda, \Lambda]}  \big| \HatNest(t) - \IdealNest(t) \big| > s'_1}\\
			&\quad+\Prob{ \sup_{t \in [-\Lambda, \Lambda]}  \big| \IdealNest(t)  - \phi_N(t)^2 \big| > s'_2}.
\end{align*}
We conclude the proof by applying Lemma \ref{lem:sup_noise} and Lemma \ref{lem:sup_ideal} with the choice of 
\[	
	s'_1 = s_1 +  \frac{\Lambda^2}{\Nnet}  \Big(2 \dNi + \dAi \Big) \dAi		, \qquad\text{and}\qquad
	s'_2 = s_2 + \frac{\Lambda}{\Nnet} \big( \Lambda \dNi^2 + 2\sigma B \big).
\]
\end{proof}

\section{Proof of Proposition \ref{prop:quantile_noisy}}\label{sec:proof_col_noisy}
\subsection{Helper Lemma}
Recall that we defined $\constaa = \frac{1}{ \lipmin } \big( \fmax - \fmin + 2\sigma \big)$.
\begin{lemma}\label{lem:helper_w}
	For $j, j' \in [n]$, let $W_{j, j'} $ denote the Bernoulli random variable such that 
	\[	W_{j, j'} = 1	\quad\text{if and only if}\quad 
			\Big| \big[ \heavi{ Z_{\marg}(j) - Z_{\marg}(j') } - \heavi{ \fcol{j} - \fcol{j'} } \big] \Big| \neq 0.
	\] 
	Then
	\[
		\Prob{W_{j,j'} =1 ~\Big|~ |\cB^j|=k_1, |\cB^{j'}| = k_2 } \leq 4 \sqrt{2 \pi} \constaa \bigg( \frac{1}{\sqrt{k_1}} + \frac{1}{\sqrt{k_2}} \bigg).
	\]
\end{lemma}

\begin{proof}
Let $g_{\marg}(y) := \int_0^1 g(x,y) dx$. Note that $g_{\marg}$ is $(\lipmin, \lipmax)$-biLipschitz, and hence, invertible. 
For $j \in [n]$, let $\zeta_j = g_{\marg}^{-1}\left( Z_{\marg}(j)\right)$ for the purpose of analysis. 
Note that $\zeta_j$ are quantities that are solely used for analysis.

Next, we note that $W_{j, j'} = 1$ if and only if $\sign \big( Z_{\marg}(j) - Z_{\marg}(j') \big) \neq \sign \big( \fcol{j} - \fcol{j'} \big)$ by definition of $W_{j,j'}$.  
Moreover, $\sign \big( Z_{\marg}(j) - Z_{\marg}(j') \big) = \sign \big( \zeta_j - \zeta_{j'} \big)$ because $g_{\marg}$ is strictly monotone increasing. 
Therefore, we focus on identifying the probability of the event that $\sign \big( \zeta_j - \zeta_{j'} \big) \neq  \sign \big( \fcol{j} - \fcol{j'} \big)$. 

For each $j \in [n]$, define $X_j := \zeta_j - \fcol{j}$. Since $g_{\marg}$ is $( \lipmin, \lipmax )$-biLipschitz, for any $s > 0$,
\begin{align}
	\Prob{X_j \geq s}	&\leq \Prob{ g_{\marg}\big( \zeta_j \big) - g_{\marg}\big(\fcol{j}\big) \geq \lipmin s}	\nonumber\\
		&\leq \Prob{ Z_{\marg}(j) - g_{\marg}\big(\fcol{j}\big) \geq \lipmin s}		\nonumber\\
		&= \Prob{	\frac{1}{|\cB^j|} \sum_{i' \in \cB^j} Z(i',j) - g_{\marg}\big(\fcol{j}\big) \geq \lipmin s }		\nonumber\\
		&\leq \Prob{	\frac{1}{|\cB^j|} \sum_{i' \in \cB^j} A(i',j) - g_{\marg}\big(\fcol{j}\big) \geq \frac{\fmax - \fmin }{\fmax - \fmin + 2 \sigma}\lipmin s }
			+ \Prob{	\frac{1}{|\cB^j|} \sum_{i' \in \cB^j} N(i',j) \geq \frac{2\sigma}{\fmax - \fmin + 2\sigma}\lipmin s }	\nonumber\\
		&\leq 2 \exp\left( - \frac{\big| \cB^j \big| \lipmin^2 s^2}{2( \fmax - \fmin + 2\sigma )^2} \right).	\label{eqn:conc.1}
\end{align}
Here, all the probabilities are conditional probabilities conditioned on $|\cB^j|$. We can achieve the same upper bound for $\Prob{X_j \leq -s}$. 

Since $X_j - X_{j'} = \big( \zeta_j - \zeta_{j'} \big) - \big( \fcol{j} - \fcol{j'} \big)$, we can see that 
$\sign \big( \zeta_j - \zeta_{j'} \big) \neq  \sign \big( \fcol{j} - \fcol{j'} \big)$ if and only if
\begin{align*}
	\begin{cases}
		X_j - X_{j'}	< - \big( \fcol{j} - \fcol{j'}\big),	& \text{when } \fcol{j} - \fcol{j'} \geq 0,\\
		X_j - X_{j'} > - \big( \fcol{j} - \fcol{j'}\big),	& \text{when } \fcol{j} - \fcol{j'} < 0. 
	\end{cases}
\end{align*}
Given $\fcol{j}$, observe that $\Prob{\fcol{j} - \fcol{j'} \geq 0} = \fcol{j}$ for any $j' \neq j$. Therefore, by the law of total probability, we can write
\begin{align}
	&\Prob{ \sign\big( \zeta^{(j)} - \zeta^{(j')} \big) \neq \sign\big( \fcol{j} - \fcol{j'} \big) ~\Big|~ |\cB^j| =k_1, |\cB^{j'}| =k_2 }	\nonumber\\
		&\qquad= \Prob{\fcol{j} - \fcol{j'} \geq 0 }
			\Prob{ X_j - X_{j'} < - \left( \fcol{j} - \fcol{j'}\right) \Big|~ \fcol{j} - \fcol{j'} \geq 0, |\cB^j| =k_1, |\cB^{j'}| =k_2} 	\label{eqn:proof_noisy_term.3}\\
		&\qquad\quad+  \Prob{\fcol{j} - \fcol{j'} < 0 }
			\Prob{ X_j - X_{j'} > - \left( \fcol{j} - \fcol{j'}\right) \Big|~ \fcol{j} - \fcol{j'} < 0, |\cB^j| =k_1, |\cB^{j'}| =k_2}.	\label{eqn:proof_noisy_term.4}
\end{align}
Note that $\Prob{\fcol{j} - \fcol{j'} \geq 0 } = \Prob{\fcol{j} - \fcol{j'} \geq 0 \big|~  |\cB^j| =k}$ by the independence between $\fcol{j}, \fcol{j'}$ and $M$.

Next, we establish an upper bound on \eqref{eqn:proof_noisy_term.3}. Since $X_j - X_{j'} < -2\tau$ implies either $X_j < -\tau$ or $X_{j'} > \tau$,
the conditional probability in \eqref{eqn:proof_noisy_term.3} can be upper bounded by
\begin{align*}
	&\Prob{ X_j - X_{j'} < - \left( \fcol{j} - \fcol{j'}\right) \bigg|~ \fcol{j} - \fcol{j'} \geq 0, |\cB^j| =k_1, |\cB^{j'}| =k_2}\\
	 	&\qquad\leq \Prob{ X_j  < - \frac{1}{2} \big( \fcol{j} - \fcol{j'} \big) \bigg|~ \fcol{j} - \fcol{j'} \geq 0, |\cB^j| =k_1 }\\
	 	&\qquad\quad 	+  \Prob{ X_{j'} > \frac{ 1}{2} \big(\fcol{j} - \fcol{j'} \big) \bigg|~ \fcol{j} - \fcol{j'} \geq 0, |\cB^{j'}| =k_2}.
\end{align*}
%

We obtain an upper bound on Eq. \eqref{eqn:proof_noisy_term.3} by finding upper bounds on each terms and then taking the union bound.
For that purpose, we observe that $\frac{d}{d\tau}\Prob{ \fcol{j} - \fcol{j'} \leq 2\tau ~\big|~ \fcol{j} - \fcol{j'} \geq 0 } = \frac{2}{\fcol{j}} \bI \big\{0 \leq \tau \leq \frac{\fcol{j}}{2}  \big\}$ 
and $\Prob{\fcol{j} - \fcol{j'} \geq 0 }=\fcol{j}$.
\begin{align}
	&\Prob{\fcol{j} - \fcol{j'} \geq 0 }
			\Prob{ X_j < - \frac{1}{2} \left( \fcol{j} - \fcol{j'}\right) \Big|~ \fcol{j} - \fcol{j'} \geq 0, |\cB^j| =k_1}	\nonumber\\
	&= \Prob{\fcol{j} - \fcol{j'} \geq 0 } \int_{\tau} \Prob{ X_j  <  -\tau ~\big|~ \fcol{j} - \fcol{j'} = 2\tau, |\cB^j| =k_1 } 
			\frac{d}{d\tau}\Prob{ \fcol{j} - \fcol{j'} \leq 2\tau ~\big|~ \fcol{j} - \fcol{j'} \geq 0 } d \tau	\nonumber\\
	&= 2\int_0^{\frac{\fcol{j}}{2}}  \Prob{ X_j  < - \tau ~\big|~ \fcol{j} - \fcol{j'} = 2\tau, |\cB^j| =k_1 } d\tau		\nonumber\\
	&\stackrel{(a)}{=} 2\int_0^{\frac{\fcol{j}}{2}}  \Prob{ X_j  < - \tau ~\big|~  |\cB^j| =k_1 } d\tau	 \nonumber\\
	&\stackrel{(b)}{\leq} 4\int_0^{\frac{\fcol{j}}{2}} \exp\left( - \frac{k_1 \tau^2}{ 2 \constaa^2} \right) d\tau	\nonumber\\
	&\leq 4\int_0^{\infty} \exp\left( - \frac{k_1 \tau^2}{2 \constaa^2} \right) d\tau	\nonumber\\
	&\stackrel{(c)}{=} \frac{ 2\sqrt{2 \pi} \constaa}{\sqrt{k_1}}.				\label{eqn:proof_noisy_term.5}
\end{align}
(a) follows from the observation that $X_j$ is independent of $\fcol{j'} $; (b) follows from \eqref{eqn:conc.1}; and (c) follows from the identity 
$\int_0^{\infty} e^{-ax^2} dx = \frac{1}{2}\sqrt{\frac{\pi}{a}}$.

We can obtain an upper bound for the other half of \eqref{eqn:proof_noisy_term.3} in a similar fashion.
\begin{equation}\label{eqn:proof_noisy_term.6}
	\Prob{\fcol{j} - \fcol{j'} \geq 0 } \Prob{ X_{j'} > \frac{1}{2} \left( \fcol{j} - \fcol{j'}\right) \Big|~ \fcol{j} - \fcol{j'} \geq 0, |\cB^{j'}| =k_2} 
		\leq \frac{ 2\sqrt{2\pi} \constaa}{\sqrt{k_2}}.
\end{equation}

Using \eqref{eqn:proof_noisy_term.5} and \eqref{eqn:proof_noisy_term.6}, we can find an upper bound on the term in \eqref{eqn:proof_noisy_term.3} as
\begin{align*}
	\Prob{\fcol{j} - \fcol{j'} \geq 0 }\Prob{ X_j - X_{j'} < - \left( \fcol{j} - \fcol{j'}\right) ~\Big|~ \fcol{j} - \fcol{j'} \geq 0, |\cB^j| =k_1, |\cB^{j'}| =k_2}
		&\leq   2\sqrt{2\pi} \constaa \bigg( \frac{1}{\sqrt{k_1}} + \frac{1}{\sqrt{k_2}} \bigg).
\end{align*}
		
We can obtain the same upper bound on the term in \eqref{eqn:proof_noisy_term.4} by noticing that
\begin{align*}
	&\Prob{\fcol{j} - \fcol{j'} < 0 } = 1-\fcol{j},	\qquad\text{and}\\
	&\frac{d}{d\tau} \Prob{ \fcol{j} - \fcol{j'} \geq -2\tau ~\big|~ \fcol{j} - \fcol{j'} < 0 } = \frac{2}{1 - \fcol{j}} \Ind{0 \leq \tau \leq \frac{1 - \fcol{j}}{2} }.
\end{align*}
		
Consequently, we can conclude that
\begin{align*}
	\Prob{ W_{j,j'} = 1 ~\Big|~ |\cB^j| =k_1, |\cB^{j'}| =k_2 } 
		&= \Prob{ \sign\left( \zeta_j - \zeta_{j'} \right) \neq \sign\left( \fcol{j} - \fcol{j'} \right)  ~\Big|~ |\cB^j| =k_1, |\cB^{j'}| =k_2 }\\
		&\leq 4 \sqrt{2 \pi} \constaa \bigg( \frac{1}{\sqrt{k_1}} + \frac{1}{\sqrt{k_2}} \bigg) .
\end{align*}
\end{proof}

\subsection{Completing the Proof of Proposition \ref{prop:quantile_noisy}}

\begin{proof}[Proof of Proposition \ref{prop:quantile_noisy}]
Recall the definition of $\hat{q}_{\marg}(j)$ from \eqref{eqn:Z_marg} and \eqref{eqn:estimate_marg}: for $j \in [n]$, we defined
\[
	\hat{q}_{\marg}(j) = \frac{1}{n}\sum_{j' = 1}^n \heavi{ Z_{\marg}(j) - Z_{\marg}(j') },
\]
where
\[
	Z_{\marg}(j) = \begin{cases}
		\frac{ \sum_{i=1}^m M(i,j) Z(i,j)}{\sum_{i=1}^m M(i,j)},	& \text{if } \cB^j \neq \emptyset\\
		\frac{1}{2},									&	\text{if } \cB^j = \emptyset.
	\end{cases}
\]
For the purpose of analysis, we define an imaginary estimator for $\fcol{j}$ as
\[
	\hat{q}_{*}(j) = \frac{1}{n}\sum_{j' = 1}^n \heavi{ \fcol{j} - \fcol{j'} }.
\]
		
By triangle inequality, the error in quantile estimation is upper bounded as
\[	\left| \hat{q}_{\marg}(j) - \fcol{j} \right| \leq \Big| \hat{q}_{\marg}(j) - \hat{q}_*(j) \Big| + \left| \hat{q}_*(j) - \fcol{j}\right|.	\]
If both $\big| \hat{q}_{\marg}(j) - \hat{q}_*(j) \big| \leq t_1$ and $\big| \hat{q}_*(j) - \fcol{j}\big| \leq t_2$ are satisfied, then 
$\big| \hat{q}_{\marg}(j) - \fcol{j} \big| \leq t_1 + t_2$. Therefore, for any $t_1, t_2 > 0$,
\begin{align}
	\Prob{ \big| \hat{q}_{\marg}(j) - \fcol{j} \big| > t_1 + t_2 }	
		&\leq \mathbb{P}\Big({\big| \hat{q}_{\marg}(j) - \hat{q}_*(j) \big| > t_1 }\Big) + \Prob{ \big| \hat{q}_*(j) - \fcol{j} \big| > t_2 }.	\label{eqn:partition1}	
\end{align}
It is easy to verify that $\hat{q}_{*}(j)$ exponentially concentrates to $\fcol{j}$ as $n \to \infty$, e.g., by McDiarmid's inequality:
\begin{equation}\label{eqn:quant_a}
	 \Prob{ \big| \hat{q}_*(j) - \fcol{j} \big| > t_2 }  	\leq 2 \exp \left( - 2 nt_2^2 \right).
\end{equation}
Therefore, it suffices to establish an upper bound for the first term in \eqref{eqn:partition1}, i.e., a probabilistic tail upper bound for 
$\left| \hat{q}_{\marg}(j) - \hat{q}_*(j) \right|$.

We observe that
\begin{align*}
	\left| \hat{q}_{\marg}(j) - \hat{q}_*(j) \right| 
		&= \bigg| \frac{1}{n} \sum_{j' = 1}^n  \left[ \heavi{ Z_{\marg}(j) - Z_{\marg}(j') } - \heavi{ \fcol{j} - \fcol{j'} } \right] \bigg|\\
		&\leq \frac{1}{n} \sum_{j' = 1}^n \bigg| \left[ \heavi{ Z_{\marg}(j) - Z_{\marg}(j') } - \heavi{ \fcol{j} - \fcol{j'} } \right] \bigg|.
\end{align*}
For each pair $(j, j') \in [n]^2$, define a Bernoulli random variable $W_{j, j'}$ such that 
\[
	W_{j, j'} = 1 \quad\text{if and only if}\quad
	\Big| \big[ \heavi{ Z_{\marg}(j) - Z_{\marg}(j') } - \heavi{ \fcol{j} - \fcol{j'} } \big] \Big| \neq 0.
\]
Then we can observe that $0 \leq \Big| \big[ \heavi{ Z_{\marg}(j) - Z_{\marg}(j') } - \heavi{ \fcol{j} - \fcol{j'} } \big] \Big| \leq W_{j,j'}$ and therefore,
\begin{align*}
	\mathbb{P}\Big( \left| \hat{q}_{\marg}(j) - \hat{q}_*(j) \right| > t_1 \Big) 
		& \leq \mathbb{P} \bigg( \sum_{j'=1}^n W_{j,j'} > nt_1 \bigg).
\end{align*}
By Lemma \ref{lem:helper_w}, we have
\[
	\Prob{W_{j,j'} =1 ~\Big|~ |\cB^j|=k_1, |\cB^{j'}| = k_2 } \leq 4 \sqrt{2 \pi} \constaa \bigg( \frac{1}{\sqrt{k_1}} + \frac{1}{\sqrt{k_2}} \bigg).
\]
Therefore, we may write
\[
	\Prob{W_{j,j'} =1 ~\Big|~ M } \leq \frac{8 \sqrt{2 \pi} \constaa}{\sqrt{k_*}}.
\]
for all $j, j' \in [n]$ with $k_* = \min_{j' \in [n]} | \cB^{j'}|$.

Applying the binomial Chernoff bound,
\begin{align}
	\mathbb{P} \bigg( \sum_{j'=1}^n W_{j,j'} > nt_1  ~\Big|~ M  \bigg)
		&= \mathbb{P} \bigg( \sum_{j'=1}^n W_{j,j'} - \bbE \Big[ \sum_{j'=1}^n W_{j,j'} \Big] > nt_1 - \bbE \Big[ \sum_{j'=1}^n W_{j,j'} \Big]  ~\Big|~ M  \bigg)	\nonumber\\
		&\leq \mathbb{P} \bigg( \sum_{j'=1}^n W_{j,j'} - \bbE \Big[ \sum_{j'=1}^n W_{j,j'} \Big] > n \bigg( t_1 - \frac{8 \sqrt{2 \pi} \constaa}{\sqrt{k_*} } \bigg)  ~\Big|~ M  \bigg)	\nonumber\\
		&\leq \exp \left( -2 n \bigg( t_1 - \frac{8 \sqrt{2 \pi} \constaa}{\sqrt{k_*} } \bigg)^2  \right).	\label{eqn:quant_b}
\end{align}

All in all, we can conclude that for $t > 0$, 
\begin{align*}
	&\Prob{ \big| \hat{q}_{\marg}(j) - \fcol{j} \big| > t + \frac{8 \sqrt{2 \pi} \constaa}{\sqrt{ \min_{j' \in [n]} | \cB^{j'} | } } ~\bigg|~ M }\\
		&\qquad\leq \Prob{ \left| \hat{q}_{\marg}(j) - \hat{q}_*(j) \right| > \frac{t}{2} + \frac{8 \sqrt{2 \pi} \constaa}{\sqrt{ \min_{j' \in [n]} | \cB^{j'} | } } ~\bigg|~ M }
			+  \Prob{ \big| \hat{q}_*(j) - \fcol{j} \big| > \frac{t}{2} ~\bigg|~ M } \\
		&\qquad\leq 3\exp \left( -\frac{nt^2}{2} \right)
\end{align*}
by plugging \eqref{eqn:quant_a} and \eqref{eqn:quant_b} back to \eqref{eqn:partition1} with the choice of 
$t_1 = \frac{t}{2} + \frac{8 \sqrt{2 \pi} \constaa}{\sqrt{ \min_{j' \in [n]} | \cB^{j'} | } }$ and $t_2 = \frac{t}{2}$.

\end{proof}
\section{Proof of Corollary \ref{coro:answer.2}}\label{sec:proof_ME}

\subsection{Helper Lemma}
In this section, we establish a probabilistic tail bound on $ \sup_{i \in [m]} \sup_{j \in [n]} | \hat{A}(i,j) - A(i,j) | $.

\begin{lemma}\label{lem:prob_max_norm}
For $i \in [m]$, let $\hat{F}_i$ be defined as in \eqref{eqn:ECDF_unknown_noise} with $\hat{\phi}_{N}(t) = \hat{\phi}_{N, i}(t)$ as described 
in Section \ref{sec:noise_estimation}, cf. \eqref{eqn:chN_est}. Suppose that the kernel bandwidth 
$h = (4\gamma)^{\frac{1}{\beta}}(\log |\cB_i|)^{-\frac{1}{\beta}}$ and the ridge parameter $\rho = \frac{1}{B} |\cB_i|^{-\frac{9}{20}}$. 
For $j \in [n]$, let $\hat{q}_{\marg}(j)$ be defined as in \eqref{eqn:estimate_marg}.

 If $|\cB_i| \geq 1024$ and $mp$ and $n$ are sufficiently large so that $\valphiAA + \valphiBB \leq \frac{1}{B} |\cB_i|^{-\frac{9}{20}}$, 
 then for any $t > 0$,
	\begin{align*}
		&\bbP\Bigg(	\sup_{(i, j) \in [m] \times [n] } \big| \hat{F}_i^{-1} \big( \hat{q}_{\marg}(j) \big) - A(i,j)\big| 
			>  t + \constdd 	\Bigg) \\
		&\qquad\leq
			\Big( 2 m ( 2np )^{\frac{9}{20}} \big[ \log (2np) \big]^{\frac{2}{\beta}} + 3n \Big)
			\exp \Bigg( - \frac{1}{2}\bigg( \frac{t}{\constee } \bigg)^2 \Bigg)\\
		&\qquad\quad
			+ \frac{3}{n^7} + \frac{6}{m^7n^7} + m \exp \left( - \frac{np}{8} \right) + 2n \exp \left( - \frac{mp}{8} \right) + \exp \left( - \frac{m}{16} \right) + 2 \exp \left( - \frac{n}{16} \right).
	\end{align*}
	where
	\begin{align*}
		\constdd	&=	\lipmax \bigg\{ \big( \constbb + \constcc \big) \Big[ \log \big( \frac{np}{2} \big) \Big]^{-\frac{1}{\beta}} 
				+ 4\constcc  \frac{\big[ \log ( 2np) \big]^{\frac{1}{\beta}}}{ \big( \frac{np}{2} \big)^{\frac{1}{5}} } 
				+  \frac{8 \sqrt{2 \pi} \constaa}{\sqrt{ \frac{mp}{2} } } \bigg\}\\
		\constee	&=	\lipmax \Big[ \frac{\constcc [ \log (2np) ]^{\frac{1}{\beta}}}{( \frac{np}{2} )^{\frac{1}{20}}} + \frac{1}{\sqrt{n}} \Big].
	\end{align*}
\end{lemma}

\begin{proof}
Fix $(i,j) \in [m] \times [n]$. Let $\theta^* \equiv F_i \Big( \hat{F}_i^{-1} \big( \hat{q}_{\marg}(j) \big) \Big)$. 
Since $\hat{F}_i^{-1} \big( \hat{q}_{\marg}(j) \big) = g\big( \frow{i}, \theta^* \big)$ and $A(i,j) = g \big( \frow{i}, \fcol{j} \big)$,
\begin{align*}
	\Big| \hat{F}_i^{-1} \big( \hat{q}_{\marg}(j) \big) - A(i,j) \Big|
		&= \Big| g\big( \frow{i}, \theta^* \big) - g \big( \frow{i}, \fcol{j} \big) \Big|\\
		&\stackrel{(a)}{\leq} \lipmax \big| \theta^* - \fcol{j} \big|\\
		&\leq \lipmax \Big( \big| \theta^* - \hat{q}_{\marg}(j) \big| + \big| \hat{q}_{\marg}(j) - \fcol{j} \big| \Big)\\
		&\stackrel{(b)}{\leq} \lipmax \Big( \sup_{z \in [\fmin, \fmax] } \big| \hat{F}_i (z) - F_i(z) \big| + \big| \hat{q}_{\marg}(j) - \fcol{j} \big| \Big)
\end{align*}
where (a) follows from the assumption that $g$ is $(\lipmin, \lipmax)$-bi-Lipschitz and (b) is the result of the following observation: since 
$ \hat{F}_i^{-1} \big( \hat{q}_{\marg}(j) \big) \in [\fmin, \fmax]$ by definition of $\hat{F}_i$, and therefore,
\[
	\big| \hat{q}_{\marg}(j) - \theta^*\big| 	\leq \sup_{z \in [\fmin, \fmax] } \big| \hat{F}_i (z) - F_i(z) \big|.
\]

Observe that
\[
	\sup_{z \in [\fmin, \fmax] } \big| \hat{F}_i (z) - F_i(z) \big| \leq s_1	\quad\text{and}\quad
	\big| \hat{q}_{\marg}(j) - \fcol{j} \big| \leq s_2	\qquad\implies\qquad 
	\big| \hat{F}_i^{-1} \big( \hat{q}_{\marg}(j) \big) - A(i,j)\big| \leq \lipmax \big( s_1 + s_2 \big).
\]
The contraposition of the above proposition reads as
\begin{align*}
	&\exists(i,j) \in [m] \times [n] 	\quad\text{such that}\quad		\big| \hat{F}_i^{-1} \big( \hat{q}_{\marg}(j) \big) - A(i,j)\big| > \lipmax \big( s_1 + s_2 \big)	\\
	&\implies	
		\exists i \in [m]	\quad\text{such that}\quad 	\sup_{z \in [\fmin, \fmax] } \big| \hat{F}_i (z) - F_i(z) \big| > s_1	\qquad\text{or}\qquad
		\exists j \in [n]	\quad\text{such that}\quad 	\big| \hat{q}_{\marg}(j) - \fcol{j} \big| > s_2.	 
\end{align*}
Therefore, for any $s_1, s_2 \geq 0$,
\begin{align}
	&\bbP\bigg(	\sup_{(i, j) \in [m] \times [n] } \big| \hat{F}_i^{-1} \big( \hat{q}_{\marg}(j) \big) - A(i,j)\big| > \lipmax \big( s_1 + s_2 \big)	\bigg)	\nonumber\\
	&\qquad\leq
		 \bbP\bigg(	\sup_{i \in [m]}	\sup_{z \in [\fmin, \fmax] } \big| \hat{F}_i (z) - F_i(z) \big| > s_1	\bigg)
		 +	\bbP\bigg(		\sup_{j \in [n]}	\big| \hat{q}_{\marg}(j) - \fcol{j} \big| > s_2	\bigg).	\label{eqn:term.ME.a}
\end{align}
It remains to further simplify \eqref{eqn:term.ME.a} with an appropriate choice of $s_1$ and $s_2$.

We pause and define a new event for conditioning. Recall that we defined $\Erow := \cap_{i=1}^m \{ \frac{np}{2} \leq |\cB_i| \leq 2np \}$ and observed 
$ \Prob{\Erow^c} \leq 2m \exp \big( -\frac{np}{8} \big)$ in the proof of Corollary \ref{coro:answer.1}, cf. \eqref{eqn:upper_row}. 
Let $\Ecol := \cap_{j=1}^n \{ |\cB^j| \geq \frac{mp}{2} \}$. We observe that $|\cB^j| = \sum_{i=1}^m \Ind{M_{ij} = 1}$ is the sum of $m$ independent 
Bernoulli random variables for each $j \in [n]$. We have $\Prob{ |\cB^j| < \frac{mp}{2} } \leq \exp \big( -\frac{mp}{8} \big)$ by the binomial Chernoff bound. 
Applying the union bound, 
\begin{equation}\label{eqn:upper_col}
	\Prob{\Ecol^c} \leq \sum_{j=1}^n \Prob{ |\cB^j| < \frac{mp}{2} }  \leq n \exp \Big( -\frac{mp}{8} \Big).
\end{equation}

With this observation, we further simplify \eqref{eqn:term.ME.a} as
\begin{align}
	& \bbP\bigg(	\sup_{i \in [m]}	\sup_{z \in [\fmin, \fmax] } \big| \hat{F}_i (z) - F_i(z) \big| > s_1	\bigg)
		 +	\bbP\bigg(		\sup_{j \in [n]}	\big| \hat{q}_{\marg}(j) - \fcol{j} \big| > s_2	\bigg)	\nonumber\\
	&\qquad\leq
		\bbP\bigg(	\sup_{i \in [m]}	\sup_{z \in [\fmin, \fmax] } \big| \hat{F}_i (z) - F_i(z) \big| > s_1	~\Big|~	\Ephi \cap \Erow \bigg) +	\Prob{ \Ephi^c \cup \Erow^c}	\nonumber\\
		&\qquad\qquad	+	\bbP\bigg(		\sup_{j \in [n]}	\big| \hat{q}_{\marg}(j) - \fcol{j} \big| > s_2	~\Big|~	\Ecol	\bigg) + \Prob{\Ecol^c}	\nonumber\\
	&\qquad\leq
		\sum_{i=1}^m \bbP\bigg( \sup_{z \in [\fmin, \fmax] } \big| \hat{F}_i (z) - F_i(z) \big| > s_1	~\Big|~	\Ephi \cap \Erow \bigg)
		+	\sum_{j=1}^n \bbP\bigg(	\big| \hat{q}_{\marg}(j) - \fcol{j} \big| > s_2	~\Big|~	\Ecol	\bigg)	\label{eqn:term.ME.b}\\
		&\qquad\qquad		+ 	\Prob{ \Ephi^c}	+ \Prob{\Erow^c} + \Prob{\Ecol^c}.		\nonumber
\end{align}

Let $\gamma$ denote a parameter in $[0,1]$ and let 
\begin{align*}
	s_1 &= \gamma \frac{ t}{\lipmax} + \big( \constbb + \constcc \big) \Big[ \log \big( \frac{np}{2} \big) \Big]^{-\frac{1}{\beta}} 
			+ 4\constcc  \frac{\big[ \log ( 2np) \big]^{\frac{1}{\beta}}}{ \big( \frac{np}{2} \big)^{\frac{1}{5}} }	\qquad\text{and}\\
	s_2 &= (1- \gamma )\frac{ t}{\lipmax} + \frac{8 \sqrt{2 \pi} \constaa}{\sqrt{ \frac{mp}{2} } }.
\end{align*}
With the choice of $s_1, s_2$, we obtain the following upper bound on \eqref{eqn:term.ME.b}:
\begin{align}
	&\sum_{i=1}^m \bbP\bigg( \sup_{z \in [\fmin, \fmax] } \big| \hat{F}_i (z) - F_i(z) \big| > s_1	~\Big|~	\Ephi \cap \Erow \bigg)
		+	\sum_{j=1}^n \bbP\bigg(	\big| \hat{q}_{\marg}(j) - \fcol{j} \big| > s_2	~\Big|~	\Ecol	\bigg)	\nonumber\\
	&\qquad\leq
		2 m ( 2np )^{\frac{9}{20}} \big[ \log (2np) \big]^{\frac{2}{\beta}}
				\exp\Bigg( - \frac{ \big( \frac{np}{2} \big)^{\frac{1}{10}} }{2 \constcc^2 \big[ \log (2np) \big]^{\frac{2}{\beta}} }  \frac{ \gamma^2 t^2}{\lipmax^2}  \Bigg)
		+	3n \exp \bigg( -\frac{n}{2}  \frac{ (1-\gamma)^2 t^2}{\lipmax^2} \bigg).		\label{eqn:term.ME.c}
\end{align}
Now we choose $\gamma \in [0,1]$ so that the two terms in the upper bound in \eqref{eqn:term.ME.c} are balanced. Equating the exponents in the two terms, 
we obtain a quadratic equation in $\gamma$. Letting $C_1 := \frac{ \big( \frac{np}{2} \big)^{\frac{1}{10}} }{2 \constcc^2 \big[ \log (2np) \big]^{\frac{2}{\beta}} }$ and 
$C_2 := \frac{n}{2}$, we may write the quadratic equation as $- A \gamma^2 t^2 = - B (1-\gamma)^2 t^2$, or equivalently, $(B-A)\gamma^2 - 2B \gamma + B = 0$. 
Since $\gamma \in [0,1]$, this equation admits one valid root:
\begin{align*}
	\gamma &= \frac{B - \sqrt{AB}}{B-A}
		= \frac{\sqrt{B}}{\sqrt{A} + \sqrt{B}}.
\end{align*}
With the choice of $\gamma$, \eqref{eqn:term.ME.c} simplifies to 
\begin{equation}\label{eqn:term.ME.d}
	\Big( 2 m ( 2np )^{\frac{9}{20}} \big[ \log (2np) \big]^{\frac{2}{\beta}} + 3n \Big)
		\exp \Bigg( - \frac{1}{2}\bigg( \frac{t}{\constee } \bigg)^2 \Bigg).
\end{equation}
where $\constee =  \lipmax \Big[ \frac{\constcc [ \log (2np) ]^{\frac{1}{\beta}}}{( \frac{np}{2} )^{\frac{1}{20}}} + \frac{1}{\sqrt{n}} \Big]$.

With \eqref{eqn:term.ME.d} as an upper bound on \eqref{eqn:term.ME.b} and the upper bounds on $\Prob{ \Ephi^c}	+ \Prob{\Erow^c} + \Prob{\Ecol^c}$ 
from Theorem \ref{thm:ensure_condition}, \eqref{eqn:upper_row}, and \eqref{eqn:upper_col}, we can complete the proof using  \eqref{eqn:term.ME.a}.
\end{proof}

\subsection{Proof of Corollary \ref{coro:answer.2}}
\begin{proof}[Proof of Corollary \ref{coro:answer.2}]
Letting $\hat{A} = \psi(Z)$, we have $\hat{A}(i,j) = \hat{F}_i^{-1} \big( \hat{q}_{\marg}(j) \big)$ for $(i, j) \in [m] \times [n]$. 
We recall the definition of $\RiskME(\psi)$ from \eqref{eqn:loss_psi} and see that
\begin{align*}
	\RiskME(\psi) &=  \bbE_{Z} \bigg[ \sup_{(i,j) \in [m] \times [n]} | \hat{A}(i,j) - A(i,j) |^2 \bigg].
\end{align*}
Since $0 \leq | \hat{A}(i,j) - A(i,j) |^2 \leq (\fmax - \fmin)^2$, it follows that 
\begin{align}
	\bbE_{Z} \bigg[ \sup_{(i, j) \in [m] \times [n]} | \hat{A}(i,j) - A(i,j) |^2 \bigg]
		= \int_{0}^{(\fmax - \fmin)^2} \Prob{ \sup_{(i,j) \in [m] \times [n]} | \hat{A}(i,j) - A(i,j) |^2 > t} dt		\nonumber\\
		= \int_{0}^{\fmax - \fmin} 2 s \Prob{ \sup_{(i,j) \in [m] \times [n]} | \hat{A}(i,j) - A(i,j) | > s} ds	\label{eqn:ME_int.a}
\end{align}
by the changing of variables $s = \sqrt{t}$.

Next, we use the upper bound obtained in Lemma \ref{lem:prob_max_norm} to find an upper bound on \eqref{eqn:ME_int.a}.
\begin{align}
	&\int_{0}^{\fmax - \fmin} 2 s \Prob{ \sup_{(i,j) \in [m] \times [n]} | \hat{A}(i,j) - A(i,j) | > s} ds		\nonumber\\
	 	&\stackrel{(a)}{\leq} \int_0^{\constdd} 2s ~ds + \int_{\constdd}^{\fmax - \fmin} 2 s \Prob{ \sup_{(i,j) \in [m] \times [n]} | \hat{A}(i,j) - A(i,j) | > s} ds		\nonumber\\
		&\stackrel{(b)}{\leq} \int_0^{\constdd} 2s ~ds		\label{eqn:ME_int.b}\\
		&\qquad	+ 2 \Big( 2 m ( 2np )^{\frac{9}{20}} \big[ \log (2np) \big]^{\frac{2}{\beta}} + 3n \Big) 
			\int_{0}^{\fmax - \fmin - \constdd} \big( s + \constdd \big) \exp \Bigg( - \frac{1}{2}\bigg( \frac{s}{\constee } \bigg)^2 \Bigg) ds		\label{eqn:ME_int.c}\\
		&\qquad	+ 2 \bigg[  \frac{3}{n^7} + \frac{6}{m^7n^7} + m \exp \left( - \frac{np}{8} \right) + 2n \exp \left( - \frac{mp}{8} \right) 
			+ \exp \left( - \frac{m}{16} \right) + 2 \exp \left( - \frac{n}{16} \right) \bigg] \int_{\constdd}^{\fmax - \fmin} 2 s ~ds.	\label{eqn:ME_int.d}
\end{align}
Here, (a) follows from the trivial upper bound on probability, i.e., $\Prob{ \sup_{(i,j) \in [m] \times [n]} | \hat{A}(i,j) - A(i,j) | > s} \leq 1$; and (b) follows from the upper bound 
in Lemma \ref{lem:prob_max_norm}.

Now we establish upper bounds on the integral in \eqref{eqn:ME_int.b}, \eqref{eqn:ME_int.c}, and \eqref{eqn:ME_int.d} separately.
\begin{itemize}
	\item
		First, it is easy to compute the integral in \eqref{eqn:ME_int.b}: 
		\begin{align*}
			 \int_0^{\constdd} 2s ~ds = \constdd^2.
		\end{align*}
	
	\item
		Second, we recall the well known facts that for $a > 0$,
		\begin{align*}
			\int_0^{\infty} \exp \bigg( - \frac{s^2}{2a^2} \bigg) ds = a \sqrt{\frac{\pi}{2}}		\qquad\text{and}\qquad
			\int_0^{\infty} s \exp \bigg( - \frac{s^2}{2a^2} \bigg) ds = a^2.
		\end{align*}
		Then we observe that the integral in \eqref{eqn:ME_int.c} is bounded above as
		\begin{align*}
			&\int_{0}^{\fmax - \fmin - \constdd} \big( s + \constdd \big) \exp \Bigg( - \frac{1}{2}\bigg( \frac{s}{\constee } \bigg)^2 \Bigg) ds	\\
				&\qquad\leq \int_{0}^{\infty} \big( s + \constdd \big) \exp \Bigg( - \frac{1}{2}\bigg( \frac{s}{\constee } \bigg)^2 \Bigg) ds	\\
				&\qquad = \constee^2 + \sqrt{\frac{\pi}{2}}\constdd \constee.
		\end{align*}
	\item
		Lastly, we use the following simple upper bound on the integral in \eqref{eqn:ME_int.d}:
		\begin{align*}
			\int_{\constdd}^{\fmax - \fmin} 2 s ~ds
				\leq \int_{0}^{\fmax - \fmin} 2 s ~ds
				= (\fmax - \fmin )^2.
		\end{align*}
\end{itemize}

All in all, we establish the following upper bound:
\begin{align*}
	\RiskME(\psi)
		&\leq \constdd^2 
			+ 2 \Big( 2 m ( 2np )^{\frac{9}{20}} \big[ \log (2np) \big]^{\frac{2}{\beta}} + 3n \Big) \bigg( \sqrt{\frac{\pi}{2}}\constdd + \constee \bigg) \constee\\
		&\quad	+ 2 \big( \fmax - \fmin \big)^2	\bigg[  \frac{3}{n^7} + \frac{6}{m^7n^7} + m \exp \left( - \frac{np}{8} \right) + 2n \exp \left( - \frac{mp}{8} \right) 
			+ \exp \left( - \frac{m}{16} \right) + 2 \exp \left( - \frac{n}{16} \right) \bigg].
\end{align*}
\end{proof}

\section{Some Known Facts from Literature}

\subsection{Well-known Facts about Distribution}\label{appx:distribution}
\subsubsection{Basic Definitions}
In this section, we briefly restate some basic facts about random variables and their associated distributions. 
We let $(\Omega, \cF, P)$ denote the probability space of interest.
\begin{definition}[Random variable]
	A random variable $X: \Omega \to E$ is a measurable function from a set of possible outcomes $\Omega$ to 
	a measurable space $E$. When $E= \Reals$, we call $X$ a real-valued random variable.
\end{definition}

For a real-valued random variable $X$, we can define its distribution function, whose evaluation at $x$ is 
the probability that $X$ will take a value less than or equal to $x$.
\begin{definition}[Cumulative distribution function (CDF)]\label{defn:CDF}
	The cumulative distribution function of a real-valued random variable $X$ is defined as a function 
	$F_X: \Reals \to [0,1]$ such that
	\[	F_X(x) = \Prob{X \leq x}.	\] 
\end{definition}

Every cumulative distribution function $F$ is non-decreasing, right-continuous, $\lim_{x \to -\infty} F(x) = 0$, 
and $\lim_{x \to \infty} F(x) = 1$. Conversely, every function with these four properties is a CDF, i.e., 
a random variable can be defined so that the function is the CDF of that random variable.

We define a pseudo-inverse of the distribution function as follows and call it the quantile function.
\begin{definition}[Quantile function]\label{defn:Quantile}
	Given a distribution function $F: \Reals \to [0,1]$, the associated quantile function $Q:(0,1) \to \Reals$ is defined as
	\[	Q(p) = \inf \left\{ x \in \Reals: p \leq F(x) \right\}.	\]
\end{definition}
If the function $F$ is continuous and strictly monotone increasing, then the infimum can be replaced by the minimum and 
$Q = F^{-1}$, i.e., $p = F(x)$ if and only if $x = Q(p)$.

Note that the CDF can be expressed as the expectation of an indicator function, $F_X(x) = \Exp{\Ind{X \leq x}}$. In particular, 
when $F$ is absolutely continuous, then there exists a Lebesgue-integrable function $f(x)$ such that
\[	F(b) - F(a) = \Prob{a < X \leq b} = \int_a^b f(x) dx,	\]
for all real numbers $a$ and $b$. The function $f$ is the (Radon-Nikodym) derivative of $F$, and it is called the 
probability density function of distribution of $X$.

Also, there is an alternative way to describe a random variable (in the Fourier domain).
\begin{definition}[Characteristic function]\label{defn:ch_ftn}
	The characteristic function $\phi_X: \Reals \to \Cx$ for a real-valued random variable is defined as the expected 
	value of $e^{itX}$, where $i$ is the imaginary unit, and $t \in \Reals$ is the argument of the characteristic function:
	\begin{align*}
		\phi_X(t) &= \Exp{e^{itX}}
			= \int_{\Reals} e^{itx} dF_X(x)
			= \int_{\Reals} e^{itx} f_X(x) dx
			= \int_0^1 e^{itQ_X(p)} dp.
	\end{align*}
\end{definition}

If random variable $X$ has a probability density function $f_X$, then the characteristic function is the Fourier transform 
with sign reversal in the complex exponential (note that the constant differs from the usual convention for the Fourier transform).

\subsubsection{Empirical Distribution}
\begin{definition}[Empirical CDF]
	Suppose that $X_1, \ldots, X_n$ ($n$ is a natural number) are real-valued independent and identically distributed 
	random variables with common cumulative distribution function $F$. We let $F_n$ denote the empirical distribution 
	function associated with $\{X_1, \ldots, X_n\}$, which is defined as
	\[	F_n(x) = \frac{1}{n} \sum_{i=1}^n \Ind{X_i \leq x}, \quad, \forall x \in \Reals.	\]
\end{definition}

It is known that the empirical distribution function converges to the true underlying distribution function, which the samples 
are drawn from. The following concentration results known as the Dvoretzky-Kiefer-Wolfowitz (DKW) inequality quantifies 
the rate of convergence of $F_n$ to $F$ with respect to the uniform norm as $n$ tends to infinity. 
This result strengthens the Glivenko-Cantelli theorem.
\begin{lemma}[Dvoretzky-Kiefer-Wolfowitz]\label{lem:DKW}
	Given a natural number $n$, let $X_1, \ldots, X_n$ be real-valued independent and identically distributed random variables 
	with common cumulative distribution function $F$. Then for every $t > 0$,
	\[	\Prob{\sup_{x \in \Reals} \left| F_n(x) - F(x) \right| > \eps} \leq 2 e^{-2n t^2}.	\]
\end{lemma}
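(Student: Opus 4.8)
The plan is to reduce the bound, in two stages, to a one-sided deviation inequality for the \emph{uniform} empirical process, where a sharp exponential tail bound is available.

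\textbf{Reduction to a uniform distribution.} First I would eliminate the dependence on $F$ via the quantile coupling. Let $F^{-1}(u) = \inf\{y \in \Reals : F(y) \ge u\}$ and let $U_1, \dots, U_n$ be i.i.d.\ uniform on $[0,1]$; then $F^{-1}(U_i)$ has law $F$, and the identity $F^{-1}(u) \le x \iff u \le F(x)$ (valid for all $u, x$ by monotonicity and right-continuity of $F$) gives $\hat{F}_n(x) = \hat{U}_n(F(x))$, where $\hat{U}_n$ is the empirical CDF of $U_1, \dots, U_n$. Consequently $\sup_{x \in \Reals} |\hat{F}_n(x) - F(x)| = \sup_{x} |\hat{U}_n(F(x)) - F(x)| \le \sup_{t \in [0,1]} |\hat{U}_n(t) - t|$, so it suffices to prove $\Prob{\sup_{t \in [0,1]} |\hat{U}_n(t) - t| > \eps} \le 2 e^{-2n\eps^2}$.

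\textbf{Reduction to one side.} Since $1 - U_i$ are again i.i.d.\ uniform, the random variables $\sup_t (\hat{U}_n(t) - t)$ and $\sup_t (t - \hat{U}_n(t))$ are equal in law, so a union bound over the two one-sided events reduces the claim to $\Prob{\sup_{t \in [0,1]} (\hat{U}_n(t) - t) > \eps} \le e^{-2n\eps^2}$; the factor $2$ in the statement is exactly this union bound. Writing $U_{(1)} \le \dots \le U_{(n)}$ for the order statistics, $\sup_t(\hat{U}_n(t) - t) = \max_{1 \le k \le n}(k/n - U_{(k)})$, so the target is a one-sided tail bound for uniform order statistics (equivalently, for the one-sided maximum of a Brownian-bridge-like process in the finite-$n$ setting).

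\textbf{The one-sided bound and the main obstacle.} A soft argument already delivers a DKW-type inequality with a nonoptimal constant: cover the range of $F$ by a grid $0 = t_0 < \dots < t_m = 1$ with $t_j - t_{j-1} \le \eps/2$ ($m = O(1/\eps)$ points), bound $\hat{U}_n(t) - t$ between consecutive grid points using monotonicity of $\hat{U}_n$ and of $\mathrm{id}$, apply Hoeffding's inequality to each Bernoulli average $\hat{U}_n(t_j)$, and union bound, which yields a bound of the form $C\eps^{-1} e^{-n\eps^2/2}$ --- essentially the original Dvoretzky--Kiefer--Wolfowitz result up to the value of $C$. The hard part is removing the $\eps^{-1}$ prefactor and improving the exponent to exactly $-2n\eps^2$, which is Massart's theorem: one first notes the bound is vacuous when $\eps \le \sqrt{(\ln 2)/(2n)}$, and for larger $\eps$ analyzes the last index at which the uniform empirical process crosses the line $t + \eps$, uses a reflection-type identity, and estimates the resulting binomial tail with care. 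Since reproducing this argument in full is not needed here, I would cite Dvoretzky, Kiefer and Wolfowitz (1956) for the inequality with an unspecified constant and Massart (1990) for the optimal constant $C = 2$; for every application of the lemma in this paper, any universal constant in place of $2$ would suffice.
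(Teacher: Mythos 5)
The paper does not prove this lemma at all: it simply states it as a classical fact, noting in the surrounding text that it is due to Dvoretzky, Kiefer and Wolfowitz (1956) with an unspecified constant and that Massart (1990) obtained the sharp constant $C=2$. Your proposal takes essentially the same route — reduce to the uniform case, then defer to the same two references for the sharp one-sided bound — and the extra reduction detail you supply, while not present in the paper, is standard and correct.
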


\subsection{Sub-Gaussian Random Variable and the Chernoff Bound}
We define a class of random variables, whose tail behavior is easy to control.
\begin{definition}[Sub-Gaussian random variable]
	A random variable $X$ with mean $\mu = \Exp{X}$ is called sub-Gaussian with parameter $\sigma$ if there is 
	a positive constant $\sigma$ such that
	\[	\bbE\big[e^{\lambda(X-\mu)}\big] \leq e^{\frac{\lambda^2 \sigma^2}{2}}, \quad \forall \lambda \in \Reals.	\]
	We will call $\sigma$ the sub-Gaussian parameter of $X$.
\end{definition}
An application of the Chernoff bound leads to
\[	\Prob{X-\mu \geq t} \leq \inf_{\lambda > 0} \frac{\bbE\big[e^{\lambda(X-\mu)}\big]}{e^{\lambda t}}.	\]
It is possible to achieve the same upper bound for $\Prob{ X - \mu \leq -t } = \Prob{ -(X-\mu) \geq t }$.
We can conclude that a sub-Gaussian random variable satisfies that for all $t \in \Reals$,
\[	\Prob{|X - \mu| \geq t} \leq 2 e^{-\frac{t^2}{2\sigma^2}}.	\]
The class of sub-Gaussian random variables subsumes Gaussian random variable and any bounded random variables.

\subsubsection{Hoeffding-type Inequalities}

Now, we present several forms of concentration inequalities for the sum of independent random variables. Essentially they are 
all Chernoff bounds, tailored to specific random variable assumptions.
\begin{lemma}[Binomial Chernoff bound]\label{lem:Chernoff}
	Let $X = \sum_{i=1}^n X_i$, where $X_i = 1$ with probability $p_i$, and $X_i = 0$ with probability $1 - p_i$, and $X_i$'s 
	are independent. Let $\mu = \Exp{X} = \sum_{i=1}^n p_i$. Then
	\begin{enumerate}
		\item Upper tail:	$ \Prob{X \geq (1+\delta) \mu} \leq \exp\left(-\frac{\delta^2}{2+\delta}\mu \right)$ for all $\delta > 0$.
		\item Lower tail: 	$ \Prob{X \leq (1-\delta) \mu} \leq \exp\left(-\frac{\delta^2}{2}\mu \right)$ for all $0 < \delta < 1$.
	\end{enumerate}
\end{lemma}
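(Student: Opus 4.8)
The plan is to apply the exponential moment (Chernoff) method already set up in the preceding text. First I would invoke the exponential Markov bound: for any $\lambda > 0$ and any $a$, $\Prob{X \geq a} \leq e^{-\lambda a}\Exp{e^{\lambda X}}$. Because the $X_i$ are independent, the moment generating function factorizes as $\Exp{e^{\lambda X}} = \prod_{i=1}^n \Exp{e^{\lambda X_i}} = \prod_{i=1}^n \left(1 + p_i(e^{\lambda} - 1)\right)$, and the elementary inequality $1 + x \leq e^{x}$ applied termwise gives $\Exp{e^{\lambda X}} \leq \exp\left(\mu(e^{\lambda}-1)\right)$, where $\mu = \sum_{i=1}^n p_i = \Exp{X}$.

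For the upper tail, combining the two bounds yields $\Prob{X \geq (1+\delta)\mu} \leq \exp\left(\mu(e^{\lambda}-1) - \lambda(1+\delta)\mu\right)$ for every $\lambda > 0$. Minimizing the exponent over $\lambda$ gives the optimal value $\lambda = \ln(1+\delta)$, which is positive precisely because $\delta > 0$, and produces $\Prob{X \geq (1+\delta)\mu} \leq \left(e^{\delta}(1+\delta)^{-(1+\delta)}\right)^{\mu}$. It then remains to check the purely scalar inequality $e^{\delta}(1+\delta)^{-(1+\delta)} \leq \exp\left(-\tfrac{\delta^2}{2+\delta}\right)$, that is, $(1+\delta)\ln(1+\delta) - \delta \geq \tfrac{\delta^2}{2+\delta}$ for all $\delta > 0$. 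Both sides vanish at $\delta = 0$, so it suffices to compare derivatives; this reduces after two differentiations to $(\delta+2)^3 \geq 8(1+\delta)$ on $\delta \geq 0$, which is immediate since both sides agree at $\delta = 0$ and the left side has the larger derivative there and thereafter.

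For the lower tail I would run the same argument on $-X$: for $\lambda > 0$, $\Prob{X \leq (1-\delta)\mu} = \Prob{e^{-\lambda X} \geq e^{-\lambda(1-\delta)\mu}} \leq e^{\lambda(1-\delta)\mu}\Exp{e^{-\lambda X}} \leq \exp\left(\mu(e^{-\lambda}-1) + \lambda(1-\delta)\mu\right)$. Optimizing over $\lambda$ gives $\lambda = -\ln(1-\delta) > 0$ (legitimate since $0 < \delta < 1$) and the bound $\Prob{X \leq (1-\delta)\mu} \leq \left(e^{-\delta}(1-\delta)^{-(1-\delta)}\right)^{\mu}$. Finally the scalar inequality $e^{-\delta}(1-\delta)^{-(1-\delta)} \leq e^{-\delta^2/2}$ is equivalent to $(1-\delta)\ln(1-\delta) \geq -\delta + \tfrac{\delta^2}{2}$, which follows at once from the expansion $(1-\delta)\ln(1-\delta) = -\delta + \sum_{k \geq 2}\frac{\delta^{k}}{k(k-1)}$, all of whose terms beyond $k=2$ are nonnegative for $0 < \delta < 1$.

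The argument is routine Chernoff-bounding; the only steps requiring any care are the two scalar estimates at the end, and of these the upper-tail one, $(1+\delta)\ln(1+\delta) - \delta \geq \tfrac{\delta^2}{2+\delta}$, is the main obstacle, resolved by the nested derivative comparison sketched above.
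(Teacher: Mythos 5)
The paper does not actually supply a proof of this lemma: it is stated in Appendix C as a standard background fact, immediately after the text sketches the exponential-moment (Chernoff) method but without carrying it out for the Bernoulli sum. Your argument is precisely the standard Chernoff derivation that that preamble points at, executed correctly. In particular, your two scalar inequalities check out: for the upper tail, $f(\delta)=(1+\delta)\ln(1+\delta)-\delta-\tfrac{\delta^2}{2+\delta}$ satisfies $f(0)=f'(0)=0$ and $f''(\delta)=\tfrac{1}{1+\delta}-\tfrac{8}{(2+\delta)^3}\ge 0$ on $\delta\ge 0$ because $(2+\delta)^3-8(1+\delta)$ vanishes at $0$ and has derivative $3(2+\delta)^2-8\ge 4>0$; and the lower-tail series identity $(1-\delta)\ln(1-\delta)=-\delta+\sum_{k\ge 2}\tfrac{\delta^k}{k(k-1)}$ is exact and immediately yields $(1-\delta)\ln(1-\delta)\ge -\delta+\tfrac{\delta^2}{2}$ on $(0,1)$. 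One minor aside: the MGF bound $\Exp{e^{\lambda X}}\le\exp(\mu(e^\lambda-1))$ via $1+x\le e^x$ is tight only when the $p_i$ are all small, but this costs nothing here since the bound is still valid and the subsequent optimization is over the exponent, not the prefactor. The proof is correct and is the natural one for this statement.
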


There is a more general version of concentration inequality that applies to sub-gaussian random variables.
\begin{lemma}[Hoeffding's inequality for sub-Gaussian ranom variables]\label{lem:Hoeffding_subG}
	Let $X_1, \ldots, X_n$ be $n$ independent random variables such that $X_i$ has mean $\mu_i$ and sub-Gaussian parameter 
	$\sigma_i$ and let $X = \sum_{i=1}^n X_i$. Then for any $t > 0$,
	\[	\Prob{X - \Exp{X} \geq t} \leq \exp\left( - \frac{t^2}{2\sum_{i=1}^n \sigma_i^2} \right).	\]
	The same upper bound holds for $\Prob{X - \Exp{X} \leq -t}$.
\end{lemma}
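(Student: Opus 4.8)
The final statement to prove is Lemma~\ref{lem:Hoeffding_subG}, Hoeffding's inequality for sums of independent sub-Gaussian random variables.

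\medskip

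The plan is to follow the standard Chernoff (exponential Markov) argument, exploiting the defining moment generating function bound for sub-Gaussian variables. First I would fix $t>0$ and $\lambda>0$ and write $\Prob{X - \Exp{X} \geq t} = \Prob{e^{\lambda(X-\Exp{X})} \geq e^{\lambda t}}$, then apply Markov's inequality (stated earlier in the excerpt) to get $\Prob{X-\Exp{X}\geq t} \leq e^{-\lambda t}\,\Exp{e^{\lambda(X-\Exp{X})}}$. Next I would use independence of the $X_i$ to factor the expectation: $\Exp{e^{\lambda(X - \Exp{X})}} = \prod_{i=1}^n \Exp{e^{\lambda(X_i - \mu_i)}}$, since $X - \Exp{X} = \sum_{i=1}^n (X_i - \mu_i)$. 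Now I invoke the sub-Gaussian hypothesis on each $X_i$, namely $\Exp{e^{\lambda(X_i - \mu_i)}} \leq e^{\lambda^2 \sigma_i^2/2}$ for all $\lambda \in \Reals$, to bound the product by $\exp\left(\frac{\lambda^2}{2}\sum_{i=1}^n \sigma_i^2\right)$. Combining gives $\Prob{X - \Exp{X} \geq t} \leq \exp\left(-\lambda t + \frac{\lambda^2}{2}\sum_{i=1}^n \sigma_i^2\right)$.

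\medskip

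The last step is to optimize over $\lambda > 0$. The exponent $-\lambda t + \frac{\lambda^2}{2}S$, where $S = \sum_{i=1}^n \sigma_i^2$, is a convex quadratic in $\lambda$ minimized at $\lambda^* = t/S$, yielding the value $-t^2/(2S)$. Substituting back gives the first claimed inequality $\Prob{X - \Exp{X} \geq t} \leq \exp\left(-\frac{t^2}{2\sum_{i=1}^n \sigma_i^2}\right)$. For the lower tail, I would apply the same argument to the random variables $-X_i$, which are also sub-Gaussian with the same parameters $\sigma_i$ (since the MGF bound holds for all $\lambda \in \Reals$, replacing $\lambda$ by $-\lambda$ shows $\Exp{e^{\lambda(-(X_i - \mu_i))}} = \Exp{e^{-\lambda(X_i-\mu_i)}} \leq e^{\lambda^2\sigma_i^2/2}$); this gives $\Prob{-(X - \Exp{X}) \geq t} \leq \exp\left(-\frac{t^2}{2\sum_{i=1}^n \sigma_i^2}\right)$, which is the second inequality.

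\medskip

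I do not anticipate a genuine obstacle here: every ingredient (Markov's inequality, the sub-Gaussian MGF bound, independence-based factorization, and the elementary quadratic optimization) is either stated in the excerpt or completely routine. The only point requiring a word of care is that the sub-Gaussian parameter is symmetric — that the bound applies to $-X_i$ with the same $\sigma_i$ — which follows immediately because the defining inequality is quantified over all real $\lambda$, not just $\lambda \geq 0$. Thus the same computation handles both tails with no extra work.
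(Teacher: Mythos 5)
Your proof is correct and is exactly the standard Chernoff argument the paper has in mind: the paper states this lemma as a known fact without a written proof, but the surrounding appendix material (Markov's inequality applied to $e^{\lambda X}$, the sub-Gaussian MGF definition, and the optimization over $\lambda$) is precisely the machinery you assemble. Your handling of the lower tail via the symmetry of the sub-Gaussian MGF bound in $\lambda$ is also the right observation.
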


Oftentimes, Hoeffding's inequality is presented in the following form, which is specialized for bounded random variables.
\begin{lemma}[Hoeffding's inequality for bounded ranom variables]\label{lem:Hoeffding_bounded}
	Let $X_1, \ldots, X_n$ be $n$ independent random variables such that $X_i \in [a_i, b_i]$ almost surely for all $i$ 
	and let $X = \sum_{i=1}^n X_i$. Then for any $t > 0$,
	\[	\Prob{X - \Exp{X} \geq t} \leq \exp\left( - \frac{2t^2}{\sum_{i=1}^n (b_i - a_i)^2} \right).	\]
	The same upper bound holds for $\Prob{X - \Exp{X} \leq -t}$.
\end{lemma}

\subsubsection{Bounded Difference Condition}
Note that the inequalities in the previous section ensure concentration for the sum of independent random variables whose tail behavior 
is well-behaved. It is possible to obtain a similar concentration for a more general class of functions of independent random variables 
as long as the function does not depend on a single random variable too heavily. This is so-called the ``bounded difference'' condition. 
We formally state this result in the following lemma. 
\begin{lemma}[McDiarmid's inequality]\label{lem:McDiarmid}
	Let $X_1, \ldots, X_n$ be independent random variables such that for each $i \in [n]$, $X_i \in X$. Let $\xi: \prod_{i=1}^n X_i \to \Reals$ 
	be a function of $(X_1, \ldots, X_n)$ that satisfies for all $x_1, \ldots, x_n$, for all $i$, and for all $x_i'$,
	\begin{equation}\label{eqn:bounded_difference}
		\left| \xi\left(x_1, \ldots, x_i, \ldots, x_n\right) - \xi\left(x_1, \ldots, x_i', \ldots, x_n\right) \right| \leq c_i.
	\end{equation}
	Then for all $t > 0$,
	\[	\Prob{\xi - \Exp{\xi} \geq t} \leq \exp \left( \frac{-2 t^2}{\sum_{i=1}^n c_i^2} \right).	\]
\end{lemma}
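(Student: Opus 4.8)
The plan is to prove Lemma~\ref{lem:McDiarmid} by the classical \emph{method of bounded differences}: build the Doob martingale associated with $\xi(X_1,\dots,X_n)$, control each martingale increment through a conditional form of Hoeffding's lemma, and finish with the Chernoff bound. Concretely, for $k = 0, 1, \dots, n$ I would set $Z_k := \Exp{\xi(X_1,\dots,X_n) \mid X_1, \dots, X_k}$, so that $Z_0 = \Exp{\xi}$ and $Z_n = \xi$, and write $\xi - \Exp{\xi} = \sum_{k=1}^n D_k$ where $D_k := Z_k - Z_{k-1}$. Since $\Exp{D_k \mid X_1,\dots,X_{k-1}} = 0$, the sequence $(D_k)$ is a martingale difference sequence with respect to the filtration generated by the $X_i$.

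The next step, and the crux of the argument, is to show that conditionally on $X_1, \dots, X_{k-1}$ the increment $D_k$ lies in an interval $[L_k, U_k]$ whose endpoints are measurable functions of $X_1,\dots,X_{k-1}$ and whose width is at most $c_k$. This is exactly where the bounded-difference hypothesis \eqref{eqn:bounded_difference} and, crucially, the \emph{independence} of $X_1,\dots,X_n$ enter: one writes $Z_k = h_k(X_1,\dots,X_k)$ with $h_k(x_1,\dots,x_k) = \Exp{\xi(x_1,\dots,x_k, X_{k+1},\dots,X_n)}$, the expectation over the still-random future coordinates being legitimate by independence, and observes that replacing $x_k$ by $x_k'$ changes $h_k$ by at most $c_k$, because \eqref{eqn:bounded_difference} holds pointwise in the remaining coordinates. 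Hence $\sup_{x} h_k(X_1,\dots,X_{k-1},x) - \inf_{x} h_k(X_1,\dots,X_{k-1},x) \le c_k$, and subtracting the conditional mean $Z_{k-1}$ places $D_k$ in an interval of the required width.

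Given that, I would invoke the conditional Hoeffding lemma: if $Y$ has conditional mean zero given a $\sigma$-field $\mathcal{G}$ and lies in a (possibly $\mathcal{G}$-measurable) interval of length at most $c$, then $\Exp{e^{\lambda Y} \mid \mathcal{G}} \le e^{\lambda^2 c^2/8}$ for all real $\lambda$; this is proved by bounding the cumulant generating function of a bounded variable via a second-order Taylor expansion. Applying this with $Y = D_k$, $\mathcal{G} = \sigma(X_1,\dots,X_{k-1})$, $c = c_k$, and then peeling off the conditioning one index at a time via the tower property, I obtain $\Exp{e^{\lambda(\xi - \Exp{\xi})}} \le \exp\!\big(\tfrac{\lambda^2}{8}\sum_{k=1}^n c_k^2\big)$. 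Finally, Markov's inequality gives $\Prob{\xi - \Exp{\xi} \ge t} \le \exp\!\big(-\lambda t + \tfrac{\lambda^2}{8}\sum_k c_k^2\big)$ for every $\lambda > 0$, and choosing $\lambda = 4t/\sum_k c_k^2$ yields $\Prob{\xi - \Exp{\xi} \ge t} \le \exp\!\big(-2t^2/\sum_k c_k^2\big)$, as claimed.

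The routine parts here are the martingale setup, the Hoeffding lemma, and the Chernoff optimization. The step I expect to require the most care is the conditional bounded-range claim for $D_k$ in the second paragraph, specifically the interchange of conditional expectation with the pointwise bounded-difference estimate, which genuinely relies on independence of the coordinates and not merely on the bounded-difference property itself.
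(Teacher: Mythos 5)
Your argument is the standard and correct proof of McDiarmid's inequality: the Doob martingale $Z_k = \Exp{\xi \mid X_1,\dots,X_k}$, the conditional bounded-range claim for the increments (which, as you correctly emphasize, hinges on independence to express $Z_k$ as an expectation over the future coordinates), the conditional Hoeffding lemma, the tower property, and the Chernoff optimization at $\lambda = 4t/\sum_k c_k^2$. The paper itself states this lemma without proof as a classical reference result, so there is nothing to compare against; your derivation is sound and self-contained.
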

Note that one can obtain the same tail bound for the opposite direction by considering $-\xi$ in lieu of $\xi$.

\subsection{Some Known Results from Deconvolution Literature }\label{appx:deconvolution}
In this section, we introduce some known results for estimating the unknown density $f_X$ of random variable $X$ 
using deconvolution techniques. Suppose that $Z = X+N$ is a measurement of $X$ with additive noise $N$ and that 
we have $n$ i.i.d. observations $Z_1, \ldots, Z_n$. Fan reported that we can achieve an asymptotically consistent estimate 
for the density $f_X$ when the noise density $f_N$ is known and $f_X$ satisfies certain smoothness conditions \cite{Fan1991}. 
Later, Delaigle et al. showed that consistent estimation is possible even when the noise distribution is unknown, 
with aid of repeated measurements \citet{Delaigle2008}.

Their estimators and proof techniques rely on the kernel smoothing method (kernel deconvolution estimator). Here we only 
present the abridged version of the concepts, the estimator, and the results to the minimum amount we need. We would refer 
interested readers to relevant references for more details; for example, \citet{Carroll1988, Fan1991, Delaigle2008}. 
	
\subsubsection{Deconvolution Kernel Density Estimator}
Our goal is to recover distribution of random variable $X$, but we observe samples of $Z = X + N$ instead of $X$. We assume 
we know the distribution of $N$. Due to the independence between $X$ and $N$, we know that $\phi_Z(t) = \phi_X(t) \phi_N(t)$ 
for all $t \in \Reals$, where $\phi_Z, \phi_X, \phi_N$ denote the characteristic function of random variable $Z, X$ and $N$, respectively. 
	
Let $\cF$ denote Fourier transformation operator and $\cF^{-1}$ denote the inverse Fourier transformation operator. 
By applying these operators, we obtain the deconvolution estimate for $f_X(x)$, namely, $\hat{f}_X(x)$ as
\begin{equation}\label{eqn:kernel.est}
	\hat{f}_X(x) = \cF^{-1} \left\{ \frac{ \cF\{ \hat{f}_Z(x) \} (t)}{\phi_N(t)} \right\} 
		=  \frac{1}{hn} \sum_{i=1}^n L\Big(\frac{x - Z_i}{h}\Big),
\end{equation}
where we define
\begin{align*}	
	L & \equiv  \cF^{-1} \left\{ \frac{ \phi_K(\, \cdot \,) }{\phi_N(\, \cdot \, h^{-1})} \right\}, \quad \text{i.e.,} \quad
	L(z) = \frac{1}{2\pi} \int \exp(- \img\, t z ) \frac{\phi_K(t)}{\phi_N\left(\frac{t}{h}\right)} dt, ~~z \in \Reals.
\end{align*}

Indeed, this is known as deconvolution kernel density estimator in literature. We shall adopt prior results of Fan \citet{Fan1991} 
on its consistency to establish our results. We refer interested readers to the textbook by Wand and Jones \citet{WandJones94} 
for more details and properties of kernel density estimation.

\subsubsection{Usual Assumptions Made for Deconvolution}\label{sec:deconv_assumptions}
	\paragraph{Assumptions on the Signal Density, $f_X$} 
	Given constants $m,B \geq0$, and $\alpha \in [0,1)$, we define a class of densities following Fan \cite{Fan1991} as
	\begin{equation}\label{eqn:Fan_class}
		\cC_{m, \alpha, B} = \{ f_X(x): \left| f_X^{(m)}(x) - f_X^{(m)}(x + \delta) \right| \leq B \delta^{\alpha} \}.
	\end{equation}
	Intuitively, that implies that the signal density, $f_X$, is sufficiently ``smooth' (slowly varying with respect to $x$) so that 
	there is a hope to reconstruct it from a finite number of samples by interpolating the empirical density. 
	
	\paragraph{Assumptions on the Noise Density, $f_N$}\label{appx:noise}	
	Fan showed that the hardness of deconvolution depends on the smoothness of the noise distribution as well as the smoothness 
	of the signal density to be estimated \citet{Fan1991}. Here, the term `smoothness' means the order (the rate of decay) of 
	the characteristic function as $t \to \infty$. In short, deconvolution becomes more difficult as it is corrupted by 
	smoother\footnote{Smoother noise has faster decaying tail in the Fourier domain (characteristic function). Intuitively, 
	one may consider the smoother noise has heavier tail in the original domain, e.g., due to the uncertainty principle.} additive noise. 
	Following Fan, we call the distribution of a random variable $N$ smooth of order $\beta$ if its characteristic function $\phi_N$ satisfies
	\begin{equation}\label{eqn:ord_smooth}
		B^{-1} \left( 1 + |t| \right)^{-\beta} \leq \left| \phi_N (t) \right| \leq B \left( 1 + |t| \right)^{-\beta},
	\end{equation}
	for some positive constants $\beta, B > 0$, and for all real $t$ \citet{Fan1991}. 
	This class of densities is called ordinary-smooth and such densities have polynomially decaying tails in the Fourier domain. 
	Some examples of the ordinary-smooth error distributions include symmetric Gamma and double exponential distributions. 
	
	There is another interesting class of error distributions, whose tails decay much faster in the Fourier domain. 
	We will call the distribution of a random variable $N$ super-smooth of order $\beta$ if its characteristic function $\phi_N$ satisfies
	\begin{equation}\label{eqn:supersmooth}
		B^{-1} \exp \left( -\gamma |t|^{\beta} \right) \leq \left| \phi_N (t) \right| \leq B\exp \left( -\gamma |t|^{\beta} \right),
	\end{equation}
	for some positive constants $\beta, \gamma>0$ and $B > 1$, and for all real $t$. 
	Normal, mixture normal, Cauchy distributions belong to the super-smooth class. 

	\paragraph{Assumptions on the Kernel, $K$} 
	Typically, the kernel used in kernel deconvolution is assumed to satisfy the following four properties:
	\begin{enumerate}
		\item[(K1)] $\phi_K(t)$ is symmetric
		\item[(K2)] $\phi_K(t)$ has bounded integrable derivatives up to order $m+2$ on $\Reals$, where $m$ is 
			the signal parameter as in \eqref{eqn:Fan_class};
		\item[(K3)] $\phi_K(t) = 1 + \cO\left(|t|^m\right)$ as $t \to 0$;
		\item[(K4)] $\phi_K(t) = 0$, for $|t| > 1$.
	\end{enumerate}

\subsubsection{Some Known Results from Deconvolution Literature}\label{sec:deconv_results}
Here we summarize two theorems from Fan's seminal paper on deconvolution \cite{Fan1991}. The following theorems provide 
the convergence rate of the kernel deconvolution estimator as well as its consistency under the setup where the noise density is known. 
Specifically, the signal density is assumed to belong to Fan's $\cC_{m, \alpha, B}$ class for some $m,B \geq0$, and $\alpha \in [0,1)$ 
\eqref{eqn:kernel.est} and the noise density is assumed supersmooth \eqref{eqn:supersmooth}.
 
We use the subscript $n$ in $\hat{f}_n$ to emphasize that $\hat{f}_X$ is an estimator for $f_X$ based on $n$ samples. 
\begin{theorem}[\citet{Fan1991}, Theorem 1]\label{thm:Fan1}
	Suppose that the noise density is known and super-smooth as defined in \eqref{eqn:supersmooth}. 
	Given a kernel that satisfies (K1), (K2), (K3), (K4), it is possible to achieve
	\[	\sup_{f \in \cC_{m,\alpha,B}}\sup_{x \in \Reals} \Exp{\left( \hat{f}_n (x) - f(x)\right)^2} 
			= \cO \left( \left( \log n \right)^{-2(m+\alpha)/\beta} \right)	\]
	by the kernel deconvolution estimator with the choice of kernel bandwidth parameter 
	$h_n = \left(4\gamma\right)^{\frac{1}{\beta}}\left( \log n \right)^{-\frac{1}{\beta}}$.
\end{theorem}

The same paper has another theorem (which is presented as a corollary of Theorem \ref{thm:Fan1} in the original paper), which fits 
our purpose better. With $\hat{f}_n$, it is possible to define $\hat{F}_n$, an estimator of the CDF of $X$ by integrating $\hat{f}_n$: 
\begin{equation}\label{eqn:kernel_CDF}
	\hat{F}_n(x) = \int_{-M_n}^{x} \hat{f}_n(z) dz.
\end{equation}
$M_n$ is a sequence of constants, which tends to $-\infty$ as $n \to \infty$. The following theorem provides a convergence rate, 
which is better than na\"ively integrating that bound from Theorem \ref{thm:Fan1}.
	
\begin{theorem}[\citet{Fan1991}, Theorem 3]\label{thm:Fan2}
	Let the same assumptions hold as in Theorem \ref{thm:Fan1} except for that we require the kernel to satisfy (K2) and (K3) 
	with parameter $m+1$ instead of $m$. Then it is possible to achieve 
	\[	\sup_{f \in \cC'_{m,\alpha,B}}\sup_{x \in \Reals} \Exp{\left( \tilde{F}_n (x) - F(x)\right)^2} 
			= \cO \left( \left( \log n \right)^{-2(m+\alpha+1)/\beta} \right).		\]
	by the kernel deconvolution estimator with the same 
	choice of the bandwidth parameter $h_n = \left(4\gamma\right)^{\frac{1}{\beta}}\left( \log n \right)^{-\frac{1}{\beta}}$ 
	and $M_n = n^{\frac{1}{3}}$. Here, $\cC'_{m,\alpha,B}= \left\{ f \in \cC_{m,\alpha,B}: F(-n) \leq 
	D \left( \log n \right)^{-(m+2)/\beta} \right\}$.
\end{theorem}


\end{document}